\documentclass[10pt]{article}
\usepackage{amsmath}
\usepackage{amssymb}
\usepackage{amsthm}
\usepackage[mathcal]{euscript}
\usepackage[cmtip,all]{xy}
\newtheorem{theorem}{\bf Theorem}
\newtheorem{lemma}{\bf Lemma}
\newtheorem{proposition}{\bf Proposition}
\newtheorem{corollary}{\bf Corollary}
\newtheorem{definition}{\bf Definition}
\newtheorem{example}{\bf Example}
\newtheorem{remark}{\bf Remark}
\bibliographystyle{ams-alph}

\DeclareMathOperator{\id}{id}
\DeclareMathOperator{\Th}{Th}
\DeclareMathOperator{\Hom}{Hom}

\DeclareMathOperator{\coequalizer}{coequalizer}
\DeclareMathOperator{\holim}{holim}
\DeclareMathOperator{\Aut}{Aut}
\DeclareMathOperator{\Gal}{Gal}

\newcommand{\colim}{\operatorname{colim}}

\setlength{\oddsidemargin}{0in}
\setlength{\evensidemargin}{0in}
\setlength{\topmargin}{0in}
\setlength{\textheight}{21cm}
\setlength{\textwidth}{16cm}
\setlength{\parindent}{0mm}

\begin{document}
\title{On K(1)-local SU-bordism}
\author{by Holger Reeker}
\maketitle

\begin{abstract}
This paper is on an $E_{\infty}$ splitting of the bordism spectrum $M{\rm SU}$. Going the chromatic way, we make use of the resolution of the $K(1)$-local sphere in terms of $KO$-theory. The construction of an ${\rm SU}$-Artin-Schreier class results in an $E_{\infty}$ summand $T_{\zeta}$ in ${\rm SU}$-bordism. Working towards other free summands $TS^0$, we apply the theory of Bott's cannibalistic classes to compute some spherical classes. At the end an alternative calculation of Adams operations in terms of Mahler series in $p$-adic analysis is given. This paper is a shortened version of the author's thesis.
\end{abstract}
\setcounter{tocdepth}{2}
\tableofcontents

\section{Introduction and statement of results}
One of the highlights in algebraic topology was the invention of generalized homology and cohomology theories by Whitehead and Brown in the 1960s. Prominent examples are real and complex $K$-theories first given by Atiyah and Hirzebruch and bordism theories with respect to different structure groups first given by Thom. By Brown's representability theorem every generalized cohomology theory can be represented by a spectrum and these spectra are the center of interest in modern algebraic topology.\\[3mm]
Bordism theories with respect to some structure group $G$, e.g. $G=O$, $SO$, $U$, $SU$, $Sp$, and $Spin$ are defined as follows: Let $M$ be a smooth, closed, $n$-dimensional manifold and $G=\{G_n\}$ be a sequence of topological groups with maps $G_n\rightarrow G_{n+1}$ compatible with their orthogonal representations $G_n\rightarrow O(n)$.
\begin{definition} A $G$-structure on $M$ is a homotopy class of lifts $\tilde{\nu}$ of the classifying map of the stable normal bundle $\nu$
\[
\xymatrix@R-.3cm@C-.2cm@M+.1cm
{  &  BG \ar[d] \\
M \ar[r]^{\nu}\ar@{.>}[ur]^{\tilde{\nu}} & BO 
}\]
A manifold $M$ together with a $G$-structure is called a $G$-manifold.
\end{definition}
For each of the classical groups this gives us the $G$-bordism ring $\Omega_*^G$ and a Thom spectrum $MG$ with $\Omega_*^G=MG_*=MG_*(pt)=\pi_*MG$. Further we have a homology theory $MG_*(-)$ and a cohomology theory $MG^*(-)$. Since we have inclusion maps on group level and since the Thom construction is functorial we get the following tower:
\[
\xymatrix@R-.3cm@C-.2cm@M+.1cm
{MSU\ar[r]\ar[d]  &  MSpin \ar[d] \\
MU\ar[r] & MSO\ar[d]\\
   & MO
}\]
On the level of homotopy one knows at least rationally that the coefficient groups are polynomial rings and one asks for a decomposition on the level of spectra. In 1966 Andersen, Brown and Peterson gave an additive 2-local splitting of $MSpin$
\begin{equation*}
MSpin_{(2)} \simeq \bigvee_{n(J)\text{ even, }1\notin J} ko\langle 4n(J)\rangle  \vee \bigvee_{n(J)\text{ odd, }1\notin J} ko\langle 4n(J)+2\rangle  \vee \bigvee_{i\in I} \Sigma^{d_i} H\mathbb{Z}/2
\end{equation*}
with $J=(i_1,...,i_k)$ a finite sequence and $n(J)=i_1+...+i_k$.
Bordism theories are multiplicative homology theories and their Thom spectra are ring spectra. Moreover they admit even richer structures called $E_{\infty}$ structures, i.e.\ not only the coherent diagrams of commutativity and associativity commute up to homotopy but there are also diagrams of higher coherence. These $E_{\infty}$ structures should be taken into account and therefore we are interested in a splitting in the category of $E_{\infty}$ ring spectra.\\[3mm]
Unfortunately this access raises several other difficulties. Analysing the above additive splitting of 2-local spin bordism by Anderson, Brown and Peterson, the Eilenberg-MacLane part $H\mathbb{Z}/2$ turns out to be a difficult problem. In this situation the modern viewpoint is to apply chromatic homotopy theory and to look at the chromatic tower or at certain monochromatic layers. In our case we consider localizations with respect to the first Morava $K$-theory $K(1)$. At $p=2$ we have
$$ L_{K(1)} \cong L_{S\mathbb{Z}/2} L_{K_{(2)}} $$
and the Eilenberg-MacLane part disappears. This is our approximation to bordism theories. Algebraically this access offers a lot of extra structure since $\pi_0 E$ of a $K(1)$-local $E_{\infty}$ ring spectrum $E$ admits a $\theta$-algebra structure.\\[3mm]
In \cite{Habi} Laures gives a $K(1)$-local splitting of $E_{\infty}$ spectra
$$MSpin\cong T_{\zeta} \wedge \bigwedge_{i=1}^{\infty} TS^0$$
where $T$ is the free functor left adjoint to the forgetful functor from $E_{\infty}$ spectra to spectra and $\wedge$ is the coproduct in the category of $E_{\infty}$ spectra with $\bigwedge TS^0\cong T(\bigvee S^0)$. Such a splitting is also desireable for other bordism theories and a lot of different techniques are involved to get such a splitting.\\[3mm]
In this work we study $K(1)$-local $SU$ bordism. A main result is detecting an $E_{\infty}$ summand $T_{\zeta}$ for a nontrivial element $\zeta\in\pi_{-1} L_{K(1)} S^0\cong \mathbb{Z}_2$
\[
\xymatrix@R-.3cm@C-.2cm@M+.1cm
{ TS^{-1} \ar[d]^{T*}\ar[r]^{\zeta} & S^0 \ar[d]\ar@/^/[ddr] &\\
TD^0=S^0 \ar[r]\ar@/_/[drr] & T_{\zeta} \ar@{.>}[dr] & \\
 & & MSU
}\]
meaning that $T_{\zeta}$ is the resulting $E_{\infty}$ spectrum when attaching a $0$-cell along $\zeta$. To this end, we construct an Artin-Schreier class $b\in KO_0MSU$ satisfying $\psi^3 b=b+1$ which implies that $\zeta=0$ in $\pi_{-1} MSU$.\\[3mm]
Another important result is the construction of spherical classes in $K_*MSU$. Although we do not have a complete splitting, comparison with the spin bordism case shows that spherical classes play an important role: They correspond to free $E_{\infty}$ summands $TS^0$. In this work, we perform the construction of spherical classes via calculations of Adams operations on $K_*(\mathbb{CP}^{\infty}\times \mathbb{CP}^{\infty})$ whose module generators map to the algebra generators of $K_* BSU$. Later we can use Bott's theory of cannibalistic classes to lift the Adams operations to the level of Thom spectra.\\[3mm]
Since the $K$-homology of $\mathbb{CP}^{\infty}$ is isomorphic to the ring of numerical polynomials, we are able to provide an alternative calculation of the Adams operations on $K_*\mathbb{CP}^{\infty}$ using Mahler series expansion in $p$-adic analysis.

\subsubsection*{Acknowledgements}
First of all, I would like to thank my supervisor Prof.~Dr.\ Gerd Laures for introducing me to the field of $K(1)$-local $E_{\infty}$ spectra with all their inter\-esting arithmetic. I would like to express my profound respect to Prof.~Dr.\ Uwe Abresch for his spontaneous willingness to act as co-referee. At the same time, I want to say thank you to all members of the chair of topology at the Ruhr-Universit\"at Bochum for the good atmosphere and for numerous mathematical and non-mathematical discussions -- my special thanks go to Dr.~Markus Szymik, Dr.~Hanno von Bodecker, Jan M\"ollers, Norman Schumann and Sieglinde Fernholz. I appreciate the financial support from the DFG within the Graduiertenkolleg 1150 ``Homotopy and Cohomology".

\section{Some homotopical algebra}
\subsection{Generalized cohomology theories and spectra}
In this section we want to recall the basic notations of generalized cohomology theories and spectra as their representing objects. We will see the correspondence between them and have a look at their fundamental properties. The relevant homotopy category is the stable homotopy category.
\begin{definition}
A generalized cohomology theory $E$ consists of a sequence $\{E^n\}_{n\in\mathbb{Z}}$ of contravariant homotopy functors
$$ E^n: \mathbf{CWPairs}\rightarrow \mathbf{AbGroups} $$
together with natural transformations
$$ \delta: E^n(X)\rightarrow E^{n+1}(X,A)$$
satisfying the axioms
\begin{itemize}
\item {\bf Excision:} The projection $(X,A)\rightarrow X/A$ induces an isomorphism $$\tilde{E}^n(X/A)\rightarrow E^n(X,A)$$ for all pairs $(X,A)$.
\item {\bf Exactness:} The long sequence of abelian groups
$$ ...\rightarrow E^n(X,A)\rightarrow E^n(X)\rightarrow E^n(A) \stackrel{\delta}{\rightarrow} E^{n+1}(X,A)\rightarrow ...$$
is exact for all pairs $(X,A)$.
\item {\bf Strong additivity:} For every family of spaces $\{X_i\}_{i\in I}$ the natural map
$$ E^n(\coprod_{i\in I} X_i) \rightarrow \prod_{i\in I} E^n(X_i) $$
is an isomorphism.
\end{itemize}
\end{definition}

\begin{proposition}
Every generalized cohomology theory $E$ enjoys the following properties:
\begin{enumerate}
\item For a pointed topological space $X$ there is a natural direct sum splitting $$E^n(X)\cong \tilde{E}^n(X)\oplus E^n(*).$$
\item For a family $\{X_i\}_{i\in I}$ of pointed topological spaces the map of reduced cohomology groups
$$ \tilde{E}^n(\bigvee_{i\in I} X_i)\rightarrow \prod_{i\in I} \tilde{E}^n(X_i)$$
is an isomorphism.
\item For a pointed topological space $X$ we have natural isomorphisms
\[\xymatrix@R-.3cm@C-.2cm@M+.1cm{ 
\tilde{E}^n(X) \ar[rr]^{\cong}_{\delta} & & E^{n+1}(CX,X) & & \ar[ll]^{\text{excision}}_{\cong}  \tilde{E}^{n+1}(\Sigma X) }\]
\item {\bf Mayer-Vietoris:} For $X=X_1\cup X_2$ (open covering) we have the long exact sequence
$$ ...\rightarrow E^n(X) \stackrel{(i_1^*,i_2^*)}{\rightarrow} E^n(X_1)\oplus E^n(X_2) \stackrel{j_1^*-j_2^*}{\rightarrow} E^n(X_1\cap X_2) \stackrel{\delta}{\rightarrow} E^{n+1}(X)\rightarrow ...$$
\item {\bf Milnor sequence:} For a filtration $X=\colim X_i$ we get a short exact sequence with the derived limit
$$ 0\rightarrow \lim\,^{1} E^{n-1}(X_i)\rightarrow E^n(X) \rightarrow \lim E^n(X_i)\rightarrow 0 $$
which detects phantom maps. 
\end{enumerate}
\end{proposition}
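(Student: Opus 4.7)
The plan is to derive all five items from the three axioms, using the first two as tools for the later parts.

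First, for the splitting in (1), I would look at the long exact sequence of the pair $(X,*)$. Since the inclusion $*\hookrightarrow X$ admits the retraction $X\to *$, the map $E^n(X)\to E^n(*)$ is split surjective and the connecting homomorphisms vanish, so the long exact sequence breaks into split short exact sequences $0\to E^n(X,*)\to E^n(X)\to E^n(*)\to 0$. Taking $\tilde{E}^n(X):=E^n(X,*)$ gives the asserted direct sum decomposition. For (2), I would apply excision to the collapse $\coprod_i X_i\to(\coprod_i X_i)/(\coprod_i *)=\bigvee_i X_i$, so $\tilde{E}^n(\bigvee_i X_i)\cong E^n(\coprod_i X_i,\coprod_i *)$, and then invoke strong additivity on both $\coprod_i X_i$ and $\coprod_i *$ to convert the long exact sequence of the pair into one of products; the retractions $X_i\to *_i$ split this off, and the kernels assemble to $\prod_i\tilde{E}^n(X_i)$ by (1).

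For (3), I would use the long exact sequence of the pair $(CX,X)$. Since $CX$ is contractible, $\tilde{E}^n(CX)=0$ for all $n$, and exactness forces $\delta\colon\tilde{E}^n(X)\to E^{n+1}(CX,X)$ to be an isomorphism. Excision applied to the quotient $CX/X=\Sigma X$ then identifies $E^{n+1}(CX,X)\cong\tilde{E}^{n+1}(\Sigma X)$. For (4) Mayer--Vietoris, the idea is to splice the long exact sequences of the pairs $(X,X_1)$ and $(X_2,X_1\cap X_2)$ along the excision isomorphism $E^*(X,X_1)\cong E^*(X_2,X_1\cap X_2)$, which holds because $X=X_1\cup X_2$ is an open cover; a standard diagram chase then produces the Mayer--Vietoris long exact sequence.

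The main obstacle is (5), the Milnor $\lim^1$ sequence, since we must extract information about $E^*(\colim X_i)$ from $E^*$ of the individual stages. My plan is to replace $\colim X_i$ by the homotopy equivalent mapping telescope $T$, which fits into an explicit cofiber sequence
\[\bigvee_i X_i \xrightarrow{\,1-s\,}\bigvee_i X_i \longrightarrow T,\]
where $s$ is induced by the structure maps $X_i\to X_{i+1}$. Applying $\tilde{E}^n$ and using (2) to rewrite each wedge as $\prod_i\tilde{E}^n(X_i)$, the resulting long exact sequence has $\lim\tilde{E}^n(X_i)$ as the kernel of $1-s^*$ in degree $n$ and $\lim^1\tilde{E}^{n-1}(X_i)$ as the cokernel of $1-s^*$ in degree $n-1$, by the very definition of the derived limit, yielding the claimed short exact sequence. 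The delicate points will be producing the cofiber sequence inside the framework of CW pairs on which the axioms are stated and verifying that the canonical map $T\to\colim X_i$ induces an isomorphism on $E^*$; both rely on the cellular filtration of $X$ and on the homotopy invariance built into the axioms.
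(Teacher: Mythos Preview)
Your proposal is correct and follows the standard arguments for these facts. However, the paper does not actually prove this proposition: it is stated as a list of well-known properties and immediately followed by the sentence ``These cohomology functors are representable by a sequence of spaces\ldots'', with no proof given. So there is nothing to compare against; your write-up simply supplies what the paper omits. The only point worth tightening is in (5): you should be explicit that the filtration is by subcomplexes (or at least by cofibrations), so that the mapping telescope is weakly equivalent to the colimit and excision applies to the relevant quotients; otherwise the identification $E^*(T)\cong E^*(\colim X_i)$ is not automatic from the axioms as stated.
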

These cohomology functors are representable by a sequence of spaces and with the suspension isomorphism we naturally get the following definition:
\begin{definition} A spectrum $X$ is a sequence of pointed topological spaces $X_0,X_1,X_2,...$ together with structure maps
$$ \sigma_n: X_n\wedge S^1\rightarrow X_{n+1}$$
or the adjoint map $\tilde{\sigma}: X_n\rightarrow \Omega X_{n+1}$ respectively. If $\tilde{\sigma}$ is a weak equivalence $X$ is called an $\Omega$-spectrum.
\end{definition}
By Brown's representability theorem every generalized cohomology theory can be represented by an $\Omega$-spectrum. On the other hand every spectrum defines a cohomology (and homology) theory. It is worth mentioning that every spectrum can functorially be turned into an $\Omega$-spectrum. As an illustrative example, we define an $\Omega$-spectrum for complex K-theory.

\begin{example} ($K$-theory) First of all we make use of Bott periodicity, i.e.\ there is a homotopy equivalence $\Omega^2 BU\cong \mathbb{Z}\times BU$, and we define an $\Omega$-spectrum $K$ by setting
$$ K_n=\begin{cases} \mathbb{Z}\times BU & \text{if n is even,}\\ \Omega BU & \text{if n is odd} \end{cases}$$
with structure maps adjoint to 
$$ (\tilde{\sigma}: K_n\rightarrow \Omega K_{n+1})=\begin{cases} \text{the Bott equivalence } \quad \mathbb{Z}\times BU \stackrel{\cong}{\longrightarrow} \Omega^2BU & \text{if n is even},\\ \text{the identification }\, \qquad \Omega BU \stackrel{\cong}{\longrightarrow} \Omega(\mathbb{Z}\times BU) & \text{ if n is odd}.\end{cases}$$
This is called the complex topological $K$-theory spectrum. Its homotopy groups are
$$ \pi_n K = \begin{cases} \pi_0(\mathbb{Z}\times BU)\cong \mathbb{Z} & \text{if n is even}\\ \pi_1 BU=0 & \text{if n is odd.}\end{cases}$$
\end{example}
\begin{example}($KO$-theory) Similarly, we obtain the real topological $K$-theory spectrum $KO$ using real Bott periodicity, i.e.
$$ \Omega^8 BO\cong \mathbb{Z}\times BO.$$
Its homotopy groups are given in the following table:
\begin{center}
\begin{tabular}{c||l|l|l|l|l|l|l|l}
n mod 8 & 0     & 1 & 2 & 3 & 4 & 5 & 6 & 7 \\
\hline
$\pi_n KO$ & $\mathbb{Z}$ & $\mathbb{Z}/2$ & $\mathbb{Z}/2$ & 0 & $\mathbb{Z}$ & 0 & 0 & 0
\end{tabular}
\end{center}
\end{example}
In the above examples we recalled the additive homotopy groups, but as we know there is also a multiplicative structure. For example the coefficient rings of $K$ and $KO$ are $\pi_*K=\mathbb{Z}[u^{\pm 1}]$ with the Bott element $u\in\pi_2K$ as invertible element and
$$ \pi_* KO=\mathbb{Z}[\eta,\alpha,\beta^{\pm 1}]/(2\eta=0,\eta^3=0,\eta\alpha=0,\alpha^2=4\beta).$$
\begin{definition}
A cohomology theory $E^*$ is called multiplicative if it is equipped with a product
$$ \times : \tilde{E}^p(X)\otimes \tilde{E}^q(Y) \rightarrow \tilde{E}^{p+q}(X\wedge Y)$$
which is associative, graded commutative, unital and stable.
\end{definition}
A multiplicative theory is realized by a ring spectrum, i.e.\ a spectrum $E=(E_n)_n$ together with maps
$$ \mu_{mn}:E_m\wedge E_n\rightarrow E_{m+n} \quad\text{ and } \eta_n: S^n\rightarrow E_n$$
such that the following diagrams representing the properties {\em associativity, commutativity, unit} and {\em stability} commute up to homotopy:
\[
\xymatrix@R-.3cm@C-.2cm@M+.1cm
{E_k\wedge E_m\wedge E_n \ar[r]\ar[d]  &  E_{k+m}\wedge E_n \ar[d] & &   E_m\wedge E_n\ar[rr]\ar[dr] && E_n\wedge E_m\ar[dl]  \\
E_k\wedge E_{m+n}\ar[r] & E_{k+m+n}                                & &  &              E_{m+n} & \\
S^n\wedge E_n \ar[dr]\ar[r]  &  E_m\wedge E_n \ar[d]^{\mu} & E_m\wedge S^n \ar[l]\ar[dl]&  &  \Sigma S^n\ar[r]\ar[d] & \Sigma E_n\ar[d]  \\
                              &  E_{m+n}                    &                            &  &  S^{n+1}\ar[r]          & E_{n+1}   
}\]
With these notations the product
$$ \times : \tilde{E}^p(X)\otimes \tilde{E}^q(Y) \rightarrow \tilde{E}^{p+q}(X\wedge Y)$$
of the multiplicative cohomology theory $E^*$ is given by
$$ (f:X\rightarrow E_m\,,\, g:Y\rightarrow E_n)\mapsto (f\wedge g:X\wedge Y\rightarrow E_m\wedge E_n\stackrel{\mu_{mn}}{\longrightarrow} E_{m+n}).$$
Having the above notations for associativity and commutativity of ring spectra in mind, one might think of higher coherence conditions (i.e.\ having a smash product of four or more spaces we want to have commutativity up to homotopy when evaluating in different order). This leads to the notion of an $E_{\infty}$ ring spectrum, which comes with an $E_{\infty}$ operad controlling the coherence. This is the sort of extra structure all our spectra (bordism spectra, $K$-theory spectra, Eilenberg-MacLane spectra) have and in this world our bordism splitting takes place. Another (in fact Quillen equivalent) model for $E_{\infty}$ spectra are symmetric spectra which come up in the next paragraph.

\subsection{Symmetric spectra over topological spaces}
There are a lot of highly structured ring spectra: $E_{\infty}$ spectra, $\mathbb{S}$-algebras, symmetric ring spectra and strict commutative ring spectra. They are all Quillen equivalent and their homotopy category is the stable homotopy category. Therefore it does not matter which model we use, but it gives a good feeling to have safe foundations. In this section we consider sequential spectra over pointed topological spaces and refer to \cite{HSS00}, \cite{EKMM} and \cite{Schwede08}.
Let $\mathcal{T}_*$ denote the category of pointed topological spaces.
\begin{definition} A symmetric spectrum $X$ consists of
\begin{itemize}
\item a sequence $X_0,X_1,...\in\mathcal{T}_*$
\item structure maps $\sigma: X_n\wedge S^1\rightarrow X_{n+1}$
\item symmetric operations $\Sigma_n \curvearrowright X_n$ such that
\[
\xymatrix@R-.3cm@C-.2cm@M+.1cm 
{\sigma^p: X_n\wedge S^p \ar[rr]^{\sigma\wedge S^{p-1}} && X_{n+1}\wedge S^{p-1} \ar[rr]^(0.65){\sigma \wedge S^{p-2}}&& ...\ar[r]^{\sigma} & X_{n+p}}
\]
are $\Sigma_n \times \Sigma_p$-equivariant.
\end{itemize}
A map $f:X\rightarrow Y$ of symmetric spectra is a family of maps $f_n:X_n\rightarrow Y_n$ of $\Sigma_n$-equivariant maps such that
\[
\xymatrix
@R-.3cm@C-.2cm@M+.1cm 
{X_n\wedge S^1 \ar[d]_{f_n\wedge S^1}\ar[r]^{\sigma} & X_{n+1}\ar[d]^{f_{n+1}} \\
Y_n\wedge S^1 \ar[r]^{\sigma} & Y_{n+1}}
\]
commutes. This gives us the category of symmetric spectra $Sp^{\Sigma}$.
\end{definition}

\begin{example}[Suspension spectrum]
For a pointed topological space $X\in\mathcal{T}_*$ the suspension spectrum $\Sigma^{\infty}X$ is defined by $(\Sigma^{\infty}X)_n:=X\wedge S^n$ and the structure map is given by the identity morphism $$X\wedge S^n\wedge S^1\rightarrow X\wedge S^{n+1}. $$
\end{example}
\begin{example}[Sphere spectrum] The sphere spectrum $\mathbb{S}=(S^0,S^1,S^2,...)$ is the suspension spectrum for $K=S^0$.
\end{example}
\begin{example}[Eilenberg-MacLane spectrum $H\mathbb{Z}$] With $S^1=\Delta^1/\partial\Delta^1$ the n-sphere is given a simplicial structure by $S^n=S^1\wedge ... \wedge S^1$. Then let $(\mathbb{Z}\otimes S^n)_k$ be the free abelian group on the unpointed k-simplices of $S^n$. Define the Eilenberg-MacLane spectrum $H\mathbb{Z}$ by $(H\mathbb{Z})_n:=|\mathbb{Z}\otimes S^n|$ to be the realization of the simplicial abelian group. $H\mathbb{Z}_n$ is a $K(\mathbb{Z},n)$ since for every simplicial abelian group $\pi_n|A|=H_nA$ and here we have $$\pi_m|\mathbb{Z}\otimes S^n| =H_m \mathbb{Z}\otimes S^n=H_m C_.S^n=\begin{cases}\mathbb{Z} & \text{for} n=m\\ 0 & \text{for} n\neq m \end{cases}$$ with $C_.S^n$ the singular chain complex. The action $\Sigma_n\curvearrowright S^n=S^1 \wedge...\wedge S^1$ is given by permuting the factors and the structure maps are induced by $S^n\wedge S^1\rightarrow S^{n+1}$.
\end{example}
\begin{example}[Unoriented bordism spectrum $MO$]
The construction is given for bordism theory with respect to the orthogonal group but generalizes to other groups in the obvious way. Construct $EO(n):=|k\rightarrow O(n)^{k+1}|$ as the realization of the simplicial complex. The orthogonal group $O(n)$ acts on this space by multiplication on the right. This gives us the classifying space $$BO(n):=EO(n)/O(n).$$ 
Take the associated bundle $$\xi_n: EO(n)\times_{O(n)} \mathbb{R}^n \rightarrow EO(n)/O(n)=BO(n)$$
and define its Thom space 
$$Thom(\xi_n) = D_{\xi}/S_{\xi}=\frac{EO(n)\times_{O(n)} D^n}{EO(n)\times_{O(n)} S^{n-1}}\cong EO(n)_+ \wedge_{O(n)} (D^n/S^{n-1}). $$
Since $D^n/S^{n-1}$ is $O(n)$-equivariantly isomorphic to $\mathbb{R}\cup\{\infty\}$ and $O(n)$ acts on $\mathbb{R}\cup\{\infty\}$ preserving $\{\infty\}$ we have an $O(n)$-action on $S^n$. Defining the $n^{th}$ space of $MO$ to be $MO_n:=EO(n)_+\wedge_{O(n)} S^n$, the Thom space of $\xi_n$ gives a symmetric spectrum with symmetric operations $\Sigma_n\subset O(n)$ coming from coordinate permutations.
\end{example}

\begin{definition}[Symmetric sequences] A symmetric sequence consists of a sequence 
$$X_0,X_1,...\in\mathcal{T}_*$$ 
and symmetric operations $\Sigma_n\curvearrowright X_n$. A map $f:X\rightarrow Y$ is a family of $\Sigma_n$-equivariant maps $f_n:X_n\rightarrow Y_n$. This category is denoted by $\mathcal{T}_*^{\Sigma}$.
\end{definition}

For $X,Y\in\mathcal{T}_*^{\Sigma}$ we can define their tensor product $X\otimes Y$ by
$$ (X\otimes Y)_n:= \bigvee_{p+q=n} (\Sigma_n)_+ \wedge_{\Sigma_p\times \Sigma_q} X_p\wedge Y_q$$
with $\Sigma_p\times \Sigma_q$-diagonal operation: for $(g,h)\in\Sigma_p\times \Sigma_q\subset \Sigma_n$, $\alpha\in\Sigma_n$
let $(\alpha(g,h),x,y)\sim (\alpha,gx,hy)$ be equivalent. The so defined tensor product admits a unit $U=(S^0,*,*,...)$
$$(U\otimes X)_n=\bigvee_{p+q=n} (\Sigma_n)_+\wedge_{\Sigma_p\times\Sigma_q} U_p\wedge X_q\cong(\Sigma_n)_+\Sigma_n X_n\cong X_n,$$ hence $U\otimes X\cong X$. Furthermore the tensor product admits a twist isomorphism $\tau:X\otimes Y\rightarrow Y\otimes X$ sending $(\alpha,x,y)$ to $(\alpha^s,y,x)$ with 
$$ s(1,...,q,q+1,...,q+p)=(p+1,...,p+q,1,...,p)$$ 
the $(p,q)$-shuffle.

\begin{proposition}
$(\mathcal{T}_*^{\Sigma},\otimes,\tau)$ is a symmetric monoidal category.
\end{proposition}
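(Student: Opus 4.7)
The plan is to construct the associator $\alpha \colon (X \otimes Y) \otimes Z \to X \otimes (Y \otimes Z)$ and the right unitor $\rho \colon X \otimes U \to X$ (the left unitor $\lambda$ being already exhibited above), and then verify $\tau^2 = \id$ together with the pentagon, triangle, and two hexagon identities. The unifying observation I would record first is that for any parenthesisation of a $k$-fold tensor product there is a canonical isomorphism
\[ (X^1 \otimes \cdots \otimes X^k)_n \;\cong\; \bigvee_{p_1+\cdots+p_k=n} (\Sigma_n)_+ \wedge_{\Sigma_{p_1}\times\cdots\times\Sigma_{p_k}} X^1_{p_1} \wedge \cdots \wedge X^k_{p_k}. \]
This follows by iterated application of the binary formula, using associativity of $\wedge$ in $\mathcal{T}_*$ and the transitivity of induction along the tower $\Sigma_{p_1}\times\cdots\times\Sigma_{p_k}\hookrightarrow \Sigma_{p_1+\cdots+p_{k-1}}\times\Sigma_{p_k}\hookrightarrow\Sigma_n$. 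The associator, and hence the pentagon diagram at $k=4$, then becomes the canonical comparison of two such parenthesised presentations, so no essential work is required beyond this indexing.

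For the right unitor, only the summand $(p,q)=(n,0)$ in $X\otimes U$ contributes and yields $(\Sigma_n)_+\wedge_{\Sigma_n\times\Sigma_0}X_n\wedge S^0\cong X_n$; the triangle axiom amounts to collapsing the middle index to zero in the $k=3$ version of the formula above. To verify $\tau^2=\id$, observe that the $(p,q)$- and $(q,p)$-shuffles are mutually inverse elements of $\Sigma_{p+q}$, so the composite $\tau_{Y,X}\circ\tau_{X,Y}$ acts on each summand by $(\alpha,x,y)\mapsto(\alpha,x,y)$. The hexagon axiom reduces to the standard identity that the $(p,q+r)$-shuffle factors as the composition of a $(p,q)$-shuffle and a $(p,r)$-shuffle applied in adjacent positions, and its symmetric counterpart.

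I expect the main obstacle to be the hexagon bookkeeping: one has to match the action of induced permutations across three successive quotients by products of symmetric groups and verify that the two composite permutations of $\{1,\ldots,n\}$ agree. An alternative, more structural route I would mention is to identify $\otimes$ with Day convolution along the symmetric monoidal groupoid $\coprod_n B\Sigma_n$ under disjoint union; this construction automatically inherits symmetric monoidality from $(\mathcal{T}_*,\wedge)$ and from the symmetric monoidal structure of the indexing groupoid, bypassing the explicit coherence check altogether.
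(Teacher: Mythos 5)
The paper states this proposition without proof (it is quoted as a standard fact, with \cite{HSS00} among the background references), so there is no "paper route" to compare against; what you supply is the standard argument, and it is essentially correct. Your key organizing formula
\[ (X^1 \otimes \cdots \otimes X^k)_n \;\cong\; \bigvee_{p_1+\cdots+p_k=n} (\Sigma_n)_+ \wedge_{\Sigma_{p_1}\times\cdots\times\Sigma_{p_k}} X^1_{p_1} \wedge \cdots \wedge X^k_{p_k} \]
is right, and it does reduce the associator and the pentagon to canonical comparisons of parenthesised presentations, via transitivity of induction $( \Sigma_n )_+ \wedge_{\Sigma_{p+q}\times\Sigma_r}\bigl((\Sigma_{p+q})_+\wedge_{\Sigma_p\times\Sigma_q}-\bigr)\cong (\Sigma_n)_+\wedge_{\Sigma_p\times\Sigma_q\times\Sigma_r}-$. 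The unit check, $\tau^2=\id$ via the fact that the $(p,q)$- and $(q,p)$-shuffles are inverse, and the reduction of the hexagon to the block-permutation identity $\chi_{p,q+r}=(1_q\times\chi_{p,r})(\chi_{p,q}\times 1_r)$ in $\Sigma_{p+q+r}$ are all the correct ingredients. Two routine points you should make explicit rather than leave implicit: each structure map must be checked to be well defined on the induced (quotiented) smash products, i.e.\ compatible with the $\Sigma_p\times\Sigma_q$-relation $(\alpha(g,h),x,y)\sim(\alpha,gx,hy)$ -- for $\tau$ this uses $s(g,h)=(h,g)s$ -- and all structure maps must be checked natural in their arguments; both are immediate from the formulas but are part of the verification. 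Your closing remark is also the cleanest way to see the result: $\otimes$ is Day convolution of functors from the symmetric monoidal groupoid of finite sets and bijections to $(\mathcal{T}_*,\wedge)$, and Day convolution inherits a symmetric monoidal structure, which subsumes all the coherence bookkeeping; either route is a complete proof.
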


In the following we want to define a smash-product in the category of symmetric spectra $Sp^{\Sigma}$. Let $S:=(S^0,S^1,...)$ be the symmetric sphere sequence.
\begin{proposition}
$S$ is a commutative monoid in $\mathcal{T}_*^{\Sigma}$, i.e. there exist maps $\mu:S\otimes S\rightarrow S$ and $\eta:U\rightarrow S$ such that
\[
\xymatrix@R-.3cm@C-.2cm@M+.1cm 
{S\otimes S \ar[dr]_{\mu} \ar[rr]^{\tau} && S\otimes S\ar[dl]^{\mu} \\ & S &}
\]
commutes.
\end{proposition}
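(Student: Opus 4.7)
The plan is to construct $\mu$ and $\eta$ explicitly on the level of symmetric sequences, then verify the three monoid diagrams (associativity, unit, commutativity) degreewise. For the unit, I would just take $\eta_0 \colon U_0 = S^0 \to S^0 = S_0$ to be the identity, noting $U_n = *$ for $n \geq 1$ forces everything else. For the multiplication, since
\[
(S\otimes S)_n = \bigvee_{p+q=n} (\Sigma_n)_+ \sm_{\Sigma_p\times\Sigma_q} S^p\sm S^q,
\]
it suffices by the universal property of the induced wedge to specify, on each summand, a $\Sigma_p\times\Sigma_q$-equivariant map $S^p\sm S^q \to S^{p+q}$ and then extend $\Sigma_n$-equivariantly. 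Here I would use the canonical identification $S^p\sm S^q \cong S^{p+q}$ coming from associativity of $\sm$ applied to $S^1\sm\cdots\sm S^1$; this map is $\Sigma_p\times\Sigma_q$-equivariant because the $\Sigma_p\times\Sigma_q$-action on $S^{p+q}$ via the block inclusion $\Sigma_p\times\Sigma_q\hookrightarrow\Sigma_{p+q}$ permutes the first $p$ and last $q$ smash factors, which is exactly the diagonal action on $S^p\sm S^q$.

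Associativity is then essentially a relabeling: both composites $S\otimes S\otimes S \to S$ route through the canonical iso $S^p\sm S^q\sm S^r \cong S^{p+q+r}$, which is manifestly associative, and the bookkeeping of block inclusions $\Sigma_p\times\Sigma_q\times\Sigma_r \hookrightarrow \Sigma_{p+q+r}$ is the same on both sides. The unit diagram reduces to the fact that the only nonzero summand of $(U\otimes S)_n$ (resp.\ $(S\otimes U)_n$) is the one with $p=0$ (resp.\ $q=0$), on which $\mu$ is the canonical iso $S^0\sm S^n \cong S^n$, as already observed in the discussion of the unit of $\otimes$.

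The main point is the commutativity diagram, which is precisely what the $(p,q)$-shuffle in the definition of $\tau$ is engineered to encode. On a summand, $\tau$ sends $(\alpha,x,y)$ with $x\in S^p$, $y\in S^q$ to $(\alpha s,y,x)$ in the $(q,p)$-summand, where $s\in\Sigma_{p+q}$ is the $(p,q)$-shuffle. Applying $\mu$ after $\tau$ gives $\alpha s\cdot(y\sm x) \in S^{p+q}$, while applying $\mu$ directly gives $\alpha\cdot(x\sm y)$. So the commutativity diagram reduces to verifying the single identity
\[
s\cdot(y\sm x) = x\sm y \quad\text{in } S^{p+q}=S^1\sm\cdots\sm S^1,
\]
which holds because $s$ is the permutation that moves the last $p$ smash factors (carrying $x$) to the front and the first $q$ factors (carrying $y$) to the back. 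This step is the one to get right, as the whole reason for building the shuffle into $\tau$ in the previous paragraph is to make this identity hold on the nose; once it is checked the proposition follows. Since everything is natural in the summand indices $(p,q)$ and $\Sigma_n$-equivariant by construction, passing to the coequalizer defining $\otimes$ causes no difficulty.
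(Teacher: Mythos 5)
Your construction is correct and follows essentially the same route as the paper: the paper also obtains $\mu$ from the induction adjunction $\Hom_{\Sigma_n}((\Sigma_n)_+\wedge_{\Sigma_p\times\Sigma_q}S^p\wedge S^q,S^n)\cong\Hom_{\Sigma_p\times\Sigma_q}(S^p\wedge S^q,S^n)$, i.e.\ from the canonical $\Sigma_p\times\Sigma_q$-equivariant maps $S^p\wedge S^q\to S^{p+q}$, and leaves the diagram checks implicit. Your explicit verification that the $(p,q)$-shuffle satisfies $s\cdot(y\wedge x)=x\wedge y$ is exactly the point the paper's sketch omits, and it is carried out correctly.
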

To get an idea where this multiplication map $\mu$ comes from, recall that 
$$\Hom^*_G(\bigvee X_j, Y)\cong \prod \Hom_G^*(X_j, Y)$$ 
and for a subgroup $H\subset G$ we have 
$$\Hom_G^*(G_+\wedge_H X,Y)\cong \Hom_H^*(X,res_H^G Y).$$
Hence the map $\mu:S\otimes S\rightarrow S$ reduces to maps $$\mu_n:\bigvee_{p+q=n} (\Sigma_n)_+\wedge_{\Sigma_p\times\Sigma_q} S^p\wedge S^q\rightarrow S^n$$ which restrict to $\tilde{\mu}_n:S^p\wedge S^q\rightarrow S^n$ when considering the Young-subgroups $\Sigma_p\times\Sigma_q\subset\Sigma_n$.

\begin{definition}[Category of left $S$-modules]
A left $S$-module is a symmetric sequence $X\in\mathcal{T}_*^{\Sigma}$ with a map $S\otimes X\rightarrow X$ such that
\[
\xymatrix@R-.3cm@C-.2cm@M+.1cm 
{(S\otimes S)\otimes X \ar[dr]_{\mu\otimes id} \ar[rr] && S\otimes (S\otimes X)\ar[rr]^{id\otimes m} && S\otimes X\ar[dl]^m \\ & S\otimes X \ar[rr]^m && X &}
\]
commutes.
\end{definition}

This gives us an equivalence of categories:
\begin{center}
		\begin{tabular}{lcl}
			left S-modules $X\in\mathcal{T}_*^{\Sigma}$& $\longleftrightarrow$ & symmeric spectra $X\in Sp^{\Sigma}$\\
			$(S\otimes X)_n \stackrel{m_n}{\rightarrow} X_n$ & & $S^p\wedge X_q\rightarrow X_{p+q}$ \\
			 & &  $\Sigma_p\times\Sigma_q$-equivariant
		\end{tabular}
\end{center}
		
\begin{definition}[Smash-Product] Let $X,Y\in Sp^{\Sigma}\cong\text{\bf left S-mod}$. Then
\[\xymatrix@R-.3cm@C-.2cm@M+.1cm 
{X\wedge Y:= X\otimes_S Y:=\coequalizer(X\otimes S\otimes Y\ar@<.5ex>[r]^(0.8){1_X\otimes m_Y}\ar@<-.5ex>[r]_(0.8){(m_X\circ\tau)\otimes 1_Y} &X\otimes Y)} \]
is a left $S$-module since $(S\otimes-)$ preserves colimits.
\end{definition}

\begin{proposition}
$(Sp^{\Sigma},\wedge)$ is a symmetric monoidal category.
\end{proposition}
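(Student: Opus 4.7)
The plan is to deduce the symmetric monoidal structure on $(Sp^{\Sigma},\wedge)$ from the already-established symmetric monoidal structure on $(\mathcal{T}_*^{\Sigma},\otimes,\tau)$ together with the fact that $S$ is a commutative monoid there. This is a general categorical principle: modules over a commutative monoid in a (nice enough) symmetric monoidal category inherit a symmetric monoidal structure from the relative tensor product. I would run through the three pieces of data, using the equivalence with left $S$-modules throughout.

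First I would identify the unit. Since $(S\otimes X)_n\cong X_n$ in $\mathcal{T}_*^{\Sigma}$, the action map $m:S\otimes X\rightarrow X$ is already a split coequalizer of the parallel pair $1_S\otimes m_X$ and $(\mu\otimes 1_X)\circ\text{assoc}$, so $S\otimes_S X\cong X$ naturally; symmetrically $X\otimes_S S\cong X$. Next, associativity $(X\otimes_S Y)\otimes_S Z \cong X\otimes_S(Y\otimes_S Z)$ comes from associativity of $\otimes$ together with the fact that $(S\otimes-)$ (and more generally $(X\otimes -)$) preserves coequalizers in $\mathcal{T}_*^{\Sigma}$, which is what allows the two iterated coequalizers to be computed as a single coend and compared. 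Finally, for the symmetry I would construct $\tau_S\mathcolon X\otimes_S Y\rightarrow Y\otimes_S X$ by composing the twist $\tau\mathcolon X\otimes Y\rightarrow Y\otimes X$ from $\mathcal{T}_*^{\Sigma}$ with the quotient to $Y\otimes_S X$; the commutativity of $S$ (expressed by $\mu\circ\tau=\mu$) is exactly what is needed to show that $\tau$ descends to the coequalizer, because the two parallel maps defining $X\otimes_S Y$ get swapped by $\tau$ into those defining $Y\otimes_S X$.

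With unit, associator and symmetry in hand, the coherence axioms (pentagon, triangle, hexagon, $\tau^2=\id$) follow formally from the corresponding axioms in $(\mathcal{T}_*^{\Sigma},\otimes,\tau)$: each side of each axiom for $\otimes_S$ is obtained by quotienting the corresponding side for $\otimes$, and equality of two maps out of a coequalizer follows from equality upstairs. Under the equivalence with $Sp^\Sigma$, these structures translate into the familiar description: $(X\wedge Y)_n$ is the coequalizer of the two evident $\Sigma_n$-equivariant maps on $\bigvee_{p+q=n}(\Sigma_n)_+\wedge_{\Sigma_p\times\Sigma_q}X_p\wedge Y_q$, with $\mathbb{S}$ as unit and twist induced by shuffling factors.

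The main technical obstacle is the preservation of coequalizers by $(X\otimes-)$ in $\mathcal{T}_*^{\Sigma}$, without which neither associativity nor the descent of the twist would be well-posed. This reduces level-wise to the fact that in pointed spaces, smashing with a fixed space is a left adjoint and hence preserves all colimits; combined with the fact that colimits in $\mathcal{T}_*^{\Sigma}$ and in left $S$-modules are computed objectwise on the underlying symmetric sequences, this gives the required preservation, and all remaining verifications become routine diagram chases reducing to coherence in $(\mathcal{T}_*^{\Sigma},\otimes,\tau)$.
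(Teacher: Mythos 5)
Your proof is correct and is essentially the argument the paper intends: the paper states this proposition without writing out a proof (deferring to its references for this section), but everything it sets up beforehand --- the symmetric monoidal structure on $(\mathcal{T}_*^{\Sigma},\otimes,\tau)$, the commutative monoid $S$, the identification of symmetric spectra with left $S$-modules, and the coequalizer description of $\wedge$ together with the remark that $(S\otimes -)$ preserves colimits --- is precisely the scaffolding your descent-to-the-coequalizer argument (unit via the split coequalizer, associativity via colimit preservation, symmetry via commutativity of $S$, coherence inherited from $\otimes$) assembles into a proof. No gaps to report.
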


\subsection{Complex oriented theories and computational methods}
Complex oriented theories are special generalized cohomology theories which have a big advantage: They are computable. We briefly recall some basic results. Let $E$ be a multiplicative cohomology theory.
\begin{definition}
$E$ is called complex orientable if there is a class $x\in \tilde{E}^2\mathbb{CP}^{\infty}$ mapping to $1\in \tilde{E}^0S^0$ under the map
\[
\xymatrix@R-.3cm@C-.2cm@M+.1cm{
\tilde{E}^2\mathbb{CP}^{\infty}\ar[r] & \tilde{E}^2 \mathbb{CP}^1\cong \tilde{E}^2 S^2\ar[r]^(.65){\Sigma^{-2}} & \tilde{E}^0S^0 
 }\] 
induced by the inclusion $\mathbb{CP}^1 \hookrightarrow \mathbb{CP}^{\infty}$. Any choice of $x$ is a complex orientation of $E$.
\end{definition}
\begin{proposition}
The map $E^*[x]/(x^{n+1})\rightarrow E^*(\mathbb{CP}^n)$ mapping $x$ to $x_{|\mathbb{CP}^n}$ is an isomorphism. 
\end{proposition}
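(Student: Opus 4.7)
The plan is to proceed by induction on $n$ using the cofibre sequence
$$\mathbb{CP}^{n-1} \xrightarrow{i} \mathbb{CP}^n \xrightarrow{q} \mathbb{CP}^n/\mathbb{CP}^{n-1} \cong S^{2n}.$$
The case $n=0$ is trivial, and $n=1$ is essentially the orientation axiom: under $\mathbb{CP}^1 \simeq S^2$, the class $x|_{\mathbb{CP}^1}$ is by definition the double suspension of $1\in \tilde E^0S^0$, so $E^*(\mathbb{CP}^1) \cong E^* \oplus E^*\cdot x$ with $x^2=0$ for dimension reasons.

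For the inductive step, assume the result for $n-1$ and apply reduced $E$-cohomology to the cofibre sequence:
$$\cdots \to \tilde E^{k-1}(\mathbb{CP}^{n-1}) \to \tilde E^k(S^{2n}) \xrightarrow{q^*} \tilde E^k(\mathbb{CP}^n) \xrightarrow{i^*} \tilde E^k(\mathbb{CP}^{n-1}) \to \cdots.$$
By induction, $\tilde E^*(\mathbb{CP}^{n-1})$ is freely generated over $E^*$ by $x, x^2, \ldots, x^{n-1}$, and each of these is $i^*$ applied to the corresponding power in $\tilde E^*(\mathbb{CP}^n)$, where $x\in \tilde E^2(\mathbb{CP}^n)$ denotes the restriction of the global orientation. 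Hence $i^*$ is split surjective as a map of $E^*$-modules, the long exact sequence breaks into split short exact sequences, and
$$E^*(\mathbb{CP}^n) \cong E^*\langle 1, x, \ldots, x^{n-1}\rangle \,\oplus\, q^*\tilde E^*(S^{2n})$$
as $E^*$-modules.

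It remains to show that $x^n$ generates the $q^*\tilde E^*(S^{2n})$-summand. Since $i^*(x^n)=0$ by the inductive description, there is a unique $y\in \tilde E^{2n}(S^{2n})\cong E^0$ with $q^*(y)=x^n$, and I must show $y$ is a unit. My plan is to compare against the $n$-fold external product: the reduced diagonal $\Delta\colon \mathbb{CP}^n \to (\mathbb{CP}^n)^{\wedge n}$ satisfies $\Delta^*(x\wedge\cdots\wedge x)=x^n$, and one can construct a compatible map $\mathbb{CP}^n/\mathbb{CP}^{n-1}\to (\mathbb{CP}^1)^{\wedge n}$ from the iterated quotient collapse, giving a commutative square that relates $q$ to the $n$-fold smash of the first-stage collapses $\mathbb{CP}^n\to \mathbb{CP}^1$. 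Under $(\mathbb{CP}^1)^{\wedge n}\cong S^{2n}$ the external product $x\wedge\cdots\wedge x$ becomes, by the orientation axiom applied factor by factor, the class $1\in \tilde E^0(S^0)$. Chasing the two routes through the resulting diagram forces $y$ to be a unit, completing the induction.

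The main obstacle is precisely this final diagram chase: one must carefully produce the comparison map between $(\mathbb{CP}^1)^{\wedge n}$ and $\mathbb{CP}^n/\mathbb{CP}^{n-1}$ (equivalently, relate the reduced diagonal on $\mathbb{CP}^n$ to the $H$-space multiplication on $\mathbb{CP}^\infty$) so that the orientation axiom applies multiplicatively and not merely for a single factor. Everything else — the long exact sequence, the splitting, and the identification of basis elements — is formal once this compatibility has been verified.
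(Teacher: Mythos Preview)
Your inductive argument via the cofibre sequence $\mathbb{CP}^{n-1}\to\mathbb{CP}^n\to S^{2n}$ is a legitimate and standard route, but it is not the paper's. The paper argues in two strokes: first it uses the contractible open cover $U_0,\dots,U_n$ of $\mathbb{CP}^n$ to lift $x$ to relative classes and deduce $x^{n+1}=0$, and then it runs the Atiyah--Hirzebruch spectral sequence $H^p(\mathbb{CP}^n;E^q)\Rightarrow E^{p+q}(\mathbb{CP}^n)$, observing that $x$ is a permanent cycle hitting the generator $t\in E_2^{2,0}$, so by multiplicativity the spectral sequence collapses and yields the ring isomorphism directly. The AHSS approach has the advantage that the geometric input (the integral cohomology of $\mathbb{CP}^n$) is declared openly as the $E_2$-page, whereas your approach must recover that same input one cell at a time.

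Your sketch contains one confused phrase and one genuine obligation. There is no ``first-stage collapse $\mathbb{CP}^n\to\mathbb{CP}^1$''; the map goes the other way. What actually produces your comparison map is cellular approximation of the reduced diagonal $\bar\Delta:\mathbb{CP}^n\to(\mathbb{CP}^n)^{\wedge n}$: the $(2n-1)$-skeleton of the target is the basepoint (the bottom cell of the smash has dimension $2n$), so a cellular $\bar\Delta$ sends $\mathbb{CP}^{n-1}$ to the basepoint and factors as $q$ followed by a map $g:S^{2n}\to(\mathbb{CP}^1)^{\wedge n}\cong S^{2n}$. Your diagram chase then shows $y=g^*(1)$, and you are done \emph{provided} $g$ has degree $\pm 1$. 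That is exactly the content of $x_H^n\neq 0$ in $H^{2n}(\mathbb{CP}^n;\mathbb{Z})$, so you must either invoke the classical singular computation or argue geometrically (e.g.\ recognize $\mathbb{CP}^n$ as the Thom space of the tautological line bundle over $\mathbb{CP}^{n-1}$, so that multiplication by $x$ \emph{is} the Thom isomorphism and the induction closes immediately). Either fix is short, but without it the ``diagram chase forces $y$ to be a unit'' step does not close on its own.
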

\begin{proof}
The above map is well defined since one can cover $\mathbb{CP}^n$ with open contractible sets $U_0,...,U_n$ implying the existence of $x_i\in E^2(\mathbb{CP}^n,U_i)$ with $x_{i|\mathbb{CP}^n}=x_{|\mathbb{CP}^n}$. Multiplying all these
$$ x_0 \cdots x_n \in E^{2(n+1)}(\mathbb{CP}^n, \bigcup_{i=0}^n U_i)=0$$
shows that $x^{n+1}_{|\mathbb{CP}^n}=(x_0\cdots x_n)_{|\mathbb{CP}^n}=0$. Next we set up the Atiyah-Hirzebruch spectral sequence
$$ E_2^{p,q} = H^p(\mathbb{CP}^n, E^q(pt)) \Rightarrow E^{p+q} (\mathbb{CP}^n); $$
with $F^{p,q}=\ker(E^{p+q}X\rightarrow E^{p+q}X_{p-1})$ and $X=\mathbb{CP}^n$ we consider
$$ F^{2,0}=\ker(E^2\mathbb{CP}^n\rightarrow E^2(pt))\ni x_{|\mathbb{CP}^n} $$
and look at the associated graded $F^{2,0}/F^{3,-1}\cong E^{2,0}_{\infty}\subset E_2^{2,0}$. We have the map
$$ E_{\infty}^{2,0}=H^2(\mathbb{CP}^1,E^0(pt))=E^0(pt)\cdot t$$ given by $x_{|\mathbb{CP}^n}\mapsto t$. By multiplicativity the spectral sequence collapses and we have the following isomorphism of graded rings
$$ E^*(\mathbb{CP}^n)\cong E^*[x_{|\mathbb{CP}^n}]/(x^{n+1}_{|\mathbb{CP}^n}).$$
\end{proof}
\begin{corollary}
$E^*\mathbb{CP}^{\infty}\cong E^*[\![x]\!]$.
\end{corollary}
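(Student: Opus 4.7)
The plan is to pass from the finite projective spaces to the infinite one via the Milnor short exact sequence from the earlier proposition. Since $\mathbb{CP}^{\infty}=\colim_n \mathbb{CP}^n$ with respect to the standard cellular inclusions, that proposition yields a short exact sequence
\[
0\rightarrow {\lim}^{1}\, E^{*-1}(\mathbb{CP}^n)\rightarrow E^*(\mathbb{CP}^{\infty})\rightarrow \lim\, E^*(\mathbb{CP}^n)\rightarrow 0.
\]
So the task reduces to computing the inverse limit and showing that the derived limit vanishes.

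First I would identify the tower. By the preceding proposition, $E^*(\mathbb{CP}^n)\cong E^*[x]/(x^{n+1})$, and the inclusion $\mathbb{CP}^{n}\hookrightarrow\mathbb{CP}^{n+1}$ sends $x$ to $x$ (this is part of the compatibility built into the choice of a global orientation class $x\in \tilde E^2\mathbb{CP}^{\infty}$ restricting to $x_{|\mathbb{CP}^n}$). Consequently the bonding map
\[
E^*[x]/(x^{n+2})\longrightarrow E^*[x]/(x^{n+1})
\]
is the obvious truncation, which is surjective. The inverse limit of a tower of truncated polynomial rings under these truncation maps is, by definition, the formal power series ring $E^*[\![x]\!]$.

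Next I would handle the $\lim^{1}$ term. Because the bonding maps are surjective, the tower satisfies the Mittag-Leffler condition, and hence $\lim^{1} E^{*-1}(\mathbb{CP}^n)=0$. Plugging back into the Milnor sequence gives $E^*(\mathbb{CP}^{\infty})\cong E^*[\![x]\!]$, and one checks that the isomorphism is as graded rings with $x$ sent to the given orientation class. The main subtlety, rather than any deep obstacle, is simply making sure the compatibility $x_{|\mathbb{CP}^{n+1}}\mapsto x_{|\mathbb{CP}^n}$ is in force so that the tower really is the truncation tower; once that is noted, the Mittag-Leffler argument and the identification of the limit are immediate.
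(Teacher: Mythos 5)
Your argument is correct and is exactly the intended one: the paper derives this corollary from the computation $E^*(\mathbb{CP}^n)\cong E^*[x]/(x^{n+1})$ together with the Milnor sequence listed among the basic properties of cohomology theories, with the surjective truncation tower giving $\lim^1=0$ and $\lim E^*(\mathbb{CP}^n)\cong E^*[\![x]\!]$. Nothing essential differs from the paper's (implicit) proof.
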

\begin{corollary}
$E^*(\mathbb{CP}^{\infty}\times ... \times \mathbb{CP}^{\infty})\cong E^*[\![x_1,...,x_n]\!]$.
\end{corollary}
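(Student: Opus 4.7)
The plan is to reduce to the finite-dimensional case and then pass to the inverse limit, mimicking the argument that just established the previous corollary but one variable at a time.

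First I would treat the finite product $\mathbb{CP}^{n_1}\times\cdots\times\mathbb{CP}^{n_k}$. Since each factor has ordinary cohomology which is finitely generated and free over $\mathbb{Z}$, concentrated in even degrees, the same is true for the product by the classical Künneth theorem: $H^*(\mathbb{CP}^{n_1}\times\cdots\times\mathbb{CP}^{n_k};\mathbb{Z})\cong \mathbb{Z}[x_1,\ldots,x_k]/(x_1^{n_1+1},\ldots,x_k^{n_k+1})$ with all generators in degree $2$. Running the Atiyah--Hirzebruch spectral sequence
\[
E_2^{p,q}=H^p(\mathbb{CP}^{n_1}\times\cdots\times\mathbb{CP}^{n_k},E^q(\mathrm{pt}))\Rightarrow E^{p+q}(\mathbb{CP}^{n_1}\times\cdots\times\mathbb{CP}^{n_k}),
\]
the even concentration of the $E_2$-term forces all differentials to vanish, so the sequence collapses. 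To identify the multiplicative structure one picks, for each factor, the pullback $x_i\in E^2(\mathbb{CP}^{n_1}\times\cdots\times\mathbb{CP}^{n_k})$ of the orientation class along the projection to the $i$-th factor; by the preceding proposition each $x_i$ detects the corresponding polynomial generator in the associated graded, and multiplicativity of the spectral sequence (plus the vanishing argument $x_i^{n_i+1}=0$ exactly as in the single-variable proof) yields
\[
E^*(\mathbb{CP}^{n_1}\times\cdots\times\mathbb{CP}^{n_k}) \cong E^*[x_1,\ldots,x_k]/(x_1^{n_1+1},\ldots,x_k^{n_k+1}).
\]

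Next I would pass to the limit. The space $\mathbb{CP}^{\infty}\times\cdots\times\mathbb{CP}^{\infty}$ is the colimit, as each $n_i\to\infty$, of the skeleta $\mathbb{CP}^{n_1}\times\cdots\times\mathbb{CP}^{n_k}$. The Milnor sequence from the earlier list of properties gives
\[
0\rightarrow {\lim}^1\, E^{*-1}(\mathbb{CP}^{n_1}\times\cdots\times\mathbb{CP}^{n_k})\rightarrow E^*(\mathbb{CP}^{\infty}\times\cdots\times\mathbb{CP}^{\infty})\rightarrow \lim E^*(\mathbb{CP}^{n_1}\times\cdots\times\mathbb{CP}^{n_k})\rightarrow 0.
\]
The transition maps are the obvious projections $E^*[x_1,\ldots,x_k]/(x_1^{n_1+1},\ldots)\twoheadrightarrow E^*[x_1,\ldots,x_k]/(x_1^{m_1+1},\ldots)$ for $m_i\le n_i$, which are surjective, so the Mittag-Leffler condition holds and the $\lim^1$ term vanishes. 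The ordinary inverse limit of these truncated polynomial algebras is precisely the formal power series ring $E^*[\![x_1,\ldots,x_k]\!]$, and the identification is compatible with the previously chosen classes $x_i$.

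I expect the least routine step to be the justification of multiplicativity on the finite product, i.e.\ checking that the collapsed AHSS genuinely recovers a polynomial algebra rather than only its associated graded, and that no extension problems introduce unwanted relations. This is handled by exhibiting explicit lifts $x_i$ via the projections onto factors and applying the single-variable corollary factor by factor; once that is in place the $\lim^1$ vanishing is immediate from surjectivity of the truncation maps.
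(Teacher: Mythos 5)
Your overall route is the right one, and in fact it is just the paper's proof of the preceding proposition run with several variables: AHSS for the finite product, orientation classes pulled back along the projections, the covering argument for $x_i^{n_i+1}=0$, and then the Milnor sequence with $\lim^1$ killed by surjective transition maps (for which you should pass to the cofinal sequential system $n_1=\cdots=n_k=n$, since the Milnor sequence quoted in the paper is for a sequential filtration). The limit identification $\lim_n E^*[x_1,\dots,x_k]/(x_i^{n+1})\cong E^*[\![x_1,\dots,x_k]\!]$ and the treatment of the extension problem via the explicit lifts $x_i$ are fine.

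The one genuine flaw is the sentence ``the even concentration of the $E_2$-term forces all differentials to vanish.'' That inference is invalid: $E_2^{p,q}=H^p(X;E^q(\mathrm{pt}))$ vanishes for odd $p$, which only kills the differentials $d_r$ with $r$ odd, since $d_r$ shifts the filtration degree by $r$ and an even-to-even shift is not excluded; moreover $E^q(\mathrm{pt})$ need not be concentrated in even degrees, so there is no total-degree parity argument either. A concrete warning is $KO^*(\mathbb{CP}^n)$: the base has only even cells, yet $d_2$ (essentially $Sq^2$ composed with reduction) is nonzero -- the point being that collapse really uses the complex orientation, not evenness. The correct argument, which is exactly the ``by multiplicativity'' step in the paper's proof of the one-variable proposition and which you already have the ingredients for, is: the restrictions of the pulled-back orientation classes $x_i$ are permanent cycles, their images $t_i$ generate $E_2\cong H^*(X;\mathbb{Z})\otimes E^*(\mathrm{pt})$ as an $E^*(\mathrm{pt})$-algebra, the zero column consists of permanent cycles, and the differentials are derivations; hence all differentials vanish. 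With that sentence replaced, your argument goes through.
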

\begin{theorem}
Let $E$ be a complex oriented cohomology theory, $X$ a space and $\xi\rightarrow X$ a complex vector bundle. Then there exists a unique system of cohomology classes $c_i(\xi)\in\tilde{E}^{2i}(X)$ with the following properties:
\begin{itemize}
\item \textup{(}normalization\textup{)} For the tautological bundle $\lambda^*\rightarrow \mathbb{CP}^{\infty}$ we have $c_1(\lambda^*)=x$.
\item \textup{(}naturality\textup{)} For all maps $f:X\rightarrow Y$ we have $f^*c_i(\xi)=c_i(f^*\xi)$.
\item \textup{(}Cartan formula\textup{)} For the total Chern class $c=1+c_1+c_2+...$ we have $c(\xi\oplus\eta)=c(\xi)c(\eta)$.
\end{itemize}
\end{theorem}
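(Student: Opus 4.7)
The plan is to construct the Chern classes via the projective bundle theorem, using the orientation class $x$ to generate the $c_i$ in a fiberwise fashion, and then to verify the axioms using the splitting principle.

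First I would handle the line bundle case as the input data: any complex line bundle $L\to X$ is classified by a map $f_L\colon X\to\mathbb{CP}^\infty$, and I would define $c_1(L):=f_L^*x\in\tilde{E}^2(X)$. Normalization and naturality for line bundles are then tautological, and the $n=1$ Cartan formula is vacuous.

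The main step is to establish a projective bundle theorem: for a rank $n$ complex vector bundle $\xi\to X$, the projection $p\colon\mathbb{P}(\xi)\to X$ induces
\[
E^*(\mathbb{P}(\xi))\ \cong\ E^*(X)\otimes_{E^*}E^*[t]/(t^n),
\]
where $t=c_1$ of the tautological line bundle on $\mathbb{P}(\xi)$. The strategy is a Leray--Hirsch / Atiyah--Hirzebruch argument: the classes $1,t,\dots,t^{n-1}$ restrict to an $E^*$-basis of every fiber $\mathbb{P}^{n-1}$ by the previous proposition, so in the Atiyah--Hirzebruch spectral sequence for $\mathbb{P}(\xi)\to X$ they provide global lifts of the classes filling out the fiber, forcing degeneration and producing the claimed free module structure. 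For non-finite $X$ one passes to the skeletal filtration and uses the Milnor sequence from Proposition 1(5). Given this, $t^n$ must be an $E^*(X)$-combination of $1,t,\dots,t^{n-1}$, and I would \emph{define}
\[
c_i(\xi)\in\tilde{E}^{2i}(X)\quad\text{by}\quad t^n-c_1(\xi)\,t^{n-1}+c_2(\xi)\,t^{n-2}-\dots+(-1)^n c_n(\xi)=0,
\]
setting $c_i(\xi)=0$ for $i>n$. Naturality is inherited from the naturality of $\mathbb{P}(-)$ and of $t$.

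For the Cartan formula I would invoke the splitting principle: by iterating $\mathbb{P}(-)$ one constructs the complete flag bundle $q\colon F(\xi)\to X$ over which $q^*\xi$ splits as a sum $L_1\oplus\dots\oplus L_n$ of line bundles, and the projective bundle theorem (applied inductively) shows $q^*\colon E^*(X)\to E^*(F(\xi))$ is \emph{injective}. Applying the same to both $\xi$ and $\eta$ on a common refinement where $\xi\oplus\eta$ splits as $L_1\oplus\dots\oplus L_{n+m}$, it suffices to check that under this splitting $c(\xi)=\prod_{i\le n}(1+c_1(L_i))$ and similarly for $\eta$, and this reduces to the identity of symmetric polynomials $\prod_{i\le n+m}(1+c_1(L_i))=\prod_{i\le n}(1+c_1(L_i))\cdot\prod_{j>n}(1+c_1(L_j))$; the factorization of $c(\xi)$ in terms of the $c_1(L_i)$ follows directly from the defining relation for $t$ over $F(\xi)$.

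Uniqueness is then formal: any two systems satisfying the three axioms agree on line bundles by normalization and naturality, agree on sums of line bundles by Cartan, and therefore agree on arbitrary bundles after pulling back to a flag bundle, where injectivity of $q^*$ forces agreement downstairs. The main obstacle is the projective bundle theorem; its plausibility rests entirely on the fact that the AHSS for the fibration $\mathbb{P}(\xi)\to X$ has $E_2^{*,*}\cong H^*(X;E^*(\mathbb{P}^{n-1}))$ and the would-be infinite cycles $1,t,\dots,t^{n-1}$ admit explicit global lifts, so the obstruction is really bookkeeping — but one must handle non-finite $X$ with a $\lim^1$ argument to conclude.
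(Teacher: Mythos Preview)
The paper does not supply a proof of this theorem; it is stated as a standard background result and the text moves directly to the next lemma. So there is no ``paper's own proof'' to compare against.

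Your proposal is the classical Grothendieck construction and is correct in outline. A couple of small points: the isomorphism you write as $E^*(\mathbb{P}(\xi))\cong E^*(X)\otimes_{E^*}E^*[t]/(t^n)$ holds only as a free $E^*(X)$-module on $1,t,\dots,t^{n-1}$, not as a ring (indeed the ring relation is exactly what you then use to \emph{define} the $c_i$), so you should phrase the Leray--Hirsch conclusion additively. Also, to check that $c(\xi)=\prod_i(1+c_1(L_i))$ over the flag bundle one really uses the compatibility of the tautological class $t$ with the inclusion of a line sub-bundle, i.e.\ that the section $F(\xi)\to\mathbb{P}(q^*\xi)$ associated to $L_i$ pulls $t$ back to $c_1(L_i)$; this is routine but worth making explicit, since it is where the Grothendieck relation factors. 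With these clarifications the argument is complete.
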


\begin{lemma} Let $E$ be a complex oriented theory with a complex orientation $x\in E^2\mathbb{CP}^{\infty}$:
\begin{enumerate}
\item $E^*\mathbb{CP}^{\infty} \cong E^*[\![x]\!]$ the power series ring in $x$ over $E^*$.
\item $E^*(\mathbb{CP}^{\infty}\times \mathbb{CP}^{\infty})=E^*\mathbb{CP}^{\infty} \hat{\otimes}_{E^*} E^*\mathbb{CP}^{\infty}$.
\item $E_*\mathbb{CP}^{\infty}$ is a free $E^*$ module with generators $\beta_i\in E_{2i}\mathbb{CP}^{\infty}$, $i\ge 0$ dual to $x^i$, i.e. $\langle x^i,\beta_j\rangle = \delta_{ij}$.
\item $E_*(\mathbb{CP}^{\infty}\times \mathbb{CP}^{\infty})=E_*\mathbb{CP}^{\infty} \otimes_{E_*} E_*\mathbb{CP}^{\infty}$.
\item The diagonal $\mathbb{CP}^{\infty}\rightarrow \mathbb{CP}^{\infty}\times\mathbb{CP}^{\infty}$ induces a coproduct $\psi$ on $E_*\mathbb{CP}^{\infty}$ with $\psi(\beta_n)=\sum_{i+j=n} \beta_i\otimes \beta_j$.
\end{enumerate}
\end{lemma}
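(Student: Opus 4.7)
Parts (1) and (2) are immediate from the preceding corollaries: (1) is the corollary $E^*\mathbb{CP}^\infty \cong E^*[\![x]\!]$ verbatim, and (2) follows from the product corollary together with the standard identification $E^*[\![x_1,x_2]\!] \cong E^*[\![x_1]\!]\,\hat{\otimes}_{E^*}\, E^*[\![x_2]\!]$.

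For (3), the plan is to run the Atiyah-Hirzebruch spectral sequence in homology,
$$E^2_{p,q} = H_p(\mathbb{CP}^n, E_q(pt)) \Rightarrow E_{p+q}(\mathbb{CP}^n),$$
which has the same $E_2$-page as its cohomological counterpart (up to reindexing) and collapses for the same multiplicative reason. Thus $E_*\mathbb{CP}^n$ is a free $E_*$-module of rank $n+1$, concentrated in even degrees $0,2,\ldots,2n$, and the Kronecker pairing with $E^*\mathbb{CP}^n$ is perfect. I would then define $\beta_i\in E_{2i}\mathbb{CP}^n$ to be the unique dual-basis element, i.e.\ $\langle x^j,\beta_i\rangle=\delta_{ij}$. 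These elements are compatible under the inclusions $\mathbb{CP}^n\hookrightarrow\mathbb{CP}^{n+1}$ and pass to a free basis of $E_*\mathbb{CP}^\infty=\colim E_*\mathbb{CP}^n$, giving (3). Part (4) is then a collapsed Künneth statement: freeness of $E_*\mathbb{CP}^\infty$ over $E_*$ kills all Tor contributions, and no completion is needed on the homology side because we deal with a direct sum of copies of $E_*$ rather than a product.

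Finally (5) follows by dualizing the cohomological cup product. The diagonal $\Delta:\mathbb{CP}^\infty\to\mathbb{CP}^\infty\times\mathbb{CP}^\infty$ induces on cohomology the multiplication $x^i\otimes x^j\mapsto x^{i+j}$, so naturality of the Kronecker pairing yields
$$\langle x^i\otimes x^j,\, \Delta_*\beta_n\rangle=\langle \Delta^*(x^i\otimes x^j),\beta_n\rangle=\langle x^{i+j},\beta_n\rangle=\delta_{i+j,n},$$
and expanding $\Delta_*\beta_n$ in the tensor-product basis from (4) forces $\psi(\beta_n)=\sum_{i+j=n}\beta_i\otimes\beta_j$. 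The only delicate point in the plan is the colimit step in (3): on the homology side $E_*\mathbb{CP}^\infty$ is a genuine free module, while on the cohomology side $E^*\mathbb{CP}^\infty$ is a completed power series ring, so the global Kronecker pairing is not a perfect duality of modules. What saves the argument is that each individual $\beta_i$ pairs nontrivially with only the single monomial $x^i$; both the definition of the $\beta_i$ and the coproduct computation involve only finitely many such pairings at a time, so they are compatible with the $\colim$-to-$\lim$ passage and go through without completion issues.
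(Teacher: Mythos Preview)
The paper does not supply a proof of this lemma; it is stated as a standard fact immediately after the two corollaries computing $E^*\mathbb{CP}^\infty$ and $E^*(\mathbb{CP}^\infty\times\cdots\times\mathbb{CP}^\infty)$. Your argument is correct and is the standard one. One small point of phrasing: the homological Atiyah--Hirzebruch spectral sequence is not itself multiplicative, so ``collapses for the same multiplicative reason'' is slightly imprecise. The clean justification is that the homology AHSS is a module (via cap product) over the cohomology AHSS, so once the permanent cycles $1,x,\dots,x^n$ force the cohomological differentials to vanish, the homological differentials vanish as well. Alternatively, having already established that $E^*\mathbb{CP}^n$ is free of finite rank over $E^*$, one may bypass the homology AHSS entirely and invoke the perfect Kronecker pairing for finite complexes with free cohomology to obtain the dual basis $\beta_0,\dots,\beta_n$ directly. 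Your treatment of the colimit issue in (3) and of (4) and (5) is exactly right.
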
 

\subsection{Bousfield localization of spectra}
Bousfield localization theory is an analogue and a generalization of arithmetic localization theory in algebra. While arithmetic localization takes place for example in the category of rings and is done with respect to some prime ideals, Bousfield localization theory takes place in the stable homotopy category $\mathit{SHC}$ and can be done with respect to any generalized homology theory $E_*$ (represented by the spectrum $E$).
\begin{definition}
A map of spectra $f: X\rightarrow Y$ is called an $E$-equivalence if it induces an isomorphism $f_*:E_*X\stackrel{\cong}{\rightarrow} E_*Y$ in $E_*$-homology. With this a spectrum $Z$ is called $E$-local if it has the $E$-extension property for every $E$-equivalence $f:X\rightarrow Y$:
\[\xymatrix@R-.3cm@C-.2cm@M+.1cm{ 
X \ar[r]\ar[d]_f & Z \\
Y \ar@{.>}[ur]_{\exists !}  &
}\]
Equivalently $Z$ is $E$-local if the functor $[- ,Z ]$ takes $E$-equivalences to isomorphisms. 
\end{definition}
\begin{definition}
We call $\gamma_E(X): X\rightarrow X_E$ an $E$-localization if
\begin{enumerate}
\item $\gamma_E$ is an $E$-equivalence
\item $X_E$ is $E$-local.
\end{enumerate}
\end{definition}
\begin{remark}
This definition is equivalent to the definition given by Bousfield \cite{Bousfield79} and Ravenel \cite{Rav84}\textup{:} A spectrum $Y$ is $E$-local if for each $E$-acyclic spectrum $X$ \textup{(}i.e.\ $E_*X=0$\textup{)} follows\textup{:} $[X,Y]=0$. The reason is that a map $f:X\rightarrow Y$ gives the cofiber sequence
$$ X \stackrel{f}{\rightarrow} Y \rightarrow Z=cofiber(f) $$
and a long exact sequence
$$ ... \rightarrow [Z,T]\rightarrow [Y,T] \rightarrow [X,T] \rightarrow ...$$
i.e.\ having $[X,T]=0$ for an $E$-acyclic $X$ we get isomorphisms $[Z,T]\stackrel{\cong}{\rightarrow} [Y,T]$ and thus a lift $Z\rightarrow T$ \textup{(}and vice versa\textup{)}.
\end{remark}
It follows directly from the definition that $\gamma_E(X)$ (if it exists) is unique up to isomorphism. The existence is given via
\begin{theorem}[Bousfield] With the above notation we have
\begin{enumerate}
\item $\gamma_E(X)$ always exists
\item $\gamma_E(X)$ assembles an idempotent functor
$$ L_E: \mathit{SHC}\rightarrow \mathit{SHC}_{E\text{-local objects}}$$
\item $L_E$ is (up to equivalence) the categorical localization of $\mathit{SHC}$ with respect to $E$-equivalences, i.e. given a functor $\mathit{SHC}\rightarrow \mathit{D}$ such that $E$-equivalences are inverted we get the commutative diagram
\[\xymatrix@R-.3cm@C-.2cm@M+.1cm{ 
\mathit{SHC} \ar[rr]^{E\text{-equiv. inv.}} \ar[d] & & \mathit{D} \\
L_E \mathit{SHC} \ar@{.>}[urr]_{\exists !}  & &
}\]
\end{enumerate}
\end{theorem}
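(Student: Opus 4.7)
The plan is to follow Bousfield's original strategy: reduce the proper class of $E$-equivalences to a set via a cardinality bound, then perform a transfinite small-object construction. By the cofibre-sequence observation in the preceding remark, $Z$ is $E$-local iff $[A,Z]=0$ for every $E$-acyclic $A$ (i.e.\ $E_*A=0$). The crucial preparatory step is to exhibit a regular cardinal $\kappa$, depending only on the cardinality of $\pi_*E$, such that every $E$-acyclic spectrum is a filtered homotopy colimit of $\kappa$-small $E$-acyclic subspectra; a set $\mathcal{A}$ of representatives of the latter then suffices to detect $E$-locality.

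Given this, I would build $X_E$ from $X$ by transfinite induction. Set $X_0=X$; at successor stages form $X_{\alpha+1}$ as the cofibre of
\[ \bigvee_{A\in\mathcal{A}}\ \bigvee_{f\in[A,X_\alpha]} A \longrightarrow X_\alpha, \]
killing every map from a $\kappa$-small $E$-acyclic into $X_\alpha$; at limit ordinals take homotopy colimits. Stopping at an ordinal of cofinality greater than $\kappa$, the smallness property forces $[A,X_E]=0$ for all $A\in\mathcal{A}$, so $X_E$ is $E$-local. Each successor map is an $E$-equivalence because its cofibre is a wedge of $E$-acyclics, and $E_*$ commutes with filtered homotopy colimits, so $\gamma_E(X)\colon X\to X_E$ is an $E$-equivalence.

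For (2), uniqueness of $\gamma_E$ (immediate from the extension property) upgrades the construction to a functor: a map $f\colon X\to Y$ extends uniquely to $\tilde f\colon X_E\to Y_E$, and compatibility with identities and composition is forced by this uniqueness. Idempotence follows because $\gamma_E(X_E)$ is an $E$-equivalence between two $E$-local spectra, hence an isomorphism by applying the extension property in both directions. For (3), define $\bar F\colon L_E\mathit{SHC}\to\mathit{D}$ as the restriction of $F$ to the full subcategory of $E$-local spectra; since $\gamma_E$ is an $E$-equivalence and $F$ inverts such, $F(\gamma_E)$ is an iso and $\bar F\circ L_E\cong F$. Uniqueness of $\bar F$ up to natural isomorphism is then immediate from $\bar F(X_E)\cong F(X)$.

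The main obstacle is the cardinality bound establishing $\kappa$: everything after it is formal once a set of test acyclics is available. This is the technical heart of \cite{Bousfield79} and requires a careful cardinal analysis of $E_*$-acyclic CW subcomplexes; without it the transfinite construction would a priori require killing a proper class of maps at each stage and would not terminate.
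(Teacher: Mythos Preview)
Your sketch correctly outlines Bousfield's original argument from \cite{Bousfield79}: the cardinality bound producing a set $\mathcal{A}$ of small $E$-acyclics, the transfinite small-object construction, and the formal deductions of functoriality, idempotence, and the universal property. There is nothing to compare against here, since the paper states this theorem without proof, simply attributing it to Bousfield and citing \cite{Bousfield79}; your proposal is thus essentially a recapitulation of the proof in that reference rather than an alternative to anything in the present paper.
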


Collecting results from \cite{Bousfield79}, \cite{EKMM} and \cite{Habi} we formulate an overview theorem:
\begin{theorem} The localization functor $L_E$ has the following properties:
\begin{enumerate}
\item it is idempotent, i.e. $L_E L_E=L_E$
\item if $W\rightarrow X\rightarrow Y$ is a cofiber sequence, so is $L_E W\rightarrow L_E X\rightarrow L_E Y$
\item the homotopy inverse limit of $E$-local spectra is $E$-local
\item if $E$ is a ring spectrum and $X$ is a $E$-module spectrum then $X$ is $E$-local
\item the localization functor $L_E$ can be chosen to preserve $E_{\infty}$-structures
\item at $p=2$ we have $ L_{K(1)} = L_{S\mathbb{Z}/2} L_{K_{(2)}}$
\end{enumerate}
\end{theorem}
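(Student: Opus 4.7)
My plan is to establish each of the six items by reducing to the characterization of $E$-local spectra recalled in the preceding remark: $Y$ is $E$-local iff $[X,Y]=0$ for every $E$-acyclic $X$. With this dictionary in hand, items (1)--(4) become largely formal; the substantive content lies in (5), and (6) will need a separate arithmetic input.

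I would handle (1) by applying the extension property of the $E$-equivalence $\gamma_E\colon X\to L_E X$: when $X$ is already $E$-local, $\id_X$ serves as a universal receiver, and uniqueness forces $\gamma_E$ to be an isomorphism; applying this to $L_E X$ gives $L_E L_E\cong L_E$. For (2), I would form the cofibre $Z$ of $L_E W\to L_E X$; the induced map $Z\to L_E Y$ is an $E$-equivalence by the $E_*$-long exact sequence, and $Z$ is itself $E$-local because feeding an $E$-acyclic $A$ into the Puppe sequence squeezes $[A,Z]$ between two groups that vanish by $E$-locality of $L_E W, L_E X$ and their (de)suspensions. For (3), I would apply $[A,-]$ (with $A$ $E$-acyclic) to the Milnor sequence
$$ 0\to \lim{}^{1}[\Sigma A, Y_i]\to [A, \holim Y_i]\to \lim [A, Y_i]\to 0,$$
whose outer terms vanish because $Y_i$ and $\Sigma Y_i$ are each $E$-local.

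The module case (4) requires the one non-formal observation in the bunch: for an $E$-module $M$ with action $\mu\colon E\wedge M\to M$ and any $f\colon A\to M$, the unit axiom provides the factorisation
$$A\xrightarrow{\eta\wedge 1}E\wedge A\xrightarrow{1\wedge f}E\wedge M\xrightarrow{\mu}M;$$
when $A$ is $E$-acyclic, $E\wedge A\simeq *$ and the factorisation forces $f\simeq 0$. For (5) I would not attempt a reproof and simply invoke \cite{EKMM}: $L_E$ can be realised as fibrant replacement in a cofibrantly generated model structure whose weak equivalences are the $E$-equivalences, and this structure lifts to the category of algebras over any suitable $E_\infty$ operad, so the localisation preserves $E_\infty$-structures on the nose.

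Finally for (6) my plan is to combine the arithmetic fracture square at $p=2$ with Bousfield's explicit identifications: localisation at $S\mathbb{Z}/2$ agrees with $2$-adic completion, and Bousfield shows that at the prime $2$ one has $L_{K(1)}=L_{KU/2}$, which in turn factors as the $2$-adic completion of the $2$-locally $K$-local theory; assembling these gives $L_{K(1)}\simeq L_{S\mathbb{Z}/2}L_{K_{(2)}}$. The genuine obstacle in the entire package is item (5): everything else is either an exact-sequence chase or a citation of a single named theorem, but lifting $L_E$ to $E_\infty$-structures requires the full model-categorical machinery of symmetric spectra or $S$-modules from the previous subsection, and there is no shortcut using only the naive homotopy-category construction.
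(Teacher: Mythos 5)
Your proposal is correct in substance, but it is worth saying that it does something the paper deliberately does not: the paper states this as an ``overview theorem'' collecting results from Bousfield \cite{Bousfield79}, EKMM \cite{EKMM} and Laures \cite{Habi}, and offers no proof at all, whereas you supply the standard arguments. Your treatments of (1), (2) and (4) are exactly the classical Bousfield arguments (retraction from the extension property plus uniqueness; cofiber comparison via the $E_*$ five lemma together with locality of the cofiber of a map of local spectra; the unit factorization $A\to E\wedge A\to E\wedge M\to M$ killing maps out of $E$-acyclics), and citing \cite{EKMM} for (5) is precisely what the paper does. Two small caveats: in (3) your Milnor-sequence argument only handles sequential inverse limits, while the statement concerns general homotopy inverse limits -- the clean fix is to note that $E$-local spectra are closed under products and fibers, hence under all homotopy limits via the Bousfield--Kan construction (your $\lim^1$ argument is then the special case of a tower); and in (2) the natural comparison map runs $Y\to Z$ from the cofiber of the original sequence, which one then factors through $L_EY$ using locality of $Z$ -- the direction you wrote is obtained only after this step, though the $E_*$-equivalence and locality claims you invoke are the right ones. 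For (6) your sketch (Bousfield equivalence $\langle K(1)\rangle=\langle KU/2\rangle$, identification of $L_{S\mathbb{Z}/2}$ with $2$-completion, and the factorization through the $K_{(2)}$-local category, which implicitly uses that $K_{(2)}$-localization is smashing) is the correct route, and since the paper treats this item purely as a citation of \cite{Bousfield79} and \cite{Habi}, your level of detail is comparable; just be aware that the smashing input is doing real work there and deserves an explicit reference if you want a complete argument.
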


Next we introduce the so-called Bousfield classes which compare the localization functors.
\begin{definition}[Bousfield classes]
For spectra $E$ and $F$ we say $E\ge F$ if $f:X \stackrel{\sim_E}{\rightarrow} Y$ implies $f: X \stackrel{\sim_F}{\rightarrow} Y$. This defines an equivalence relation by
$$ E\sim F \quad \text{if } E\ge F \text{ and } F\ge E.$$
The equivalence class $\langle E\rangle $ is called the Bousfield class of $E$.
\end{definition}
Note that the relation $E\ge F$ gives a canonical factorization:
\[\xymatrix@R-.3cm@C-.2cm@M+.1cm{ 
                              & & L_E \mathit{SHC}  \ar@{.>}[d]^{\exists !} \\
 \mathit{SHC} \ar[rr]\ar[urr] & & L_F \mathit{SHC}    
}\]

\begin{example}[$p$-localization]
For the Moore spectrum $E=M\mathbb{Z}_{(p)}$ \textup{(}i.e.\ $E$ is connected $\pi_{<0}E=0$ and the only non-vanishing homology is in degree zero $H_0(E)\cong \mathbb{Z}_{(p)}$\textup{)} we have
$$ (M\mathbb{Z}_{(p)})_* X\cong \pi_*X \otimes \mathbb{Z}_{(p)},$$
and $M\mathbb{Z}_{(p)}$-localization is realized by
$$ X\mapsto X\wedge M\mathbb{Z}_{(p)}, $$
which we also call $p$-localization due to its effect in homotopy. 
\end{example}

\begin{example}[rationalization]
Similarly we have 
$$M\mathbb{Q}_*(X)=\pi_*(X)\otimes \mathbb{Q}$$
and rationalization is given by
$$ X\mapsto X\wedge M\mathbb{Q}.$$
In particular $L_{\mathbb{Q}} S=M\mathbb{Q}$.
\end{example}

\begin{definition}[smashing spectrum]
If $L_E X=X\wedge L_E S$, the spectrum $E$ is called smashing.
\end{definition}
We have seen that the above Moore spectra are smahing and will for reasons of notation denote the rationalization of a spectrum $X$ by $X_{\mathbb{Q}}$ and the $p$-localization by $X_{(p)}$. With these notations we have a local-global arithmetic square analogy.

\begin{proposition}[arithmetic square]
Let $X$ be a finite spectrum, then
\[\xymatrix@R-.3cm@C-.2cm@M+.1cm{ 
X \ar[rrr]^{\prod_p L_{\mathbb{Z}_{(p)}}}\ar[d]_{L_{\mathbb{Q}}}       & & & \prod_p X_{(p)}\ar[d]^{L_{\mathbb{Q}}}  \\
X_{\mathbb{Q}} \ar[rrr]_{L_{\mathbb{Q}}(\prod_p L_{\mathbb{Z}_{(p)}})} & & & (\prod_p X_{(p)})_{\mathbb{Q}}    
}\]
is a homotopy pullback square.
\end{proposition}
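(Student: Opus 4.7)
The plan is to form the homotopy pullback $P$ of the cospan $X_{\mathbb{Q}}\rightarrow (\prod_p X_{(p)})_{\mathbb{Q}}\leftarrow \prod_p X_{(p)}$ and show that the canonical comparison map $X\to P$ supplied by the universal property is a weak equivalence. This map exists because the two composites $X\to X_{\mathbb{Q}}\to (\prod_p X_{(p)})_{\mathbb{Q}}$ and $X\to \prod_p X_{(p)}\to (\prod_p X_{(p)})_{\mathbb{Q}}$ agree up to preferred homotopy, by naturality of the rationalization unit applied to $X\to\prod_p X_{(p)}$.

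To check that $X\to P$ is a weak equivalence I would pass to homotopy groups and use the Mayer--Vietoris long exact sequence associated to a homotopy pullback of spectra. Writing $A=\pi_n X$, the earlier discussion of smashing Moore-spectrum localizations identifies $\pi_n X_{\mathbb{Q}}$ with $A\otimes\mathbb{Q}$ and $\pi_n X_{(p)}$ with $A_{(p)}$; since homotopy groups commute with small products of spectra, one also has $\pi_n\prod_p X_{(p)}=\prod_p A_{(p)}$ and $\pi_n(\prod_p X_{(p)})_{\mathbb{Q}}=(\prod_p A_{(p)})\otimes\mathbb{Q}$. Because the summand $\prod_p A_{(p)}$ surjects onto its own rationalization, the Mayer--Vietoris sequence collapses into a short exact sequence exhibiting $\pi_n P$ as the kernel of the difference map
$$(A\otimes\mathbb{Q})\oplus\prod_p A_{(p)}\longrightarrow (\prod_p A_{(p)})\otimes\mathbb{Q},$$
i.e.\ as the ordinary pullback of abelian groups. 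It therefore suffices to verify the algebraic arithmetic square for each $A=\pi_n X$.

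This last step is purely algebraic. Using the structure theorem I would split $A$ into a free and a finite torsion part. For a finite torsion group $T$ both rationalizations vanish and $\prod_p T_{(p)}=T$, so the claim is immediate. For $A=\mathbb{Z}$ it reduces to the classical identity $\mathbb{Z}=\mathbb{Q}\cap\prod_p\mathbb{Z}_{(p)}$ viewed inside $(\prod_p\mathbb{Z}_{(p)})\otimes\mathbb{Q}$, i.e.\ that a rational number lying in $\mathbb{Z}_{(p)}$ for every prime $p$ must be an integer. Finite direct sums extend this to arbitrary $\mathbb{Z}^r$, and combining with the torsion case finishes the algebraic check.

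The main subtlety, and the reason for the finiteness assumption on $X$, lies precisely in this last step: without some finite-generation hypothesis on the homotopy groups the arithmetic square for abelian groups fails, since $\prod_p A_{(p)}$ generally behaves far worse than each individual $A_{(p)}$ and tensoring with $\mathbb{Q}$ does not suffice to detect the kernel of $A\to\prod_p A_{(p)}$. Finiteness of $X$ guarantees that each $\pi_n X$ is finitely generated, which is exactly the hypothesis that makes the Hasse-principle-type pullback identity hold and hence promotes the square to a homotopy pullback of spectra.
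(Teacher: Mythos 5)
The paper records this proposition without proof (it is quoted as part of the survey of Bousfield localization), so there is no argument of the paper to compare with; judged on its own, your overall strategy --- map $X$ to the homotopy pullback $P$ of the cospan, identify $\pi_nP$ via Mayer--Vietoris, and reduce to the algebraic arithmetic square for the finitely generated groups $\pi_nX$ --- is the standard and correct one, and your identifications of the corners ($\pi_nX_{\mathbb{Q}}=A\otimes\mathbb{Q}$, $\pi_nX_{(p)}=A_{(p)}$, homotopy commuting with products, rationalization computed by $-\otimes\mathbb{Q}$) as well as the final algebraic check ($\mathbb{Z}=\mathbb{Q}\cap\prod_p\mathbb{Z}_{(p)}$, primary decomposition for finite torsion) are fine.

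There is, however, one step that fails as stated: the assertion that $\prod_pA_{(p)}$ surjects onto its rationalization $(\prod_pA_{(p)})\otimes\mathbb{Q}$, which is what you use to kill the connecting maps in the Mayer--Vietoris sequence. This is already false for $A=\mathbb{Z}$: the element $(1,1,1,\dots)\otimes\tfrac12$ is not of the form $(y_p)_p\otimes1$, since torsion-freeness would force $y_p=\tfrac12\in\mathbb{Z}_{(p)}$ for every $p$, impossible at $p=2$ (equivalently, $\prod_p\mathbb{Z}_{(p)}$ is torsion-free but not divisible). What the collapse of the long exact sequence actually requires is surjectivity of the full difference map $(A\otimes\mathbb{Q})\oplus\prod_pA_{(p)}\to(\prod_pA_{(p)})\otimes\mathbb{Q}$ in each degree, and here the rational summand is essential. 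This is true for finitely generated $A$, but needs a small argument: reduce to $A=\mathbb{Z}$ (the torsion case has trivial target), write a general element as $(x_p)_p\otimes\tfrac1N$, and use partial fractions, i.e.\ the decomposition $\mathbb{Q}/\mathbb{Z}\cong\bigoplus_q\mathbb{Q}/\mathbb{Z}_{(q)}$, to choose a single $q_0\in\mathbb{Q}$ with $q_0\equiv x_p/N$ modulo $\mathbb{Z}_{(p)}$ at the finitely many primes $p$ dividing $N$ and $q_0\in\mathbb{Z}_{(p)}$ at all other primes; then $(x_p)_p\otimes\tfrac1N-(q_0,q_0,\dots)\otimes1$ lies in the image of $\prod_p\mathbb{Z}_{(p)}$. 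With this lemma inserted, the cokernel term in degree $n+1$ vanishes, $\pi_nP$ is the algebraic pullback, and the rest of your argument goes through; without it the proof is incomplete, since a nonzero cokernel would contribute extra homotopy to $P$ not seen by the kernel computation.
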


\subsubsection{$K$-theoretic localization of spectra}
Following section 8 of \cite{Rav84} we have
\begin{theorem}
Let $K$ and $KO$ be the spectra representing complex and real $K$-theory, respectively. Then $K\wedge X=pt$ if and only if $KO\wedge X=pt$, so $L_K$ and $L_{KO}$ represent the same functors and we have the same Bousfield classes $\langle K\rangle=\langle KO\rangle$.
\end{theorem}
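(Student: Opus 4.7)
The plan is to exploit Wood's cofiber sequence relating $KO$ and $K$:
\[
\Sigma KO \xrightarrow{\eta} KO \xrightarrow{c} K,
\]
where $\eta \in \pi_1 S^0$ is the Hopf map (acting on $KO$ via the unit) and $c$ is complexification. Granting this cofiber sequence, for any spectrum $X$ we get by smashing a cofiber sequence
\[
\Sigma KO \wedge X \xrightarrow{\eta} KO \wedge X \xrightarrow{c\wedge 1} K \wedge X,
\]
and the result will follow from an easy argument on the induced long exact sequence in homotopy groups together with the relation $\eta^3 = 0$ in $\pi_\ast KO$ recorded earlier in the excerpt.

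For the direction ($\Leftarrow$), suppose $KO \wedge X \simeq \mathrm{pt}$. Then both the fiber and total space in the cofiber sequence above are contractible, so $K \wedge X \simeq \mathrm{pt}$. For ($\Rightarrow$), suppose $K \wedge X \simeq \mathrm{pt}$. The long exact sequence
\[
\cdots \to KO_{\ast+1}(X) \xrightarrow{\eta} KO_\ast(X) \to K_\ast(X) \to \cdots
\]
collapses to give that multiplication by $\eta$ is an isomorphism on the graded group $KO_\ast(X)$. Iterating, multiplication by $\eta^3$ is also an isomorphism. But in $\pi_\ast KO = \mathbb{Z}[\eta,\alpha,\beta^{\pm 1}]/(2\eta,\eta^3,\eta\alpha,\alpha^2-4\beta)$ we have $\eta^3 = 0$, so $\eta^3$ acts as zero on $KO_\ast(X)$. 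An isomorphism that is simultaneously zero forces $KO_\ast(X) = 0$, i.e.\ $KO \wedge X \simeq \mathrm{pt}$.

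The two implications together say that $K$ and $KO$ have the same acyclic spectra, and by the equivalent characterization of locality through the acyclic-vanishing criterion recalled in the remark after the definition of $E$-localization, they have the same local spectra and therefore define the same localization functor: $L_K = L_{KO}$, so $\langle K \rangle = \langle KO \rangle$.

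The main obstacle is really only invoking Wood's cofiber sequence $\Sigma KO \xrightarrow{\eta} KO \xrightarrow{c} K$; once that is in hand, the argument reduces to the purely algebraic observation $\eta^3 = 0$. If one wanted to avoid citing Wood's theorem, one could instead use the Anderson–Wood splitting $K \simeq KO \wedge C\eta$ with $C\eta$ the cofiber of $\eta\colon S^1 \to S^0$, which gives the same cofiber sequence after smashing with $KO$.
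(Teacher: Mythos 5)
Your proof is correct and follows essentially the same route as the paper: the paper also derives the cofiber sequence $\Sigma KO\stackrel{\eta}{\rightarrow} KO\rightarrow KU$ (via Adams' identification $KU=KO\wedge \mathbb{CP}^2$, i.e.\ the Anderson--Wood splitting you mention as an alternative) and then uses nilpotence of $\eta$ acting on $KO_*X$ to conclude $KO_*X=0$ when $K\wedge X=pt$. The only cosmetic difference is that you invoke $\eta^3=0$ in $\pi_*KO$ while the paper cites $\eta^4=0$; both suffice for the same argument.
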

\begin{proof}
Consider $\mathbb{CP}^2= S^2\cup_{\eta} e^4$ with the Hopf map $\eta: S^3\rightarrow S^2$ being the attaching map of $e^4$. Let us denote by $S^0\cup_{\eta} e^2$ the suspension spectrum with $\mathbb{CP}^2$ being the second suspension. Due to Adams we have
$$KU=KO\wedge \mathbb{CP}^2.$$
Hence having the cofiber sequence
$$ S^3\stackrel{\eta}{\rightarrow} S^2\rightarrow \mathbb{CP}^2 $$
we get by smashing with $KO$
$$\Sigma^3 KO \stackrel{\Sigma^2 \eta^{st}}{\longrightarrow} \Sigma^2 KO\rightarrow KO\wedge \mathbb{CP}^2=\Sigma^2 KU$$
and get by applying $\Sigma^{-2}$ in the stable homotopy category the famous cofiber sequence
$$ \Sigma KO\stackrel{\eta}{\rightarrow} KO \rightarrow KU.$$
Looking at this sequence $KO\wedge X=pt$ implies $K\wedge X=pt$ and conversely if $K\wedge X=pt$ then $\eta$ induces an automorphism of $KO_*X$. But since $\eta$ is nilpotent ($\eta^4=0$) we have $KO_*X=0$. 
\end{proof}

\subsection{Algebraic manipulations of spectra}
When working with ring spectra algebraically one often looks at them as algebraic rings. And as the study of rings is often simplified by passage to its quotients and localizations one also wants to transfer these techniques to ring spectra. As in \cite{HS} we want to recall the construction of quotients and localizations. Suppose that $E$ is a ring spectrum and that $\pi_*E=R$ is commutative. Given $x\in R$, define the spectrum $E/(x)$ by the cofibration
$$ \Sigma^{|x|} E \stackrel{x\cdot}{\rightarrow} E \rightarrow E/(x). $$
If $x$ is a non-zero divisor then $\pi_* E/(x)$ is isomorphic to the ring $R/(x)$ and in "good" cases $E/(x)$ is a ring spectrum and the map $E\rightarrow E/(x)$ is a map of ring spectra. Such a "good" situation is given if one has a regular sequence $\{x_1,x_2,...,x_n,...\}\subset R$ in which one can iterate the above situation and form a ring spectrum $E/(x_1,x_2,...,x_n,...)$ with
$$ \pi_* E/(x_1,x_2,...,x_n,...) \cong R/(x_1,x_2,...,x_n,...)$$
such that the natural map
$$ E\rightarrow E/(x_1,x_2,...,x_n,...)$$
is a map of ring spectra.
Considering the case of localizations, suppose that $S\subset R$ is a closed subset. Since $S^{-1}R$ is a flat $R$-module, the functor
$$ S^{-1} R \otimes_R E_*(-)$$
is a homology theory denoted by $S^{-1}E$. In "good" cases it is represented by a ring spectrum, and the localization can be described by a map of ring spectra
$$ E\rightarrow S^{-1} E.$$
Now we want to apply the above constructions to the Brown-Peterson spectrum $BP$ (confer \cite{Rav84}). In this case everything is "good" and all the constructions can be made. Recall that 
$$ BP_* \cong \mathbb{Z}_{(p)}[v_1,...,v_n,...] \quad\text{ with }\quad |v_n|=2p^n-2.$$
For $0<n<\infty$ the Morava $K$-theory ring spectra $K(n)$ and the Johnson-Wilson ring spectra $E(n)$ are defined by the isomorphisms
\begin{eqnarray*}
K(n)_* &\cong & \mathbb{F}_p[v_n,v_n^{-1}]\\
E(n)_* &\cong & \mathbb{Z}_{(p)}[v_1,...,v_n,v_n^{-1}]
\end{eqnarray*}
with the understanding that they are constructed from $BP$ using a combination of the above methods.

\subsubsection{The chromatic framework}
In the following paragraph we want to describe the chromatic framework which gives us the motivation for $K(1)$-localization.
\begin{theorem}[chromatic convergence theorem]
Let $p$ be a fixed prime and denote localization with respect to the Johnson-Wilson theories $E(n)_*$ \textup{(}$n\ge 0$\textup{)} by $L_n$. Then there are natural maps $L_n X\rightarrow L_{n-1} X$ for all spectra $X$, and if $X$ is a $p$-local finite spectrum, then the natural map
$$ X \rightarrow \holim L_n X$$
is a weak equivalence.
\end{theorem}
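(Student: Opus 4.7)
The plan is to prove the theorem in two stages: first exhibit the natural maps $L_n X\to L_{n-1}X$, then establish the convergence $X\to \holim L_n X$ by a connectivity argument on the fibers of this tower.

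For the maps, the crucial input is the Bousfield class comparison $\langle E(n-1)\rangle \le \langle E(n)\rangle$. This can be read off from the Landweber exact functor theorem applied to the quotient $E(n)_*/(v_n)\otimes v_{n-1}^{-1}$, or more conceptually from Ravenel's identity $\langle E(n)\rangle = \bigvee_{i=0}^{n} \langle K(i)\rangle$. Consequently every $E(n)$-equivalence is an $E(n-1)$-equivalence, so by the universal property of Bousfield localization established earlier, the localization map $X\to L_{n-1}X$ factors uniquely through $L_n X$, yielding the natural transformation $L_n X\to L_{n-1} X$.

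For convergence I would first reduce to the sphere. Since $X$ is $p$-local finite, the Hopkins--Ravenel smash product theorem gives $L_n X\simeq X\wedge L_n S^0_{(p)}$, and finiteness of $X$ lets $X\wedge -$ commute with the homotopy inverse limit. It therefore suffices to prove that $S^0_{(p)}\to \holim_n L_n S^0_{(p)}$ is a weak equivalence, equivalently that the fiber $C_n=\mathrm{fib}(S^0_{(p)}\to L_n S^0_{(p)})$ satisfies $\mathrm{conn}(C_n)\to\infty$. Indeed, a connectivity tower whose terms become arbitrarily highly connected has contractible homotopy inverse limit, so $\holim C_n \simeq *$ and the map on the fibers of $X\to\holim L_n X$ is an equivalence.

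To control $\mathrm{conn}(C_n)$ I would pass to the monochromatic layers $M_n=\mathrm{fib}(L_n S^0_{(p)}\to L_{n-1} S^0_{(p)})$ and analyze them via the chromatic spectral sequence, whose $E_2$-term is the cohomology of the $BP$-based chromatic resolution. The Hopkins--Smith nilpotence theorem and the theory of $v_n$-self maps on type-$n$ finite spectra then force the nonzero homotopy of $M_n$ to sit in chromatic filtration growing with $n$, producing the required bound $\mathrm{conn}(M_n)\to\infty$; feeding this into the fiber sequence $M_n\to C_n\to C_{n-1}$ inductively bounds $\mathrm{conn}(C_n)$ as well.

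The hard part is clearly not the formal construction of the tower but the two deep inputs in the convergence argument: the smash product theorem and the connectivity estimate on the monochromatic layers. Both rely on the full strength of the Hopkins--Ravenel--Smith nilpotence technology, and in the writeup I would invoke them as black boxes rather than attempt to reprove anything of substance there.
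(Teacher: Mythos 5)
The paper does not actually prove this statement: it is the Hopkins--Ravenel chromatic convergence theorem, quoted as background, so your proposal must be measured against the standard proof (Ravenel, \emph{Nilpotence and periodicity}, Ch.~8). Your first half is correct and is the standard argument: $\langle E(n-1)\rangle\le\langle E(n)\rangle$ (for instance via $\langle E(n)\rangle=\langle K(0)\vee\cdots\vee K(n)\rangle$, which the paper also cites from Ravenel) makes every $E(n)$-equivalence an $E(n-1)$-equivalence, and the universal property of localization produces the natural tower. The reduction to the sphere is also fine, though for finite $X$ you do not need the smash product theorem there: $L_nX\simeq X\wedge L_nS^0$ already follows from Spanier--Whitehead duality, since $X\wedge Z\simeq F(DX,Z)$ is $E(n)$-local whenever $Z$ is, and dualizability lets $X\wedge-$ pass through the homotopy limit.

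The genuine gap is the convergence step. The assertion $\mathrm{conn}(C_n)\to\infty$ (and likewise $\mathrm{conn}(M_n)\to\infty$) is false: $L_nS^0_{(p)}$ has nontrivial homotopy in negative degrees, and these degrees move further down, not up, as $n$ grows. Already $\pi_{-2}L_1S^0_{(p)}\cong\mathbb{Q}/\mathbb{Z}_{(p)}$ at odd primes (similarly at $p=2$), so $\pi_{-3}C_1\neq0$; in general the $K(n)$-local and monochromatic spheres have nonzero homotopy spread over a range of negative degrees of width on the order of $n^2+n$ (cohomological dimension of the Morava stabilizer group, Gross--Hopkins duality), so neither $C_n$ nor $M_n$ becomes highly connected. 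Your phrase ``chromatic filtration growing with $n$'' conflates chromatic filtration with connectivity: elements of arbitrarily high chromatic filtration sit in low, indeed negative, topological degree, so the nilpotence/$v_n$-self-map technology cannot yield the connectivity bound you want, and the step ``highly connected terms imply $\operatorname{holim} C_n\simeq\ast$'' has nothing to apply to. What Hopkins and Ravenel actually prove is weaker but sufficient: for each fixed $k$ the inverse system $\{\pi_kC_nX\}_n$ is pro-trivial, hence $\varprojlim$ and ${\varprojlim}^1$ vanish in every degree and $\operatorname{holim}_nC_nX\simeq\ast$. The inputs are not a connectivity bound on $C_nX$ itself but on its $BP$-homology -- Ravenel's computation of $L_nBP$ shows the tower $\{BP\wedge C_nX\}=\{X\wedge C_nBP\}$ is pro-trivial in each degree -- transferred to homotopy by a uniform Adams--Novikov vanishing-line argument available because $X$ is finite; this is where the smash product theorem genuinely enters. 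So the black boxes you cite are the right ones, but the bridge you build between them (a homotopy-connectivity estimate on $C_n$ or $M_n$) is not available, and the argument has to be recast in the pro-triviality form.
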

Recall from of \cite[p.\ 361]{Rav84} that the spectra $E(n)$ and $K(0)\vee K(1)\vee ... \vee K(n)$ have the same Bousfield classes. As said above there are natural transformations $L_n\rightarrow L_{n-1}$ and compatible transformations $1\rightarrow L_n$ giving the so-called {\em chromatic tower}
\[\xymatrix@R-.3cm@C-.2cm@M+.1cm{
                      &   & \vdots\ar[d]\\
                      &   & L_2 S\ar[d] \\
                      &   & L_1 S\ar[d]\\
 S\ar[rr]\ar[urr]\ar[uurr] \ar[uuurr] & & L_0 S
}\]
Looking at the tower we are interested in the difference between $L_n$ and $L_{n-1}$. On the one hand the fiber $M_n$ of the transformation $L_n\rightarrow L_{n-1}$ is known as the monochromatic layer, and on the other hand the difference of $L_n$ and $L_{n-1}$ is measured by the functor $L_{K(n)}$, which is localization with respect to the $n^{th}$ Morava $K$-theory. From \cite{HG94} we cite that there are natural equivalences
$$ L_{K(n)} M_n F \cong L_{K(n)} F\quad\text{ and } \quad M_nL_{K(n)}F\cong M_n F,$$
so the homotopy types of $L_{K(n)}F$ and $M_n F$ determine each other.
\begin{theorem}[arithmetic square]
Let $K(n)_*$ denote the $n^{th}$ Morava $K$-theory. There is a natural commutative diagram
\[\xymatrix@R-.3cm@C-.2cm@M+.1cm{
L_n X\ar[d]\ar[r] & L_{K(n)} X\ar[d]\\
L_{n-1} X\ar[r]   & L_{n-1} L_{K(n)} X
}\]
which for any spectrum $X$ is a homotopy pullback square. 
\end{theorem}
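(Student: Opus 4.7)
The plan is to construct the square from natural transformations already in play, and then verify the pullback property by comparing the fibers of the two vertical maps.

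First I set up the maps. Since $\langle E(n)\rangle = \langle K(0)\vee\cdots\vee K(n)\rangle \geq \langle K(n)\rangle$, every $E(n)$-equivalence is a $K(n)$-equivalence, so $K(n)$-local spectra are $E(n)$-local and we obtain a canonical natural transformation $L_n\to L_{K(n)}$. Combining this with the chromatic transformation $L_n\to L_{n-1}$ and applying $L_{n-1}$ to the map $X\to L_{K(n)}X$ yields the four arrows of the square, which commutes by naturality (both composites $L_n X\to L_{n-1}L_{K(n)}X$ agree with the image of $X\to L_{K(n)}X$ under the composite functor $L_{n-1}\circ L_{K(n)}$, using $L_{n-1}L_n\simeq L_{n-1}$).

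Next I reduce the pullback claim to a statement about fibers. A square of spectra is a homotopy pullback iff the induced map on fibers of the two vertical arrows is an equivalence. By definition of the monochromatic layer, the fiber of the left column $L_n X\to L_{n-1}X$ is $M_n X$, while the fiber of the right column $L_{K(n)}X\to L_{n-1}L_{K(n)}X$ is $M_n L_{K(n)}X$. The horizontal map $L_n X\to L_{K(n)}X$ is compatible with the $L_{n-1}$-coaugmentations, hence induces a map of fibers
$$M_n X \longrightarrow M_n L_{K(n)} X,$$
and in fact this is precisely the map obtained by applying the functor $M_n$ to the canonical map $X\to L_{K(n)}X$ (using $M_n L_n = M_n$, which follows from $L_n L_n = L_n$ and $L_{n-1}L_n = L_{n-1}$).

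Finally I invoke the Hopkins–Gross fact cited just above the theorem, namely the natural equivalence $M_n L_{K(n)} X \simeq M_n X$. The content of that result is exactly that the natural map displayed above is an equivalence, so the fibers of the two vertical arrows agree and the square is a homotopy pullback.

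The only subtle point is the last step, i.e.\ identifying the map of fibers produced by the commutative square with the natural equivalence of Hopkins–Gross rather than some other self-map of $M_n X$. This is a naturality bookkeeping issue: one checks that both maps are induced by the unit $X\to L_{K(n)}X$ after applying the functor $M_n$, using idempotence of $L_n$ and the identity $L_{n-1}L_n=L_{n-1}$ to rewrite $M_n L_n X = M_n X$. Once this identification is in hand, the theorem is immediate.
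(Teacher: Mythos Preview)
The paper does not actually prove this theorem; it is stated without proof as background material in the chromatic framework subsection, with the surrounding facts attributed to Ravenel \cite{Rav84} and Hopkins--Gross \cite{HG94}. So there is no in-paper argument to compare yours against.

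Your argument is correct and is in fact the standard way to deduce the fracture square once one has the Hopkins--Gross equivalence $M_n L_{K(n)} \simeq M_n$ available, which the paper cites from \cite{HG94} immediately before this theorem. One small cleanup: when you identify the fiber of the right-hand column as $M_n L_{K(n)} X$, the fact you are really using is $L_n L_{K(n)} X \simeq L_{K(n)} X$ (i.e.\ $K(n)$-local spectra are already $E(n)$-local), not the identity $M_n L_n = M_n$ that you invoke later. You asserted this implication at the outset, so the logic is fine, but the bookkeeping paragraph at the end attributes the step to the wrong identity. A second caveat worth flagging: in some treatments the equivalences $M_n L_{K(n)} \simeq M_n$ and $L_{K(n)} M_n \simeq L_{K(n)}$ are themselves \emph{derived from} the fracture square rather than proved independently, so in a different logical ordering your argument would be circular. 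Here the paper explicitly treats those equivalences as prior input from \cite{HG94}, so your use of them is legitimate in context.
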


Informally one might say that, having a $p$-local spectrum $X$, the basic building blocks for the homotopy type of $X$ are the Morava $K$-theory localizations $L_{K(n)} X$. Comparing the stable homotopy category with the integers in arithmetic, it is the localization functors $L_{K(n)}$ which take over the role of the primes. 

\subsection{A resolution of the $K(1)$-local sphere}
In this section we are going to introduce a very useful $K(1)$-local fiber sequence which comes up from the resolution of the $K(1)$-local sphere and which will help us constructing an Artin-Schreier class and in the construction of spherical classes. Although we only need this special fiber sequence, we will sketch quite briefly also the general setup for the resolution of the $K(n)$-local sphere, which will give us some motivation for all that stuff: It is the theory of formal group laws which is the starting point. We refer to \cite{Re97}: Having the Honda formal group law of height $n$ -- which is characterized by its $p$-series $[p]_{\Gamma_n}(x)=x^{p^n}$ -- we apply the theory of Lubin-Tate deformation theories which is Landweber exact and gives homology theories called {\em Morava E-theories} $E_n$. Considering the automorphisms $\Aut(\Gamma_n)$ of the Honda formal group law $\Gamma_n$ (also known as the Morava stabilizer group $\mathbb{S}_n$) one considers the group
$$ \mathbb{G}_n=\Aut(\Gamma_n) \rtimes \Gal(\mathbb{F}_{p^n}/\mathbb{F}_p)$$
which gives a group action on $E_{n*}$. The Hopkins-Miller theorem states that $\mathbb{S}_n$ gives an action on the spectrum $E_n$ itself, and the Adams-Novikov spectral sequence 
$$ E_2^{s,t}:=H^s(\mathbb{S}_n,(E_n)_t)^{\Gal(\mathbb{F}_{p^n}/\mathbb{F}_p)} \Rightarrow \pi_{t-s} L_{K(n)} S^0$$
provides computational methods for calculating the homotopy groups of the $K(n)$-local sphere.

\subsubsection{The case $n=1$}
For $n=1$ the Honda formal group law $\Gamma_1$ coincides with the multiplicative formal group law $\mathbb{G}_m(x,y)=x+y+xy=(1+x)(1+y)-1$ because their $p$-series coincide modulo $p$
$$ [p]_{\mathbb{G}_m}(x)=(1+x)^p -1 \equiv x^p = [p]_{\Gamma_1}(x).$$
In our $p$-local setting we have $\Aut(\mathbb{G}_m)\cong \mathbb{Z}_p^{\times}$ and thus we have
$$ \mathbb{G}_1 =\mathbb{S}_1 \cong \mathbb{Z}_p^{\times},$$
where $\mathbb{G}_1\cong \mathbb{Z}_p\times C_{p-1}$ for $p$ odd and $\mathbb{G}_1\cong \mathbb{Z}_2\times C_2$ for $p=2$. Following \cite{GHMR} there is a short exact sequence of continuous $\mathbb{G}_1$-modules
$$ 0\rightarrow \mathbb{Z}_p[\![\mathbb{G}_1/F]\!]\rightarrow \mathbb{Z}_p[\![\mathbb{G}_1/F]\!]\rightarrow \mathbb{Z}_p\rightarrow 0$$
where $F$ is the maximal finite subgroup of $\mathbb{G}_1$. These resolutions of the trivial module are analogues of the fibrations
$$ L_{K(1)} S^0\cong E_1^{h\mathbb{G}_1}\rightarrow E_1^{hF}\rightarrow E_1^{hF}$$
with the notation meaning the homotopy fixed point spectra with respect to the given group. We note that $p$-adic complex $K$-theory $K\mathbb{Z}_p$ is a model for $E_1$ and the homotopy fixed point spectrum $E_1^{hC_2}$ can be identified with 2-adic real $K$-theory $KO\mathbb{Z}_2$. Since every $p$-adic unit $k\in \mathbb{Z}_p^{\times}$ gives an Adams operation $\psi^k$ and vice versa, and for example 3 is a topolocal generator for $\mathbb{Z}_2^{\times}/\{\pm 1\}$, we can write
$$ L_{K(1)} S^0 \rightarrow KO\mathbb{Z}_2 \stackrel{\psi^3-1}{\rightarrow} KO\mathbb{Z}_2.$$ 
This is a resolution of the $K(1)$-local sphere -- and the $K(1)$-local sphere is the fiber of the Adams operation $\psi^3-1$. Hence this is our desired fiber sequence.
\begin{remark}\rm
At this point we want to point out that the profinite group $\mathbb{G}_1/F\cong \mathbb{Z}_p$ has the pro-group ring $\mathbb{Z}_p[\![\mathbb{G}_1/F]\!]$ which is isomorphic to the power series ring $\Lambda=\mathbb{Z}_p[\![T]\!]$ called the {\em Iwasawa algebra}. The Iwasawa algebra plays an important role in number theory when studying $\mathbb{Z}_p$-extensions of number fields and has been studied by many people for a long time. We refer to \cite{Wa97} for number theoretic properties and to \cite{HM07} for the interpretation for $K(1)$-local spectra. In \cite{HM07} the resolution of the $K(1)$-local sphere is stated as
$$ S^0 \rightarrow KO \stackrel{T}{\rightarrow} KO. $$
for $p=2$. Maybe more topology could be deduced from the analysis of the Iwasawa algebra.
\end{remark}

\subsection{Thom isomorphism}
Having a complex vector bundle $\xi:E\rightarrow X$ with $X$ compact Hausdorff one can construct the Thom space $\Th(\xi)=D(\xi)/S(\xi)$. If the bundle $\xi$ admits a Thom class $\tau\in \tilde{K}^*(\Th(\xi))$ we get an isomorphism of modules
$$ K^*(X) \stackrel{\cong}{\rightarrow} \tilde{K}^*(\Th(\xi))$$
called the Thom isomorphism. $\tilde{K}^*(\Th(\xi))$ is the free $K^*(X)$-module with single basis element the Thom class $\tau\in\tilde{K}^*(\Th(\xi))$.
\subsubsection{Generalization}
Let $E^*$ be a generalized multiplicative cohomology theory. 
\begin{definition}
A class $\tau\in E^*(D(\xi),S(\xi))$ is said to be a Thom class for $\xi$ if for every $x\in X$ the restriction of $\tau$ to $E^*(D(\xi_x),S(\xi_x))$ is an $E^*(pt)$-module generator. 
\end{definition}
Having a Thom class $\tau\in E^d(D(\xi),S(\xi))$ the homotopy equivalence $p:D(\xi)\rightarrow X$ inducing an isomorphism $p^*:E^*(X)\stackrel{\cong}{\rightarrow} E^*(D(\xi))$ leads us to the definition of the Thom isomorphism
$$ E^*(X) \rightarrow E^{*+d}(D(\xi),S(\xi))\cong \tilde{E}^{*+d}(\Th(\xi))$$
by applying the cup product and mapping
$$ \alpha \mapsto p^*(\alpha)\cup \tau.$$
This gives an isomorphism of graded modules over $E^*(pt)$.
\begin{remark}
For a trivial bundle of dimension 1 the Thom isomorphism reduces to the suspension isomorphism.
\end{remark}
To prove the Thom isomorphism for compact $X$ one proceeds by induction over the open sets in a trivialization of $\xi$ using the suspension isomorphism as the starting case and the Mayer-Vietoris sequence to carry out the inductive step.
\begin{remark}
There is also a homology Thom isomorphism
$$ \tilde{E}_{*+d}(\Th(\xi)) \stackrel{\cong}{\rightarrow} E_*(X), $$
using the cap product with the Thom class
$$ \cap: \tilde{E}_{*+d}(\Th(\xi)) \times \tilde{E}^d(\Th(\xi)) \rightarrow E_*(X)$$
instead of the cup product.
\end{remark}

\subsubsection{Strong form of the Thom isomorphism}
Following along the lines of \cite{MR81} one can state a strong form of the Thom isomorphism theorem. In this context we assume given a stable spherical fibration $\nu: X\rightarrow BF$ over a locally finite CW complex $X$, and a ring spectrum $E$ orienting $\nu$, i.e.\ a Thom class $\tau: \Th(\nu)\rightarrow E$ whose restriction to a fiber $S^0\hookrightarrow \Th(\nu)$ is the unit of $E$.
\begin{theorem}[Mahowald, Ray] 
There is a homotopy equivalence
$$ \alpha(\tau): E\wedge \Th(\nu) \rightarrow E\wedge X_+ $$
which on homotopy groups induces the traditional Thom isomorphism
$$ \phi_{\tau}: E_*(\Th(\nu))=\pi_*(E\wedge \Th(\nu)) \stackrel{\alpha(\tau)_*}{\rightarrow} \pi_*(E\wedge X_+)=E_*(X_+).$$
\end{theorem}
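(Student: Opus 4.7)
The plan is to construct $\alpha(\tau)$ explicitly from the Thom diagonal, then verify the equivalence by induction using the Mayer--Vietoris sequence and the hypothesized fiber-wise condition on $\tau$.

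First I would define the \emph{Thom diagonal} $\Delta_{\nu}\colon \Th(\nu)\to \Th(\nu)\wedge X_+$, coming from the fiberwise diagonal that sends a point in the total space of $\nu$ over $x\in X$ to the same point paired with $x$. Then set
\[
\alpha(\tau)\colon E\wedge \Th(\nu) \xrightarrow{1\wedge \Delta_{\nu}} E\wedge \Th(\nu)\wedge X_+ \xrightarrow{1\wedge \tau\wedge 1} E\wedge E\wedge X_+ \xrightarrow{\mu\wedge 1} E\wedge X_+.
\]
This is manifestly natural in $X$ and in the orientation $\tau$. The claim that $\alpha(\tau)_*$ realizes the classical Thom isomorphism $\phi_{\tau}$ then follows by unwinding the cap-product construction, since the cap product is defined via precisely the same recipe with $\Delta_{\nu}$ and $\mu$.

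Next I would verify $\alpha(\tau)$ is a homotopy equivalence cell by cell. The base case is a point, where $\Th(\nu|_{pt})\simeq S^0$ and $\tau|_{pt}$ is by hypothesis the unit $\eta\colon S^0\to E$; in this case $\alpha(\tau)$ reduces to the identity on $E$. More generally, over a contractible open $U\subset X$ where $\nu$ trivializes, $\Th(\nu|_U)\simeq \Sigma^d U_+$ and $\alpha(\tau)|_U$ reduces to the suspension isomorphism. For the inductive step on a finite CW complex $X$, suppose $X = A\cup_{\phi} e^n$. I would apply the five lemma to the cofiber sequence
\[
\Th(\nu|_A)\to \Th(\nu|_X)\to \Th(\nu|_X)/\Th(\nu|_A)\simeq S^n\wedge \Th(\nu|_{pt})
\]
smashed with $E$, versus the corresponding sequence $E\wedge A_+ \to E\wedge X_+\to E\wedge S^n$; naturality of $\alpha(\tau)$ produces the comparison map of triples, and the outer vertical maps are equivalences by induction.

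For a general locally finite CW complex $X = \colim_{\alpha} X_{\alpha}$ with $X_{\alpha}$ running over the finite subcomplexes, I would pass to the colimit. Both $E\wedge \Th(\nu|_{(-)})$ and $E\wedge (-)_+$ commute with the filtered colimit (smash products preserve colimits in each variable), and $\alpha(\tau)$ is compatible with the colimit system, so the equivalences over finite subcomplexes assemble to an equivalence on $X$; the Milnor $\lim^1$ sequence controls the passage to the limit on homotopy groups.

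The main obstacle I anticipate is the inductive step: one needs the Thom diagonal to be sufficiently natural with respect to restriction along inclusions of subcomplexes so that the comparison diagram of cofiber sequences actually commutes on the nose (or at least up to controlled homotopy). This is essentially a point-set question about how the Thom construction for stable spherical fibrations is modeled; in the framework where $\Th(\nu)$ is produced as a spectrum-level construction from the classifying map $X\to BF$, one must either work with a rigid model of Thom spectra or take some care with coherent homotopies. Once that compatibility is in hand, the rest of the argument is a standard Mayer--Vietoris/five-lemma/$\lim^1$ bookkeeping.
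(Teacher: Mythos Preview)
Your proposal is correct and follows essentially the same approach as the paper: both construct $\alpha(\tau)$ as the composite $E\wedge\Th(\nu)\xrightarrow{1\wedge\Delta}E\wedge\Th(\nu)\wedge X_+\xrightarrow{1\wedge\tau\wedge 1}E\wedge E\wedge X_+\xrightarrow{\mu\wedge 1}E\wedge X_+$ via the Thom diagonal and identify the induced map on homotopy with the cap product by $\tau$. The paper's proof is in fact briefer than yours---it reduces to the finite-dimensional case, writes down $\Delta$ and $\alpha(\tau)$ explicitly, and then simply invokes that $\phi_\tau$ is the classical cap-product Thom isomorphism; your cellular five-lemma induction and colimit passage supply the details the paper leaves to the reader.
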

\begin{proof}
First suppose that $X$ has finite dimension, so that $\nu$ lifts to $\nu_n: X\rightarrow BF_n$ for suitably large $n$. Let $p_n: S(\nu_n)\rightarrow X$ be the associated $n$-sphere fibration, so that $\Th(\nu_n)=X\cup_{p_n} CS(\nu_n)$. Now we define the Thom diagonal 
$$\Delta: \Th(\nu_n)\rightarrow \Th(\nu_n)\wedge X_+$$
to be
$$ \Delta(x)=\begin{cases} (x,p_n(x)) & \text{ for } x\neq \infty\\ \infty & \text{ for } x=\infty \end{cases} $$
and consider the composite
$$\alpha(\tau): E\wedge \Th(\nu_n)\stackrel{\id\wedge \Delta}{\rightarrow} E\wedge \Th(\nu_n)\wedge X_+\stackrel{\id\wedge \tau\wedge \id }{\rightarrow} E\wedge \Sigma^{n+1} E\wedge X_+\stackrel{\mu\wedge\id}{\rightarrow} E\wedge \Sigma^{n+1} X_+$$
where $\mu$ is the product in $E$. On homotopy groups this map induces a homomorphism
$$\phi_{\tau}: E_{*+n+1}(\Th(\nu_n))\rightarrow E_*(X_+)$$
which is the usual homology Thom isomorphism, i.e.\ cap product with $\tau$.
\end{proof}
The above theorem suffices for our purposes, i.e. we have $K_*MU\cong K_*BU$ and $K_*MSU\cong K_*BSU$.
\begin{example}
To define the Thom isomorphism $\Phi:K_*MU\rightarrow K_*BU$ we use the Thom diagonal 
$$MU\stackrel{\Delta}{\rightarrow} BU\wedge MU,$$ 
and choose a Thom class $MU\stackrel{\tau}{\rightarrow} K$. Thus we define the Thom isomorphism: An element $\overline{f}\in K_nMU$ represented by $S^n\stackrel{f}{\rightarrow} MU\wedge K$ is mapped to $\Phi \overline{f}$, more explicitly
$$S^n\stackrel{f}{\rightarrow} MU\wedge K\stackrel{\Delta\wedge 1}{\longrightarrow} BU\wedge MU\wedge K\stackrel{BU\wedge \tau\wedge 1}{\longrightarrow} BU\wedge K\wedge K \stackrel{BU\wedge \mu}{\longrightarrow} BU\wedge K.$$
\end{example}

%
%
\section{Splitting off an $E_{\infty}$ summand $T_{\zeta}$}
In the $K(1)$-local world at the prime $p=2$, we take the fiber sequence $S\rightarrow KO \stackrel{\psi^3-1}{\rightarrow} KO$ and look at the homotopy long exact sequence
$$... \rightarrow \pi_0 S^0 \longrightarrow KO_0 \stackrel{\psi^3-1}{\longrightarrow} KO_0 \longrightarrow \pi_{-1} S^0 \rightarrow ...$$
Since $KO_0\cong \mathbb{Z}_2$ are the 2-adic integers and $\psi^3$ is a ring homomorphism, $\psi^3-1$ is the zero map on $KO_0$. Thus $KO_0\rightarrow \pi_{-1}S^0$ is injective and the image of $1$ is a non-trivial element $\zeta\in \pi_{-1}S^0\cong \mathbb{Z}_2$. Now we are attaching a $0$-cell along $\zeta$ and take the homotopy pushout in the category of $E_{\infty}$ spectra:
\[
\xymatrix@R-.3cm@C-.2cm@M+.1cm
{ S^{-1} \ar[d]^{*}\ar[r]^{\zeta} & S^0 & & TS^{-1} \ar[d]^{T*}\ar[r]^{\zeta} & S^0 \ar[d]\\
D^0 &                                        & & TD^0=S^0 \ar[r] & T_{\zeta}
}\]
This $E_{\infty}$ spectrum $T_{\zeta}$ will be an $E_{\infty}$ summand in $MSU$. For this
\[
\xymatrix@R-.3cm@C-.2cm@M+.1cm
{ TS^{-1} \ar[d]^{T*}\ar[r]^{\zeta} & S^0 \ar[d]\ar@/^/[ddr] &\\
TD^0=S^0 \ar[r]\ar@/_/[drr] & T_{\zeta} \ar@{.>}[dr] & \\
 & & MSU
}\]
we have to show that $\zeta\in \pi_{-1} MSU$ vanishes. Considering the diagram
\[
\xymatrix@R-.3cm@C-.2cm@M+.1cm
{    & KO_0 S^0 \ar[r]\ar[d] & \pi_{-1} S^0 \ar[d]\\
KO_0 MSU \ar[r]^{\psi^3-1} & KO_0 MSU \ar[r] & \pi_{-1} MSU
}\]
it is sufficient to find an element $b\in KO_0 MSU$ mapping to $1$, because on the one hand the element $1\in KO_0S^0$ maps to $1\in KO_0MSU$ going to $0\in \pi_{-1} MSU$ due to the long exact sequence, and on the other hand the element $1\in KO_0S^0$ maps to $\zeta\in \pi_{-1}S^0$, which has to vanish in $\pi_{-1}MSU$ because the diagram commutes.
\begin{definition}
An Artin-Schreier class is a class $b\in KO_0 MSU$ with $\psi^3 b=b+1$.
\end{definition}
In the following part we construct such a class rationally and then give a construction of an $SU$-manifold which realizes this class.

\subsection{The image of $MSU_*\rightarrow MU_*$}
In $MSU_*$ every torsion is 2-torsion which is the kernel of $MSU_*\rightarrow MU_*$ concentrated in dimensions $8k+1$ and $8k+2$ for $k\ge 0$; in these cases $MSU_{8k+1}\cong MSU_{8k+2}$ is an $\mathbb{F}_2$ vector space whose dimension is the number of partitions of $k$ (compare \cite{CF66b}). Due to a theorem by Thom, complex bordism is rationally represented by complex projective spaces:
\begin{theorem}[Thom] 
$$MU_*\otimes \mathbb{Q} = \mathbb{Q}[\mathbb{CP}^n|n\ge 1] .$$
\end{theorem}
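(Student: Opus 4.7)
The plan is to reduce the claim to a computation in rational homology via the Hurewicz map and the Thom isomorphism. For any connective spectrum $E$, the spectrum $E \wedge H\mathbb{Q}$ splits as a wedge of shifted rational Eilenberg--MacLane spectra, so the Hurewicz map induces an isomorphism of graded rings
$\pi_*(MU) \otimes \mathbb{Q} \stackrel{\cong}{\longrightarrow} H_*(MU;\mathbb{Q})$.
This reduces the theorem to identifying $H_*(MU;\mathbb{Q})$ as a polynomial ring and then checking that the classes $[\mathbb{CP}^n]$ hit a set of polynomial generators.

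Next I would compute $H_*(MU;\mathbb{Q})$ using the ordinary homology Thom isomorphism from the preceding subsection: the universal Thom class orients $MU$ over $H\mathbb{Q}$ and produces an $H_*$-isomorphism $H_*(MU;\mathbb{Q}) \cong H_*(BU;\mathbb{Q})$. The ring $H_*(BU;\mathbb{Q})$ is the polynomial ring $\mathbb{Q}[\beta_1,\beta_2,\ldots]$ with $|\beta_n| = 2n$, where the $\beta_n$ are the images under the standard map $\mathbb{CP}^\infty = BU(1) \to BU$ of the basis elements of $H_*(\mathbb{CP}^\infty;\mathbb{Q})$ dual to the powers $x^n$ of the universal first Chern class -- exactly the $\beta_i$ recorded in the lemma at the end of Section~2.3. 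In particular the module of indecomposables of this polynomial ring is one-dimensional over $\mathbb{Q}$ in every even degree $2n$ and zero otherwise.

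The third and final step is to verify that the bordism class $[\mathbb{CP}^n] \in MU_{2n}$ is indecomposable in $\pi_*(MU)\otimes\mathbb{Q}$, or equivalently that it is detected on indecomposables in $H_*(BU;\mathbb{Q})$. Dually, this amounts to the nonvanishing of a single Chern number per degree, namely the Milnor number $s_n$ corresponding to the Newton polynomial $s_n(c_1,\ldots,c_n)$ in the Chern classes. The standard calculation uses the identification $T\mathbb{CP}^n \oplus \underline{\mathbb{C}} \cong (n+1)\,\overline{L}$, from which $s_n(T\mathbb{CP}^n) = (n+1)\, x^n$ and integration over the fundamental class yields $s_n([\mathbb{CP}^n]) = n+1 \neq 0$. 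Since the indecomposable quotient of $H_*(BU;\mathbb{Q})$ in degree $2n$ is one-dimensional, this forces the $[\mathbb{CP}^n]$ to form an algebraically independent set of polynomial generators, completing the identification.

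The proof is really an assembly of known inputs rather than a single hard step: the rational wedge splitting of $MU\wedge H\mathbb{Q}$ (equivalently, the rational collapse of the Atiyah--Hirzebruch spectral sequence), the homology Thom isomorphism, the computation $H_*(BU;\mathbb{Q}) = \mathbb{Q}[\beta_1,\beta_2,\ldots]$, and the Chern number $s_n([\mathbb{CP}^n]) = n+1$. The main conceptual obstacle is ensuring that indecomposables are detected by a single characteristic number per degree; once this is granted, the nonvanishing $n+1 \neq 0$ in $\mathbb{Q}$ finishes the argument.
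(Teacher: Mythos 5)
Your argument is correct, and it is the standard Thom--Milnor proof; note that the paper itself states this theorem without proof, quoting it as a classical input (it is only used there to write $SU$-classes rationally as combinations of products of projective spaces), so there is no in-paper argument to compare against. The chain rational Hurewicz isomorphism $\pi_*(MU)\otimes\mathbb{Q}\cong H_*(MU;\mathbb{Q})$, multiplicative Thom isomorphism onto $H_*(BU;\mathbb{Q})=\mathbb{Q}[\beta_1,\beta_2,\dots]$, and detection of indecomposability by the Newton-polynomial number $s_n$ is exactly the classical route, and the final graded argument (one indecomposable per even degree, each $[\mathbb{CP}^n]$ nonzero in the indecomposable quotient, hence a polynomial generating set) is sound. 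One cosmetic point: the Hurewicz--Thom image of a bordism class records \emph{normal} characteristic numbers (as the paper itself uses when writing $[\mathbb{CP}^n]=(\sum_i b_i)^{-(n+1)}_n$), so strictly you should quote $s_n(\nu_{\mathbb{CP}^n})[\mathbb{CP}^n]=-(n+1)$ rather than the tangential value $n+1$; since $s_n$ is additive and vanishes on trivial bundles the two differ only by sign, and nonvanishing in $\mathbb{Q}$ is all you need, so the proof stands.
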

\noindent
The obstruction for a $U$-manifold to be an $SU$-manifold is the first Chern class $c_1$ of the tangent bundle. Hence a manifold $M\in MSU_4$ is rationally a linear combination
$$ M=A\cdot \mathbb{CP}^1\times \mathbb{CP}^1 + B\cdot \mathbb{CP}^2\quad\text{with}\quad c_1^2[M]=0;$$
in the above notation we always mean their bordism classes and have omitted the brackets for brevity.
An example of an $SU$-manifold is the Kummer surface
$$\mathcal{K}=K3=\{z\in \mathbb{CP}^3 | z_0^4+z_1^4+z_2^4+z_3^4=0\}$$
which is $U$-bordant to $ K3 \sim_U 18 (\mathbb{CP}^1)^2 - 16\mathbb{CP}^2$. Indeed $MSU_4=\mathbb{Z}\langle K3 \rangle$ since the Todd-genus ($\hat{A}$-genus respectively) of an $SU$-manifold is even and $Td(K3)=2$. It turns out that we cannot construct an Artin-Schreier class out of a class in $MSU_4$ since we need an $SU$-manifold with $\hat{A}=1$. Therefore we are interested in the image of $MSU_8 \rightarrow MU_8$. Rationally this is a linear combination
$$ M= A\cdot \mathbb{CP}^4 + B\cdot \mathbb{CP}^1\times \mathbb{CP}^3 + C\cdot (\mathbb{CP}^2)^{\times 2} + D\cdot (\mathbb{CP}^1)^{\times 4} + E\cdot (\mathbb{CP}^1)^{\times 2} \times \mathbb{CP}^2;$$
requiring the first Chern class to vanish implies the conditions $ c_1^4[M]=c_1c_3[M]=c_1^2 c_2[M]=0$ in the Chern numbers. To express them as linear equations in the coefficients we first have to calculate the total Chern classes of the complex projective spaces and their products:
\begin{eqnarray*}
c(T\mathbb{CP}^4) & = & c(1\oplus T\mathbb{CP}^4)=c(5L^*)=(1+x)^5=1+5x+10x^2+10x^3+5x^4\\
c(T(\mathbb{CP}^1\times\mathbb{CP}^3)) & = & pr_1^*c(T\mathbb{CP}^1)\cdot pr_2^* c(T\mathbb{CP}^3) =(1+x_1)^2(1+x_2)^4\\ &=&(1+2x_1)(1+4x_2+6x_2^2+4x_2^3)\\
& = & 1+(2x_1+4x_2)+(8x_1x_2+6x_2^2)+(12x_1x_2^2+4x_2^3)+8x_1x_2^3\\
c(T(\mathbb{CP}^2\times\mathbb{CP}^2)) & = & pr_1^* c(T\mathbb{CP}^2)\cdot pr_2^* c(T\mathbb{CP}^2)=(1+x_1)^3(1+x_2)^3\\
& = & (1+3x_1+3x_1^2)(1+3x_2+3x_2^2)\\
& = & 1+(3x_1+3x_2)+(3x_1^2+9x_1x_2+3x_2^2) +(9x_1^2x_2+9x_1x_2^2)+9x_1^2x_2^2\\
c(T(\mathbb{CP}^1)^{\times 4}) & = & (1+x_1)^2(1+x_2)^2(1+x_3^2)(1+x_4)^2 \\
&=&(1+2x_1)(1+2x_2)(1+2x_3)(1+2x_4)\\
&=& 1+ 2(x_1+x_2+x_3+x_4)\\
& & +4(x_1x_2+x_1x_3+x_1x_4+x_2x_3+x_2x_4+x_3x_4)\\
& & +8(x_1x_2x_3+x_1x_2x_4+x_1x_3x_4+x_2x_3x_4)+16x_1x_2x_3x_4\\
c(T((\mathbb{CP}^1)^2\times \mathbb{CP}^2)) & = & (1+x_1)^2(1+x_2)^2(1+x_3)^3=(1+2x_1)(1+2x_2)(1+3x_3+3x_3^2)\\ 
&=& 1 + (2x_1+2x_2+3x_3) + (4x_1x_2+6x_1x_3+6x_2x_3+3x_3^2)\\
& & + (6x_1x_3^2+6x_2x_3^2+12x_1x_2x_3)+12x_1x_2x_3^2
\end{eqnarray*}
Now we calculate the Chern numbers $c_1^4(TM)[M]$, $c_1c_3(TM)[M]$ and $c_1^2c_2(TM)[M]$ by evaluating them on the complex projective spaces:
\begin{eqnarray*}
c_1^4(T\mathbb{CP}^4)[\mathbb{CP}^4] &=& (5x)^4[\mathbb{CP}^4]=625\\
c_1^4[\mathbb{CP}^1\times\mathbb{CP}^3] &=&(2x_1+4x_2)^4[\mathbb{CP}^1\times\mathbb{CP}^3]=512x_1x_2^3[\mathbb{CP}^1\times\mathbb{CP}^3]=512\\
c_1^4[\mathbb{CP}^2\times\mathbb{CP}^2] &=& 3^4(x_1+x_2)^4[\mathbb{CP}^2\times\mathbb{CP}^2]=486x_1^2x_2^2[\mathbb{CP}^2\times\mathbb{CP}^2]=486\\
c_1^4[(\mathbb{CP}^1)^{\times 4}]&=&2^4(x_1+x_2+x_3+x_4)^4[(\mathbb{CP}^1)^{\times 4}]\\ & = & 2^4\cdot 4! \cdot x_1x_2x_3x_4[(\mathbb{CP}^1)^{\times 4}]=384\\
c_1^4[(\mathbb{CP}^1)^{\times 2}\times \mathbb{CP}^2)] &=& (2x_1+2x_2+3x_3)^4[(\mathbb{CP}^1)^{\times 2}\times \mathbb{CP}^2)]\\
&= & 432x_1x_2x_3^2[(\mathbb{CP}^1)^{\times 2}\times \mathbb{CP}^2)]=432
\end{eqnarray*}
gives the equation
$$c_1^4[M]=  0 = 625A + 512B + 486C + 384D + 432E,$$
evaluation of $c_1c_3(TM)[M]$
\begin{eqnarray*}
c_1c_3(T\mathbb{CP}^4)[\mathbb{CP}^4] &=& 5x\cdot 10x^3 [\mathbb{CP}^4]=50\\
c_1c_3[\mathbb{CP}^1\times\mathbb{CP}^3] &=&(2x_1+4x_2)(12x_1x_2^2+4x_2^3)[\mathbb{CP}^1\times\mathbb{CP}^3]=56x_1x_2^3[\mathbb{CP}^1\times\mathbb{CP}^3]=56\\
c_1c_3[\mathbb{CP}^2\times\mathbb{CP}^2] &=& (3x_1+3x_2)(x_1^3+9x_1^2x_2+9x_1x_2^2+x_2^3)[\mathbb{CP}^2\times\mathbb{CP}^2]\\
&=& 54x_1^2x_2^2[\mathbb{CP}^2\times\mathbb{CP}^2]=54\\
c_1c_3[(\mathbb{CP}^1)^{\times 4}] &=& 16(x_1+x_2+x_3+x_4)(x_1x_2x_3+x_1x_2x_4+x_1x_3x_4+x_2x_3x_4)[(\mathbb{CP}^1)^{\times 4}]\\
&=&64 x_1x_2x_3x_4[(\mathbb{CP}^1)^{\times 4}]=64\\
c_1c_3[(\mathbb{CP}^1)^{\times 2}\times \mathbb{CP}^2)] &=& (2x_1+2x_2+3x_3)(6x_1x_3^2+6x_2x_3^2+12x_1x_2x_3)[(\mathbb{CP}^1)^{\times 2}\times \mathbb{CP}^2)]\\
&=&60x_1x_2x_3^2[(\mathbb{CP}^1)^{\times 2}\times \mathbb{CP}^2)]=60
\end{eqnarray*}
gives the equation
$$c_1c_3[M]=0=50A + 56B+ 54C+ 64D + 60E,$$
and evaluation of $c_1^2c_2(TM)[M]$
\begin{eqnarray*}
c_1^2c_2(T\mathbb{CP}^4)[\mathbb{CP}^4] &=& (5x)^2\cdot 10 x^2[\mathbb{CP}^4]=250x^4[\mathbb{CP}^4]=250\\
c_1^2c_2[\mathbb{CP}^1\times\mathbb{CP}^3]&=& (2x_1+4x_2)^2(x_1^2+8x_1x_2+6x_2^2)[\mathbb{CP}^1\times\mathbb{CP}^3]\\
&=& 224x_1x_2^3[\mathbb{CP}^1\times\mathbb{CP}^3]=224\\
c_1^2c_2[\mathbb{CP}^2\times\mathbb{CP}^2]&=& (3x_1+3x_2)^2(3x_1^2+9x_1x_2+3x_2^2)[\mathbb{CP}^2\times\mathbb{CP}^2]\\
&=&216x_1^2x_2^2[\mathbb{CP}^2\times\mathbb{CP}^2]=216\\
c_1^2c_2[(\mathbb{CP}^1)^{\times 4}]&=&16(x_1+x_2+x_3+x_4)^2\times\\
& &(x_1x_2+x_1x_3+x_1x_4+x_2x_3+x_2x_4+x_3x_4)[(\mathbb{CP}^1)^{\times 4}]\\
&=&192x_1x_2x_3x_4[(\mathbb{CP}^1)^{\times 4}]=192\\
c_1^2c_2[(\mathbb{CP}^1)^{\times 2}\times \mathbb{CP}^2)] &=& (2x_1+2x_2+3x_3)^2\times\\
& & (4x_1x_2+6x_1x_3+6x_2x_3+3x_3^2)[(\mathbb{CP}^1)^{\times 2}\times \mathbb{CP}^2)]\\
&=& 204x_1x_2x_3^2[(\mathbb{CP}^1)^{\times 2}\times \mathbb{CP}^2)]=204
\end{eqnarray*}
gives the equation
$$c_1^2c_2[M]= 0= 250A + 224B + 216C+ 192D + 204E.$$
Hence we consider the system of linear equations
\begin{center}
\begin{tabular}{rcrcrcrcrcr}
$c_1^4[M]$=    & 0= & 625A & + & 512B & + & 486C & + & 384D & + & 432E\\
$c_1c_3[M]$=   & 0= &  50A & + &  56B & + &  54C & + &  64D & + &  60E\\
$c_1^2c_2[M]$= & 0= & 250A & + & 224B & + & 216C & + & 192D & + & 204E\\
\end{tabular}
\end{center}
which is integrally equivalent to the following system of homogeneous linear equations:
\begin{center}
\begin{tabular}{crcrcrcrcr}
0= & 25A & + & 8B &  &   &   &     &   &     \\
0= &     & + & 4B &  &   & + & 16D & + &  9E \\
0= &     &   &    & -&27C& + & 48D & + & 15E \\
\end{tabular}
\end{center}
The space of solutions is 2-dimensional. We know one solution $K3\times K3$, i.e.\ the square of the Kummer surface, having the parameter representation $$(A,B,C,D,E) = (0,0,256,324,-576)$$ or
$$ \mathcal{K}^2=K3\times K3 \sim_U 256 \mathbb{CP}^2\times\mathbb{CP}^2   +324(\mathbb{CP}^1)^{\times 4} -576 (\mathbb{CP}^1)^{\times 2}\times \mathbb{CP}^2.$$
Another independent solution is given in parameter representation as $(A,B,C,D,E) = (8,-25,-12,-23,52)$ or as
$$N:=8\mathbb{CP}^4 -25\mathbb{CP}^1\times\mathbb{CP}^3 -12\mathbb{CP}^2\times\mathbb{CP}^2 -23 (\mathbb{CP}^1)^{\times 4} +52 (\mathbb{CP}^1)^{\times 2}\times \mathbb{CP}^2.$$
Hence we can rationally describe bordism classes of $SU$-manifolds under the injection $MSU_8\rightarrow MU_8$ via
$$ M= k\cdot (K3)^2+ l\cdot N$$
with $k,l\in\mathbb{Q}$. In the next section we take the values $(k,l)=(\frac{1}{4},12)$ and study its $K$-theory class under the map $MU_*\rightarrow K_*MU$ using Miscenkos formula which gives us an Artin-Schreier class. 

\subsection{Formal group laws and Miscenkos formula} 
\subsubsection*{Formal group laws} In the following part we briefly recall the notions of the theory of formal group laws which we use to construct the morphism $MU_*\rightarrow K_*MU$. We restrict to commutative, one-dimensional formal group laws.
\begin{definition}
Let $R$ be a commutative ring with unit. A formal group law over $R$ is a power series $F(x,y)\in R[\![x,y]\!]$ satisfying
\begin{enumerate}
\item $F(x,0)=x=F(0,x)$
\item $F(x,y)=F(y,x)$
\item $F(x,F(y,z))=F(F(x,y),z).$
\end{enumerate}
\end{definition}
These axioms correspond to the existence of a neutral element, commutativity and associativity in the group case. Obviously we can write $F(x,y)=x+y+\sum_{i,j\ge 1} a_{ij}x^iy^j$ with $a_{ij}=a_{ji}$, and in terms of the power series it is clear that there exists an inverse, i.e.\ a formal power series $\iota(x)\in R[\![x]\!]$ such that $F(x,\iota(x))=0$. Formal group laws are naturally related to complex oriented theories in the following way: The Euler class of a tensor product of line bundles defines a formal group law
$$\widehat{G_E}(x,y)=e(L_1\otimes L_2)\in E^*(\mathbb{CP}^{\infty}\times \mathbb{CP}^{\infty})\cong\pi_* E[\![x,y]\!]$$
with $x=e(L_1)$ and $y=e(L_2)$.
\begin{example}
The additive formal group law $\mathbb{G}_a(x,y)=x+y$ arises as an orientation of singular cohomology.
The multiplicative formal group law $\mathbb{G}_m(x,y)=x+y-xy$ comes up as an orientation of complex $K$-theory.
In the following we will encounter the universal formal group law $F_u$ via complex cobordism \textup{(}$MU$-theory \textup{)}.
\end{example}
\begin{definition}
Let $F$ and $G$ be formal group laws. A homomorphism $f:F\rightarrow G$ is a power series $f(x)\in R[\![x]\!]$ with constant term $0$ such that $f(F(x,y))=G(f(x),f(y))$. It is an isomorphism if it is invertible, i.e.\ if $f'(0)$ \textup{(}the coefficient of $x$\textup{)} is a unit in $R$, and a strict isomorphism if $f'(0)=1$. A strict isomorphism from $F$ to the additive formal group law $\mathbb{G}_a$ is called a logarithm for $F$, denoted $\log_F(x)$. Its inverse power series is called exponential, denoted $\exp_F(x)$.
\end{definition}

\begin{example}
Over a $\mathbb{Q}$-algebra every formal group law is isomorphic to the additive formal group law. Especially the logarithm of the universal formal group law is given by
$$ \log_{MU} (x)=\sum_{n\ge 0} \frac{[\mathbb{CP}^n]}{n+1} x^{n+1}.$$
\end{example}

\begin{proposition} If $x_1,x_2$ are two complex orientations for $E^*(-)$, then their associated formal group laws $F_1$ and $F_2$ are isomorphic.
\end{proposition}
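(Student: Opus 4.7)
The plan is to produce the isomorphism concretely as the power series expressing one orientation in terms of the other, and then to verify compatibility with the group laws by naturality with respect to the tensor-product multiplication on $\mathbb{CP}^\infty$.

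First, by the corollary already established, $E^*\mathbb{CP}^\infty \cong E^*[\![x_1]\!]$, so I can write $x_2 = f(x_1)$ for a unique power series $f(t) \in E^*[\![t]\!]$. Since $x_2 \in \tilde{E}^2\mathbb{CP}^\infty$, the constant term $f(0)$ vanishes. Restricting along $\mathbb{CP}^1 \hookrightarrow \mathbb{CP}^\infty$ and using the defining property that both $x_1$ and $x_2$ map to the same generator of $\tilde{E}^2\mathbb{CP}^1 \cong \tilde{E}^0 S^0$, the reduction of $f$ modulo $t^2$ must be $t$, so $f(t) = t + O(t^2)$. In particular $f$ is invertible under composition and $f'(0) = 1$, which are exactly the conditions needed for $f$ to be a strict isomorphism of formal group laws.

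Next, I would invoke the multiplication map $\mu\colon \mathbb{CP}^\infty \times \mathbb{CP}^\infty \to \mathbb{CP}^\infty$ classifying the tensor product of the two pullbacks of the tautological line bundle. Under the identification $E^*(\mathbb{CP}^\infty \times \mathbb{CP}^\infty) \cong E^*[\![x_i\otimes 1,\, 1\otimes x_i]\!]$ furnished by either orientation (via the lemma), the formal group law $F_i$ is characterized by $\mu^* x_i = F_i(x_i \otimes 1,\, 1 \otimes x_i)$. Substituting $x_2 = f(x_1)$ and using that $\mu^*$ is a ring map gives
\[
F_2\bigl(f(x_1 \otimes 1),\, f(1 \otimes x_1)\bigr) \;=\; \mu^* x_2 \;=\; \mu^* f(x_1) \;=\; f(\mu^* x_1) \;=\; f\bigl(F_1(x_1 \otimes 1,\, 1 \otimes x_1)\bigr).
\]
Since $x_1 \otimes 1$ and $1 \otimes x_1$ generate a power series ring freely over $E^*$, this identity lifts to the formal equality $f(F_1(u,v)) = F_2(f(u), f(v))$ in $E^*[\![u,v]\!]$, exhibiting $f$ as the desired (strict) isomorphism $F_1 \to F_2$.

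There is no serious obstacle here; the proof is essentially a direct unwinding of the definitions, relying only on the already-cited computations of $E^*\mathbb{CP}^\infty$ and $E^*(\mathbb{CP}^\infty \times \mathbb{CP}^\infty)$ together with the common normalization on $\mathbb{CP}^1$. The one spot that merits care is confirming that the linear coefficient of $f$ is exactly $1$ (not merely a unit), so that one obtains a strict isomorphism rather than just an isomorphism; this follows immediately from the restriction to $\mathbb{CP}^1$, after which naturality of $\mu^*$ closes the argument.
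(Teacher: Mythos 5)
Your argument is correct, and in fact the paper states this proposition without proof, so you are supplying the standard argument it leaves implicit: write $x_2=f(x_1)$ via $E^*\mathbb{CP}^\infty\cong E^*[\![x_1]\!]$, use the common normalization on $\mathbb{CP}^1$ to get $f(t)=t+O(t^2)$, and compare $\mu^*x_2$ computed two ways to obtain $f(F_1(u,v))=F_2(f(u),f(v))$, which even yields a \emph{strict} isomorphism, slightly more than the proposition asserts. The only step you use silently is that $\mu^*$ and the projections $pr_i^*$ commute with the formation of power series in the orientation classes; this is legitimate because the identifications $E^*\mathbb{CP}^\infty\cong E^*[\![x]\!]$ and $E^*(\mathbb{CP}^\infty\times\mathbb{CP}^\infty)\cong E^*[\![u,v]\!]$ are compatible with the skeletal (Milnor $\lim^1$) topology, so induced maps are continuous ring maps --- worth a sentence, but not a gap.
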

In the context of formal group laws let $F_{MU}$ denote the universal formal group law
$$F_{MU}(x,y)=x+y+\sum_{i,j\ge 1} a_{ij} x^i y^j$$
with the coefficients $a_{ij}\in L$ in the Lazard ring with degree $|a_{ij}|=2-2(i+j)$. Let
$$F_K(x,y)=x+y+vxy$$
denote the multiplicative formal group law corresponding to the $K$-theory spectrum with $v$ the inverse Bott element with $|v|=-2$.
Now we are going to construct a morphism
$$ f: MU_*\rightarrow K_*MU$$
such that the induced formal group law 
$$ f^* F_{MU}(x,y):=x+y+\sum_{i,j\ge 1} f(a_{ij}) x^i y^j$$
is the formal group law $F_K$ twisted by the invertible power series 
$$g(x)=\sum_{i\ge 0} b_i x^{i+1}$$ 
(with $b_0=1$) defined by
$$ ^gF_K(x,y):=g(F_K(g^{-1}(x),g^{-1}(y)))=g(g^{-1}(x)+g^{-1}(y)+vg^{-1}(x)g^{-1}(y))$$
with $g^{-1}(g(x))=x$ the inverse function.

\subsubsection*{Boardman homomorphism}
The element $a_{ij}\in \pi_{2(i+j-1)}$ can be represented by a weakly almost complex manifold. To ask for the (normal) characteristic numbers of this manifold is (essentially) equivalent to asking for the image of $a_{ij}$ under the Hurewicz homomorphism
$$ \pi_*MU \rightarrow H_*MU.$$
We introduce the Boardman homomorphism, which is (slightly) more general than the Hurewicz homomorphism. Let $E$ be a (commutative) ring spectrum, then for any (space or spectrum) $Y$ we consider the map
$$ Y\cong S^0\wedge Y \stackrel{i\wedge 1}{\rightarrow} E\wedge Y.$$
Composing a map $X\rightarrow Y$ with this map induces a homomorphism
$$ B: [X,Y]_*\rightarrow [X,E\wedge Y]_* $$
called the Boardman homomorphism. The Hurewicz homomorphism is recovered by setting $X=S^0$ and $E=H$ (the Eilenberg-MacLane spectrum representing singular homology).\\ 
Since $E\wedge Y$ is at least a module spectrum over the ring spectrum $E$, we may obtain information about $[X,E\wedge Y]_r=(E\wedge Y)^{-r}(X)$ from $E_*(X)$, for example there is a universal coefficient theorem
\[\xymatrix@R-.3cm@C-.2cm@M+.1cm
{ [X,Y]_* \ar[rr]^B \ar[dr]_{\alpha} & & [X,E\wedge Y]_* \ar[dl]^p\\
 & \Hom_{\pi_*E}(E_*X,E_*Y) & }\]
where $\alpha(f)=f_*:E_*X\rightarrow E_*Y$ is the induced map in $E$-homology and $p$ is defined by $(p(h))(k)=\langle h,k\rangle \in E_*Y$ using the Kronecker pairing
$$ (E\wedge Y)^*(X) \otimes E_*X \rightarrow E_*Y $$
with 
$$h\otimes k \mapsto \langle h,k\rangle : S\rightarrow E\wedge X \stackrel{1\wedge h}{\rightarrow} E\wedge E\wedge Y \stackrel{\mu\wedge 1}{\rightarrow} E\wedge Y .$$

\subsubsection*{Miscenkos formula}
We recall that power series of the form $g(x)=x+b_1x^2+b_2x^3+...$ are strict isomorphisms 
$$ g: F \stackrel{\cong}{\longrightarrow} \,^gF=g(F(g^{-1}x,g^{-1}y))$$
and want to give the explicit coefficients of the inverse power series $g^{-1}(x)=\sum_{i\ge 0} c_i x^{i+1}$. We calculate the first coefficients taking everything modulo $x^6$ and using the identity
\begin{eqnarray*}
x & \equiv & g^{-1}(g(x))= g(x)+c_1 g(x)^2+c_2 g(x)^3 + c_3 g(x)^4 + c_4g(x)^5+... \quad (\mod x^6)\\
  & \equiv & x +b_1x^2+b_2x^3+b_3x^4+b_4x^5\\
  &   & +c_1(x^2+2b_1x^3+(2b_2+b_1^2)x^4+(2b_3+2b_1b_2)x^5)\\
  &   & +c_2(x^3+3b_1x^4+(3b_2+3b_1^2)x^5+c_3(x^4+4b_1x^5)+c_4x^5
\end{eqnarray*}
Comparing coefficients gives the system of equations
\begin{eqnarray*}
0 &=& c_1+b_1\\
0 &=& c_2+2b_1c_1+b_2\\
0 &=& c_3+3b_1c_2+c_1(2b_2+b_1^2)+b_3\\
0 &=& c_4+4b_1c_3+c_2(3b_2+3b_1^2)+c_1(2b_3+2b_1b_2)+b_4
\end{eqnarray*}
resulting in
\begin{eqnarray*}
c_1 &=& -b_1\\
c_2 &=& 2b_1^2-b_2\\
c_3 &=& -5b_1^3+5b_1b_2-b_3\\
c_4 &=& 14b_1^4-21b_1^2b_2+6b_1b_3+3b_2^2-b_4.
\end{eqnarray*}
Applying the residue theorem of complex analysis proves the following (as done in \cite[p.\ 65 Prop.\ (7.5)]{Adams74} ):
\begin{proposition} Denoting the degree $2n$-part of an inhomogeneous polynomial with a lower index $n$ we have
$$c_n=\frac{1}{n+1}(\sum_{i\ge 0} b_i)^{-(n+1)}_n \text{ and }\quad b_n=\frac{1}{n+1}(\sum_{i\ge 0} c_i)^{-(n+1)}_n.$$
\end{proposition}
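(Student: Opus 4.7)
The plan is to invoke the Lagrange inversion formula, for which the Cauchy/residue approach flagged in the statement is the classical route. First I would use $b_0=1$ to factor $g(x)=x\phi(x)$ with $\phi(x):=\sum_{i\ge 0}b_i x^i\in R[\![x]\!]^{\times}$, and write
$$c_n\;=\;[x^{n+1}]g^{-1}(x)\;=\;\frac{1}{2\pi i}\oint \frac{g^{-1}(z)}{z^{n+2}}\,dz,$$
interpreting the contour integral coefficient-wise as a formal residue (so that the whole computation remains valid over any commutative base ring).

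The main computational step is the substitution $z=g(w)$, under which $g^{-1}(z)=w$ and $dz=g'(w)\,dw$. This transforms the integral into
$$c_n\;=\;\frac{1}{2\pi i}\oint\frac{w\,g'(w)}{g(w)^{n+2}}\,dw.$$
Now I would integrate by parts, using that $g'(w)/g(w)^{n+2}\,dw=-\tfrac{1}{n+1}\,d\bigl(g(w)^{-(n+1)}\bigr)$. The boundary term is an exact differential and hence residue-free, so
$$c_n\;=\;\frac{1}{n+1}\cdot\frac{1}{2\pi i}\oint\frac{dw}{g(w)^{n+1}}\;=\;\frac{1}{n+1}\,[w^n]\,\phi(w)^{-(n+1)},$$
where the last equality uses $g(w)^{-(n+1)}=w^{-(n+1)}\phi(w)^{-(n+1)}$.

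Finally I would match notation. With the grading convention $|b_i|=2i$ employed throughout the section, the coefficient of $w^n$ in $\phi(w)^{-(n+1)}=\bigl(\sum_{i\ge 0}b_i w^i\bigr)^{-(n+1)}$ is by definition the degree-$2n$ part of the inhomogeneous expression $\bigl(\sum_{i\ge 0}b_i\bigr)^{-(n+1)}$, which is the first formula. The second formula for $b_n$ is the same assertion applied to $g^{-1}$ in place of $g$: the pair $(g,g^{-1})$ is symmetric under inversion, so swapping $b\leftrightarrow c$ yields the claim.

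The substantive content is entirely in the residue manipulation; I expect the only mild obstacle to be justifying the formal integration by parts and change of variables in $R[\![w]\!]$, which reduces to the standard fact that $\operatorname{Res}(d(\,\cdot\,))=0$ for formal Laurent series. All remaining steps are bookkeeping with the topological grading.
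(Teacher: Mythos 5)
Your proposal is correct and is essentially the paper's own argument: the paper simply invokes "the residue theorem of complex analysis" with a citation to Adams (Prop.\ 7.5, p.\ 65), and your residue/Lagrange-inversion computation with the substitution $z=g(w)$ and formal integration by parts is exactly that proof, carried out in detail. The only slight imprecision is the claim of validity over an arbitrary base ring: because of the factor $\frac{1}{n+1}$ one should either work over a $\mathbb{Q}$-algebra (as the paper effectively does) or read the identity as the universal integral statement $(n+1)c_n=\bigl(\sum_i b_i\bigr)^{-(n+1)}_n$ in $\mathbb{Z}[b_1,b_2,\dots]$, but this does not affect the argument.
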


Next we explicitly calculate $^gF_K(x,y)=g(g^{-1}x+g^{-1}y+vg^{-1}x g^{-1}y):$
\begin{eqnarray*}
^gF_K(x,y)&=& x+y+ (v+2b_1)xy +(b_1v-2b_1^2+3b_2)(x^2y+xy^2)\\
          & & + (2vb_2-2vb_1^2+4b_3-8b_1b_2+4b_1^3) (x^3y+xy^3)\\
          & & + (v^2b_1-3vb_1^2+2b_1^3-6b_1b_2+6vb_2+6b_3)x^2y^2\\
          & & + (5vb_1^3 - 8vb_1b_2 + 25b_1^2b_2 + 3vb_3 -10b_1^4 -14b_1b_3 -6b_2^2 + 5b_4)\\
          & & \quad \times(x^4y+xy^4)\\
          & & + (4vb_1^3 - 18vb_1b_2 -4b_1^4 +8b_1^2b_2 - 2v^2b_1^2 + 3v^2b_2 -3b_2^2  \\
          & & \quad -16b_1b_3 + 12vb_3 +10b_4)\times (x^3y^2+x^2y^3)\\
          & & + \text{ higher order terms.}
\end{eqnarray*}
This implies:
\begin{eqnarray*}
a_{11}&\mapsto & v+2b_1\\
a_{21}&\mapsto & vb_1-2b_1^2+3b_2\\
a_{31}&\mapsto & 2vb_2 -2vb_1^2 + 4b_3 -8b_1b_2 + 4b_1^3\\
a_{22}&\mapsto & v^2b_1-3vb_1^2 +2b_1^3 - 6b_1b_2 + 6vb_2 + 6b_3\\
a_{41}&\mapsto & 5vb_1^3 - 8vb_1b_2 + 25b_1^2b_2 + 3vb_3 -10b_1^4 -14b_1b_3 -6b_2^2 + 5b_4\\
a_{32}&\mapsto & 4vb_1^3 - 18vb_1b_2 -4b_1^4 +8b_1^2b_2 - 2v^2b_1^2 + 3v^2b_2 -3b_2^2 -16b_1b_3 + 12vb_3 +10b_4
\end{eqnarray*}

Recall that the complex manifold $\mathbb{CP}^n$ defines an element $[\mathbb{CP}^n]\in\pi_{2n}MU$. The Hurewicz homomorphism
$$\pi_* MU \rightarrow H_* MU$$
tells us that the image of $[\mathbb{CP}^n]$ in $H_{2n}MU$ is $(n+1)c_n$ since the formula $(\sum_{i\ge 0} b_i)^{-(n+1)}_n$ gives the normal Chern numbers of $\mathbb{CP}^n$. The most important formula for us will be
$$[\mathbb{CP}^n]=(n+1)c_n=(\sum_{i\ge 0} a_{1i})^{-1}_n$$
leading to
\begin{center}
\begin{tabular}{|l|}
\hline
$[\mathbb{CP}^1]= -a_{11}$\\
$[\mathbb{CP}^2]= -a_{12}+a_{11}^2$\\
$[\mathbb{CP}^3]= -a_{13}-a_{11}^3 +2a_{11}a_{12}$\\
$[\mathbb{CP}^4]= -a_{14}+a_{11}^4+a_{12}^2+2a_{11}a_{13}$\\
\hline
\end{tabular}
\end{center}
Substituting these formulas we get
\begin{eqnarray*}
[N] & = & -112vb_1^3+340vb_1b_2 + 256b_1^2b_2-60vb_3 -184 b_1^4 + 40b_1b_3\\
    &   & +12b_2^2-40b_4+48v^2 b_2 + 58v^2 b_1^2 + 22 v^3b_1
\end{eqnarray*}
and
\begin{eqnarray*}
\frac{1}{4}[K3^2] & = & v^4 + 24 v^3 b_1 + 120v^2 b_1^2 +48v^2 b_2 -288vb_1^3+448 vb_1b_2\\
                  &   & +144b_1^4 - 576b_1^2+576b_2^2.
\end{eqnarray*}
Defining
$$ M:= \frac{1}{4} K3^2 + 12N$$
we get
\begin{eqnarray*}
[M] & = & v^4 + 16\cdot ( 18v^3b_1 + 51v^2 b_1^2+ 39v^2 b_2 -102vb_1^3 + 283 vb_1b_2 \\
    &   & - 45 vb_3 -129b_1^4 + 30b_1b_3 + 156b_1^2b_2 + 45b_2^2 - 30 b_4).
\end{eqnarray*}

\subsection{Construction of an $SU$-manifold with $\hat{A}=1$}
To split off the spectrum $T_{\zeta}$ from $MSU$ one essentially uses the existence of an Artin-Schreier class $b\in KO_0 MSU$ satisfying $\psi^3 b=b+1$. Via Miscenkos formula we have seen that such a class can be constructed with the logarithm construction if there is a Bott manifold whose associated $K$-theory class is congruent to $v^4$ modulo 16. Essentially we have to find a Bott manifold in $SU$ bordism, i.e.\ an $SU$-manifold $M$ with $\hat{A}([M])=1$ giving a periodicity element in $MSU_*$.

\subsubsection*{Main idea}
The Hopf bundle $\sigma: S^7\rightarrow S^4$ with fiber $S^3\cong SU(2)$ on the one hand admits an $SU$ structure and on the other hand generates $Im(J)_7\cong \Omega^{fr}_7\cong \pi_7^{st}\cong \mathbb{Z}/240$. Since $Td(D(\sigma))=1/240$ and since $240[\sigma]=0$ in $\Omega_7^{fr}$ implies the existence of a framed manifold $R^8$ with $\partial R^8=-240 \sigma$, we define
$$ B:= 240 D(\sigma) \cup_{240 \sigma} R^8$$
which serves as the desired Bott manifold, i.e.\ $Td(B)=\hat{A}(B)=1$.

\subsubsection*{$Sp(1)$-principal bundles over $S^4$}
With the identifications $ Sp(1)\cong SU(2) \cong S^3$ and $ Sp(2)/Sp(1) \cong S^7$ and 
$$ \frac{Sp(2)}{Sp(1)\times Sp(1)} \cong \mathbb{HP}^1 \cong S^4$$
we take the canonical $Sp(1)$-principal bundle over $S^4$
\[\xymatrix@R-.3cm@C-.2cm@M+.1cm
{ Sp(1)\cong S^3 \ar[r] & S^7 \ar[d]\\
 & S^4 }\]
i.e.\ the bundle whose associated line bundle $$ E:=S^7 \times_{Sp(1)} \mathbb{H}^1 \rightarrow S^4$$
satisfies $\langle c_2(E), [S^4] \rangle =1.$
We know that every $G$-principal bundle is given as the pullback of the universal $G$-principal bundle via the classifying map
\[\xymatrix@R-.3cm@C-.2cm@M+.1cm
{ f^*EG \ar[r]\ar[d] & EG \ar[d]\\
  B \ar[r]^f         & BG.}\]
In other words the functor $G\text{-}Pb(-)$ is representable by $BG$ and
$$ [B,BG]\cong G\text{-}Pb(B) \text{ via } f\mapsto f^*EG.$$
In the case of $Sp(1)$-principal bundles over $S^4$ we get
$$[S^4,BSp(1)]=[\Sigma S^3, BSp(1)] \cong [S^3,\Omega BSp(1)]=[S^3,Sp(1)]=[S^3,S^3] \cong \mathbb{Z}.$$
The canonical $Sp(1)$-principal bundle over $S^4$ is associated to $1\in\mathbb{Z}$.
We see that the disk bundle $Q:=D(E)$ with $\pi:Q\rightarrow S^4$ has as boundary 
$\partial Q= \partial D(E)=S(E)$ the original principal bundle.

\subsubsection*{Splitting of the tangent bundle $TQ$}
In general for a smooth vector bundle $\xi: E\rightarrow M$ the total space $E$ is again a smooth manifold. Now we are interested in the structure of the tangential bundle $TE$. There are two induced bundles, namely the induced tangential bundle and that of the total space:
\[\xymatrix@R-.3cm@C-.2cm@M+.1cm
{ \xi^* TM \ar[r]\ar[d] & E \ar[d]^{\xi}   &              & \xi^* E\ar[r]\ar[d] & E \ar[d]^{\xi}    \\
  TM \ar[r]             & M                & \text{and}   & E\ar[r]^{\xi}       & M        }\]
These already give an isomorphism
$$ TE \cong \xi^* TM \oplus \xi^*E. $$
Such a splitting of a tangent bundle is geometrically called a connection.
With the notation of above restricting the tangent bundle of the vector bundle to the disk bundle we get the splitting
$$ TQ \cong \pi^*E \oplus \pi^* TS^4;$$
note that the second summand is stably trivial. 

\subsubsection*{The Hopf bundle is an $SU$ manifold}
The Hopf bundle $\sigma: S^7\rightarrow S^4$ with fiber $S^3\cong SU(2)$ is not only an $SU(2)$-bundle but also an $SU$ manifold. A manifold $M$ has an $SU$ structure if its stable tangent bundle $TM$ is a complex vector bundle with a trivialization of its determinant bundle $det(TM)\cong 1_{\mathbb{C}}$.
\[\xymatrix@R-.3cm@C-.2cm@M+.1cm
{ D(\sigma) \ar[r]\ar[d] & \lambda_{taut} \ar[d]\ar[r] & \lambda_{taut}\ar[d]   \\
  S^4 \ar[r]             & BSU \ar[r]                  & BU.   }\]
From the splitting above we see the $SU$ structure, since the 8-dimensional bundle splits into two 4-dimensional bundles and $TS^4$ is stably trivial and $E$ is chosen to have vanishing~$c_1$.

\subsubsection*{Evaluation of the Todd genus}
We recall $Td= e^{c_1/2} \hat{A}$  and see that for $SU$ mani\-folds the Todd-genus and the $\hat{A}$-genus coincide. From \cite{Hi56} the degree 8-term of the Todd genus is given in Chern classes by:
$$ T_4=\frac{1}{720} (-c_4+c_3c_1+3c_2^2+4c_2c_1^2-c_1^4).$$
For the evaluation the Chern classes $c_1$ and $c_4$ do not contribute, due to the $SU$ structure and since $Q$ is a homotopy 4-sphere, respectively.
Next we emphasize that while for closed stably almost complex manifolds the Todd genus maps to the integers; the situation for $(U,fr)$ manifolds is different. A $(U,fr)$ manifold $M^n$ is a differentiable manifold $M$ with a given complex structure on its stable tangent bundle $TM$ and a given compatible framing of $TM$ restricted to the boundary $\partial M$. Their Chern numbers depend only on the bordism classes in $\Omega_n^{U,fr}$ and hence we have a Todd genus
$$ Td: \Omega^{U,fr}_{2n}\rightarrow \mathbb{Q}.$$
Moreover there is a commutative diagram
\[\xymatrix@R-.3cm@C-.2cm@M+.1cm
{ 0\ar[r] & \Omega^U_{2n} \ar[r]\ar[d]^{Td} & \Omega^{U,fr}_{2n} \ar[d]^{Td}\ar[r] & \Omega^{fr}_{2n-1} \ar[r]\ar[d]^{e_{\mathbb{C}}} & 0   \\
  0\ar[r] & \mathbb{Z} \ar[r]             & \mathbb{Q} \ar[r]                  & \mathbb{Q/Z}\ar[r] & 0   }\]
where $e_{\mathbb{C}}$ is the Adams $e$-invariant. This is worked out in \cite{CF66a}. As done on page 95 of \cite{CF66a} we can now evaluate the Todd genus 
\begin{eqnarray*}
\langle Td(TQ), [Q,\partial Q]\rangle &=& \langle \frac{1}{720} 3 c_2^2(E), [Q,\partial Q]\rangle =   \frac{1}{240} \langle c_2^2(E), [Q,\partial Q]\rangle\\
& =& \frac{1}{240}\langle c_2(E), [S^4]\rangle= \frac{1}{240}.
\end{eqnarray*}

\subsubsection*{Remark on the relation to $K$-theory}
In modern formulation the Todd genus is associated to the multiplicative formal group law and therefore to $K$-theory. Let $P(x)$ be a power series with 1 as constant coefficient. Its logarithm $g$ is given by
$$ g^{-1}(x)=\frac{x}{P(x)}.$$
Complex oriented cohomology theories always come with a formal group law $F(x,y)$ which can be expressed as
$$ F(x,y)=g^{-1}(g(x)+g(y)).$$
For the Todd genus we have $P(x)=\frac{x}{1-e^{-x}}$ implying $y=g^{-1}(x)=1-e^{-x}$. This gives us $g(x)=-\ln (1-x)$ and thus
\begin{eqnarray*}
F(x,y) & = & 1-\exp[-(-\ln(1-x)-\ln(1-y))]\\
       & = & 1-\exp(\ln(1-x)+\ln(1-y))\\
       & = & 1-(1-x)(1-y)=x+y-xy,
\end{eqnarray*}
which is the multiplicative formal group law coming from complex $K$-theory.

\subsubsection*{Definition of the Bott manifold}
Since $\partial Q=S^7$ is framed and $[\partial Q]\in \Omega_7^{fr}\cong \pi_7^s\cong \mathbb{Z}/240$ we have $240 [\partial Q]=0$, i.e.\ there exists a framed manifold $R^8$ with $\partial R^8=-240\partial Q$.
We define a Bott-manifold by
$$ B:= 240 Q \cup_{240 \partial Q} R^8$$
and see that indeed $\hat{A}(B)=Td(B)=240 Td(Q)+0=1$.

\subsection{Construction of an Artin-Schreier class}
Having a Bott manifold with associated $K$-theory class congruent to $v^4$ modulo $16$
we can use the power series of the logarithm 
$$\log (1+x)=\sum_{n=0}^{\infty} (-1)^n \frac{x^{n+1}}{n+1}$$
to define
$$ b= - \frac{\log([M])}{\log(3^4)}.$$
\begin{proposition}
The class $b$ is an Artin-Schreier class.
\end{proposition}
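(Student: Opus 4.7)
My plan is to reduce the Artin--Schreier identity $\psi^3 b = b+1$ to a concrete identity for $[M]$ under $\psi^3$, using that $\psi^3$ is a ring map commuting with the logarithm power series. Substituting the definition of $b$ yields
\[
\psi^3 b - b \;=\; -\frac{\log \psi^3[M] - \log[M]}{\log 3^4} \;=\; -\frac{\log\bigl(\psi^3[M]/[M]\bigr)}{\log 3^4},
\]
so the problem reduces to showing $\psi^3[M] = 3^{-4}[M]$: this gives $\log(\psi^3[M]/[M]) = -\log 3^4$ and hence $\psi^3 b - b = 1$.

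To compute $\psi^3[M]$ I would use the congruence $[M]\equiv v^4\pmod{16}$ established in the previous subsection, together with the fact that on the inverse Bott element $v$ (of degree $-2$) the Adams operation acts by $\psi^3 v = 3^{-1} v$, so $\psi^3 v^4 = 3^{-4}v^4$. Factoring $[M] = v^4(1+16w)$ makes the logarithm $\log[M] = 4\log v + \log(1+16w)$ converge 2-adically, and gives $\psi^3[M] = 3^{-4}v^4(1+16\psi^3 w)$. Already this yields the congruence $\psi^3[M]\equiv 3^{-4}[M]\pmod{16}$.

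The main obstacle is upgrading this congruence to the exact equality $\psi^3[M] = 3^{-4}[M]$, equivalently $\psi^3 w = w$. I would verify this by applying $\psi^3$ directly to the Miscenko-formula expansion of $[M]$ in terms of $v$ and the $b_i$: each $b_i$ transforms under $\psi^3$ via the reparameterisation of the multiplicative formal group law which carries $F_K(x,y)$ to $\psi^3\,{}^gF_K(x,y)$, and the specific rational combination $\frac{1}{4}K3^2 + 12N$ was engineered precisely so that these corrections cancel. Equivalently, in the 2-adic $K(1)$-local setting the class $[M]$ is characterised up to the kernel of $\log$ by its leading term $v^4$, forcing the Adams eigenvalue to be $3^{-4}$. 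Once this rigidity step is in place, the reduction in the first paragraph completes the proof.
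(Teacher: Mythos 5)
Your reduction in the first paragraph is exactly the crux: the paper's entire proof is the one-line computation $\psi^3 b = -\log([M]/3^4)/\log(3^4) = b+1$, which rests on the exact identity $\psi^3[M]=3^{-4}[M]$. Where your proposal has a genuine gap is in how you justify that identity. Neither of your two suggested routes works as stated. The rigidity claim --- that $[M]\equiv v^4 \pmod{16}$ ``forces the Adams eigenvalue to be $3^{-4}$'' --- is false: a congruence modulo $16$ cannot pin down an exact eigenvalue equation, and generic classes of the form $v^4+16(\text{polynomial in } v, b_i)$ are not $\psi^3$-eigenvectors at all. The computational route is equally problematic: the combination $\tfrac14 K3^2+12N$ was engineered to make the Todd/$\hat A$-genus equal to $1$, i.e.\ to achieve the congruence $[M]\equiv v^4\pmod{16}$ needed for the $2$-adic logarithm to converge and for $b$ to be integral (since $\log 3^4$ has $2$-adic valuation $4$); it was not engineered to make the $b_i$-corrections cancel under $\psi^3$, and verifying such a cancellation directly from the Miscenko expansion is a nontrivial computation you have not carried out (the $b_i$ transform under $\psi^3$ in a complicated way, which is precisely why the paper later needs cannibalistic classes to handle Adams operations on Thom spectra).

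The reason $\psi^3[M]=3^{-4}[M]$ holds exactly is structural, and it is what the paper's closing remark about the stable operation being $\Psi^k/k^n$ on $K_{2n}$ is pointing at: $[M]$ is not an arbitrary element of $K_0MSU$ but the Hurewicz (Boardman) image of an honest class in $\pi_8 MSU$, realized by the Bott manifold constructed in the preceding subsection. Since $\psi^3$ is a map of ring spectra it preserves the unit $S\to K$, so it fixes every class in the image of $\pi_8 MSU\to K_8MSU$; transporting to degree $0$ by Bott periodicity multiplies by $\psi^3(v^4)=3^{-4}v^4\cdot v^{-4}$, i.e.\ contributes exactly the factor $3^{-4}$. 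With that observation your first paragraph completes the proof, and the congruence modulo $16$ is needed only for convergence of $\log([M])$ and integrality of $b$, not for the eigenvalue identity.
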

\begin{proof}
$$\psi^3 b= -\frac{\log([M]/3^4)}{\log(3^4)}=-\frac{\log([M])}{\log(3^4)} + \frac{\log(3^4)}{\log(3^4)}=b+1.$$
Here the stable Adams operation $\psi^k: K\rightarrow K$ is defined levelwise by $\frac{\Psi^k}{k^n}:K_{2n}\rightarrow K_{2n}$ with $\Psi^k$ being the unstable Adams operation. Inverting powers of $k\in\mathbb{Z}_2^{\times}$ is not a problem since everything is $2$-completed. 
\end{proof}

\subsection{Construction of an $E_{\infty}$ map $T_{\zeta}\rightarrow MSU$}
The fiber sequence $X\rightarrow KO\wedge X \stackrel{\psi^3-1}{\longrightarrow} KO\wedge X$ induces the unit map $\pi_0 KO\rightarrow \pi_{-1} S^0$ mapping $1\mapsto \zeta$. Now we define $T_{\zeta}$ to be the homotopy pushout in the category of $K(1)$-local $E_{\infty}$ ring spectra:
\[
\xymatrix@R-.3cm@C-.2cm@M+.1cm
{ TS^{-1} \ar[r]^(.45){T*}\ar[d]^{\zeta} & T*=S^0 \ar[d]\\
S^0 \ar[r] & T_{\zeta}
}\]
with $TX$ the free $E_{\infty}$ spectrum generated by the pointed space $X$. As the Hurewicz image of $\zeta\in\pi_{-1}MSU$ is zero we get a map $T_{\zeta}\rightarrow MSU$:
\[
\xymatrix@R-.3cm@C-.2cm@M+.1cm
{ TS^{-1} \ar[r]^(.45){T*}\ar[d]^{\zeta} & T*=S^0 \ar[d]\ar@/^/[ddr] &\\
S^0 \ar[r]\ar@/_/[drr] & T_{\zeta} \ar@{.>}[dr] & \\
 & & MSU
}\]

\subsection{Split map - direct summand argument}
To get $T_{\zeta}$ as a direct summand, one has to  construct a split $p$ such that the composition
$$ T_{\zeta}\stackrel{i}{\rightarrow} MSU \stackrel{p}{\rightarrow} T_{\zeta} $$
is the identity. This can be done using the $Spin$ splitting of Laures \cite{Laures03}
\[
\xymatrix@R-.3cm@C-.2cm@M+.1cm
{ T_{\zeta} \ar[d]^i \ar@/^1pc/[dr]  & \\
T_{\zeta}\wedge \bigwedge_{i=1}^{\infty} TS^0 \ar@/^1pc/[u]^p & MSpin \ar[l]_(0.4){\simeq}
}\]
and showing that the extended triangle commutes
\[
\xymatrix@R-.3cm@C-.2cm@M+.1cm
{ & MSU\ar[dd]^g\\ 
T_{\zeta} \ar[ur]^h \ar[d]^i \ar@/^1pc/[dr]  & \\
T_{\zeta}\wedge \bigwedge_{i=1}^{\infty} TS^0 \ar@/^1pc/[u]^p & MSpin \ar[l]_(0.4){\simeq}
}\]

\subsubsection{Comparison of the Artin-Schreier classes}
The $SU$ Artin-Schreier class constructed above is naturally also a $Spin$ Artin-Schreier class. Refering to \cite{Laures02} we have
\begin{lemma}
Let $b$ and $b'$ be two Artin-Schreier elements of $\pi_0 KO\wedge MSpin$. Then there is an $E_{\infty}$ self homotopy equivalence $\kappa$ of $MSpin$ which carries $b$ to $b'$.
\end{lemma}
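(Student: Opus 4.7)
The plan is to reduce the construction of the $E_{\infty}$ self-equivalence $\kappa$ to an algebraic computation in $\pi_0 MSpin$, and then to promote that algebraic datum to a geometric automorphism via the $K(1)$-local splitting of Laures.

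First I would look at the difference $c := b - b' \in \pi_0(KO\wedge MSpin)$. Because both $b$ and $b'$ satisfy the Artin--Schreier relation $\psi^3 x = x + 1$, the constant cancels and $(\psi^3 - 1)(c) = 0$. Smashing the resolution of the $K(1)$-local sphere with $MSpin$ gives the fiber sequence
\[
MSpin \longrightarrow KO\wedge MSpin \xrightarrow{\,\psi^3 - 1\,} KO\wedge MSpin,
\]
and its long exact homotopy sequence, together with the vanishing of $(\psi^3-1)(c)$, lets me lift $c$ to some $\alpha \in \pi_0 MSpin$.

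Next I would invoke Laures's $K(1)$-local $E_{\infty}$ splitting $MSpin \simeq T_{\zeta}\wedge\bigwedge_{i=1}^{\infty} TS^0$. Since the smash factors $TS^0$ are free $E_{\infty}$ algebras on $S^0$, an $E_{\infty}$ self-map of $MSpin$ that is the identity on the $T_{\zeta}$ factor is specified uniquely by where it sends each free generator $\iota_i\in\pi_0 TS^0\subset\pi_0 MSpin$. I would define $\kappa$ to fix the $T_\zeta$ summand and the generators $\iota_i$ for $i\ge 2$, and to send $\iota_1 \mapsto \iota_1 + \alpha$. Because adding an element of $\pi_0 MSpin$ to a free generator is invertible (the inverse shifts by $-\alpha$), $\kappa$ is an $E_{\infty}$ self-equivalence. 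The verification $\kappa_*(b) = b'$ is then immediate: after applying the unit $MSpin \to KO\wedge MSpin$ the shift by $\alpha$ becomes the shift by $c = b - b'$, so $\kappa_* b = b + (b'-b) = b'$ in $\pi_0(KO\wedge MSpin)$.

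The genuine difficulty is not the algebra but the geometric realization step: one must be certain that a shift in a single free generator genuinely extends to an $E_{\infty}$ ring endomorphism of the whole smash product, and that the resulting automorphism acts on $\pi_0(KO \wedge MSpin)$ by adding the $KO$-image of $\alpha$. This is where the $\theta$-algebra picture of $\pi_0$ of a $K(1)$-local $E_{\infty}$ ring spectrum and the universal property of the free $E_{\infty}$ functor $T$ do the heavy lifting, and I would appeal to \cite{Laures02} for the fact that such translations give honest $E_{\infty}$ automorphisms of $MSpin$.
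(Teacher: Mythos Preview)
Your first paragraph is fine and matches the paper: the difference $b-b'$ is killed by $\psi^3-1$ and hence lifts to a class $a\in\pi_0 MSpin$ via the fiber sequence. The gap is in your construction of $\kappa$. You fix the $T_\zeta$ factor and instead shift the free generator $\iota_1$ of one $TS^0$ by $\alpha$. But the Artin--Schreier datum is attached to $T_\zeta$, not to the free factors: by the very universal property of $T_\zeta$ as the $E_\infty$ pushout killing $\zeta$, an $E_\infty$ map $T_\zeta\to R$ is the same as a nullhomotopy of $\zeta$ in $R$, which via the fiber sequence $R\to KO\wedge R\xrightarrow{\psi^3-1}KO\wedge R$ is precisely an Artin--Schreier element of $KO_0 R$. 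Since your $\kappa$ leaves the composite $T_\zeta\to MSpin$ untouched, the induced map $\kappa_*$ on $KO_0 MSpin$ fixes everything in the image of $KO_0 T_\zeta$, in particular the canonical Artin--Schreier element coming from the splitting. Your verification ``$\kappa_* b = b+(b'-b)$'' conflates the ring homomorphism $\kappa_*$ with a global translation: $\kappa_*$ sends the single generator $\iota_1$ to $\iota_1+\alpha$, but it has no reason to add $\alpha$ to an arbitrary element, and it certainly does not move $b$ if $b$ happens to lie in the $T_\zeta$ part.

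The paper does the opposite: $\kappa$ is the identity on every $TS^0$ and is modified only on $T_\zeta$, where it is the $E_\infty$ map determined (via the universal property) by $\iota+a\delta\colon C_\zeta\to MSpin$, with $\iota$ the original inclusion and $\delta\colon C_\zeta\to S^0$ the collapse to the top cell. Under the correspondence between maps out of $T_\zeta$ and Artin--Schreier elements, this shifts the Artin--Schreier class by exactly $a$; the inverse is built the same way with $-a$ in place of $a$. So the repair is not a matter of more bookkeeping but of moving your perturbation from a $TS^0$ factor to the $T_\zeta$ factor.
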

\begin{proof}
The short exact sequence
$$ 0\rightarrow \pi_0 MSpin \rightarrow \pi_0 KO\wedge MSpin \stackrel{\psi^3-1}{\rightarrow} \pi_0 KO\wedge MSpin \rightarrow 0$$
with $(\psi^3-1)b=(\psi^3-1)b'=1$ tells us that $b$ and $b'$ can only differ by a class $a\in\pi_0MSpin$. Let $\kappa$ be the $E_{\infty}$ map of $$MSpin\cong T_{\zeta}\wedge \bigwedge TS^0$$ 
which is the identity on each $TS^0$ and restricts to
$$\iota+a\delta: C_{\zeta}\rightarrow MSpin$$
on $T_{\zeta}$. Then its inverse is defined in the same way with $a$ replaced by $-a$.
\end{proof}
With the notations $T_{\zeta}^{SU}$ and $T_{\zeta}^{Spin}$ for the $E_{\infty}$ spectra we get from the different Artin-Schreier classes, we have the following diagram with $E_{\infty}$ maps:
\[
\xymatrix@R-.3cm@C-.2cm@M+.1cm
{ T_{\zeta}^{SU}  \ar[r]\ar[d]^{\simeq}   & MSU\ar[dd] \ar@/^2pc/[l] \\ 
T_{\zeta}^{Spin}  \ar[d]^{\iota} \ar@/^1pc/[dr]  & \\
T_{\zeta}\wedge \bigwedge_{i=1}^{\infty} TS^0 \ar@/^1pc/[u]^{(id,*,*,...)} & MSpin \ar[l]_(0.4){\simeq}
}\]

%
%
\section{Detecting free $E_{\infty}$ summands $TS^0$}
\subsection{Introduction to Adams operations in $K$-theory}
\subsubsection{Basics}
In $K$-theory we have not only the ring structure, but also certain ring homomorphisms $\psi^k:K(X)\rightarrow K(X)$.
\begin{theorem}[Adams]
There exist ring homomorphisms $\psi^k: K(X)\rightarrow K(X)$, defined for all compact Hausdorff spaces $X$ and all integers $k\ge 0$, satisfying:
\begin{enumerate}
\item $\psi^k f^*=f^* \psi^k$ for all maps $f:X\rightarrow Y$ (naturality)
\item $\psi^k(L)=L^k$ if $L$ is a line bundle
\item $\psi^k\circ\psi^l=\psi^{kl}$
\item $\psi^p(\alpha)\equiv \alpha^p$ mod $p$ for $p$ prime
\end{enumerate}
\end{theorem}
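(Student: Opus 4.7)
The plan is to construct the Adams operations via Newton polynomials applied to the exterior power operations, which is Adams' original approach. First I would introduce the exterior power operations $\lambda^k(E) = \Lambda^k E$ on complex vector bundles and organize them into the total exterior power $\lambda_t(E) = \sum_{k\ge 0} \lambda^k(E) t^k$, which satisfies $\lambda_t(E\oplus F) = \lambda_t(E)\lambda_t(F)$. This multiplicativity on direct sums extends $\lambda_t$ to a group homomorphism from $K(X)$ into the multiplicative group $1 + tK(X)[\![t]\!]$, so that each $\lambda^k$ becomes a well-defined (non-additive) operation on virtual bundles.

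Next I would invoke the Newton identities, which give universal polynomials $N_k\in\mathbb{Z}[e_1,\ldots,e_k]$ expressing the power sum $p_k=\sum_i x_i^k$ in terms of the elementary symmetric polynomials $e_j$, for example $N_1=e_1$, $N_2=e_1^2-2e_2$, $N_3 = e_1^3 - 3e_1 e_2 + 3e_3$, and so on. I would then \emph{define}
\[
\psi^k(x) := N_k\bigl(\lambda^1(x), \lambda^2(x), \ldots, \lambda^k(x)\bigr) \in K(X).
\]
Naturality (property 1) is then automatic from the naturality of the $\lambda^k$, and property (2) follows immediately: for a line bundle $L$ we have $\lambda^1(L)=L$ and $\lambda^j(L)=0$ for $j\ge 2$, and the single-variable Newton identity gives $N_k(L,0,\ldots,0)=L^k$.

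The remaining properties, along with the ring-homomorphism statement, I would verify using the \emph{splitting principle}: for any vector bundle $E\to X$ there is a map $f:Y\to X$ with $f^\ast$ injective on $K$-theory and $f^\ast E \cong L_1\oplus\cdots\oplus L_n$ a sum of line bundles. Pulled back to $Y$, the elementary symmetric polynomials in the $L_i$ are precisely the $\lambda^j(f^\ast E)$, so the Newton identities yield $\psi^k(f^\ast E) = L_1^k + \cdots + L_n^k$. From this formula, additivity $\psi^k(E\oplus F)=\psi^k(E)+\psi^k(F)$ and multiplicativity $\psi^k(E\otimes F)=\psi^k(E)\psi^k(F)$ are routine (the tensor splits as $\bigoplus_{i,j} L_i\otimes M_j$), giving (3) since $\psi^k\psi^l(L_1+\cdots+L_n)=\sum_i L_i^{kl}=\psi^{kl}(\sum_i L_i)$, and (4) from the integral multinomial expansion $(\sum_i L_i)^p = \sum_i L_i^p + p\cdot(\text{integral correction})$.

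The main obstacle I anticipate is not the algebraic identities themselves, which are formal consequences of Newton's identities once everything splits, but rather justifying that the operations $\lambda^k$ extend coherently to virtual bundles and that the splitting principle may legitimately be used to reduce multiplicative identities in $K(X)$ to identities among line bundles in $K(Y)$. Once the $\lambda$-ring formalism is in place and the injectivity of $f^\ast$ is noted, the four stated properties fall out uniformly, and the universal polynomial definition of $\psi^k$ ensures there is nothing to check beyond the sum-of-line-bundles case.
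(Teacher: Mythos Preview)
Your proposal is correct and follows essentially the same route as the paper: define $\psi^k$ via Newton polynomials in the exterior power operations $\lambda^i$, then verify naturality directly and use the splitting principle to reduce additivity, multiplicativity, property (3), and property (4) to the case of sums of line bundles. The only cosmetic difference is that the paper first proves additivity of $\psi^k$ on honest bundles and then extends to virtual bundles by $\psi^k(E_1-E_2)=\psi^k(E_1)-\psi^k(E_2)$, whereas you extend the $\lambda^k$ themselves to $K(X)$ via the group homomorphism $\lambda_t$ into $1+tK(X)[\![t]\!]$; both give the same operations.
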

At first we consider the special case when $E$ is a sum of line bundles $L_i$
$$\psi^k(L_1\oplus...\oplus L_n)=L_1^k + ... + L_n^k.$$
Next we are looking for a general definition of $\psi^k(E)$ which specializes to the above. We use the exterior powers $\lambda^i(E)$ to define
$$\lambda_t(E)=\sum_i \lambda^i(E) t^i\in K(X)[\![t]\!].$$
By naturality of the exterior power construction we have
\begin{eqnarray*}
\lambda_t(E_1\oplus E_2) & = & \sum_i \lambda^i(E_1\oplus E_2) t^i = \sum_i \bigoplus_{k=0}^i (\lambda^k(E_1)\otimes \lambda^{i-k}(E_2)) t^i\\
 & = & (\sum_i \lambda^i(E_1) t^i) \otimes (\sum_k \lambda^k(E_2) t^k) = \lambda_t(E_1)\otimes \lambda_t(E_2).
\end{eqnarray*}
In the case $E=L_1\oplus ... \oplus L_n$ we get
$$\lambda_t(E)=\prod_i \lambda_t(L_i) = \prod_i (1+L_i t)= 1+\sigma_1 + ... + \sigma_n;$$
thus comparing the coefficients, $\lambda^j(E)=\sigma_j(L_1,...,L_n)$ is the $j^{th}$ elementary symmetric polynomial. Hence we take the Newton polynomials $s_k$ and define
$$\psi^k(E)=s_k(\lambda^1(E),...,\lambda^k(E)),$$
which specializes to the formula above when $E$ is a sum of line bundles. Finally we prove that the stated properties are fulfilled.
\begin{proof}
The naturality $$f^*(\psi^k(E))=\psi^k(f^*(E))$$ follows from $f^*(\lambda^i(E))=\lambda^i(f^*(E))$. The additivity $\psi^k(E_1\oplus E_2)=\psi^k(E_1)+\psi^k(E_2)$ is done via the naturality and the splitting principle. Since $p:F(E)\rightarrow X$ induces an injection $p^*: K^*(X)\rightarrow K^*(F(E))$ we get
\begin{eqnarray*}
p^* \psi^k(E_1\oplus E_2) & = & \psi^k p^*(E_1\oplus E_2) = \psi^k(L_1\oplus ... \oplus L_m \oplus L_1'\oplus ... \oplus L_n')\\
  & = & \psi^k (p^*E_1 \oplus p^*E_2)=p^*\psi^kE_1 \oplus p^* \psi^k E_2.
\end{eqnarray*}
Since $\psi^k$ is additive on vector bundles, we get via the Grothendieck construction an additive operation on $K(X)$ defined by
$$\psi^k(E_1-E_2)=\psi^k(E_1)-\psi^k(E_2).$$
Next we prove multiplicativity: If $E$ is the sum of line bundles $L_i$ and $E'$ is the sum of line bundles $L_j'$ then $E\otimes E'$ is the sum of line bundles $L_i\otimes L_j'$ implying
\begin{eqnarray*}
\psi^k(E\otimes E') & = & \sum_{i,j} \psi^k(L_i\otimes L_j)=\sum_{i,j} (L_i\otimes L_j')^k\\
                    & = & \sum_{i,j} L_i^k\otimes L_j'^k=\sum_i L_i^k \sum_j L_j'^k=\psi^k(E)\psi^k(E').
\end{eqnarray*}
Thus $\psi^k$ is multiplicative on vector bundles and it follows formally that it is multiplicative on elements of $K(X)$. For $\psi^k\circ \psi^l=\psi^{kl}$ the splitting principle and additivity reduce to the case of line bundles where
$$\psi^k(\psi^l(L))=L^{kl}=\psi^{kl}(L).$$
Likewise for $\psi^p(\alpha)\equiv \alpha^p$ mod $p$, since for $E=L_1+...+L_n$ we have modulo $p$
$$\psi^p(E)=L_1^p+...+L_n^p\equiv (L_1+...+ L_n)^p=E^p.$$
\end{proof}
\subsubsection{Adams operations on the $K$-theory spectrum}
One recalls that Adams operations $\psi^k:K\rightarrow K$ are operations on the ring spectrum $K$ itself and that we have naturality, i.e.\ for every map $f:X\rightarrow Y$ the diagram
\[\xymatrix@R-.3cm@C-.2cm@M+.1cm{ 
 K\wedge X \ar[d]_{K \wedge f}\ar[r]^{\psi^k\wedge X} & K\wedge X \ar[d]^{K\wedge f}\\
 K\wedge Y \ar[r]^{\psi^k\wedge Y}                    & K\wedge Y 
}\]
commutes. Also Adams operations behave well with respect to the complexification map from $KO$-theory to $K$-theory, i.e.\ the diagram
\[\xymatrix@R-.3cm@C-.2cm@M+.1cm{ 
 X\ar[r]\ar[dr] & KO\wedge X \ar[d]_{c\wedge 1}\ar[r]^{\psi^k\wedge 1} & KO\wedge X \ar[d]^{c\wedge 1}\\
                & K\wedge X \ar[r]^{\psi^k\wedge 1}                    & K\wedge X 
}\]
commutes. Having $u\in K_*X$ we get $\psi^k(u)$ by
\[\xymatrix@R-.3cm@C-.2cm@M+.1cm{ 
S\ar[r]^(0.4){u} \ar@/_2pc/[rr]_{\psi^k(u)} & K\wedge X \ar[r]^{\psi^k \wedge X} & K\wedge X .
}\]
\subsubsection{The Kronecker pairing}
\begin{lemma}
We have $\langle \psi^k a, \psi^k b\rangle = \psi^k \langle a,b\rangle$ and $\psi^{k^{-1}}$ is adjoint to $\psi^k$ via the Kronecker pairing $\langle , \rangle: K^*X\otimes K_*X\rightarrow K_*$.
\end{lemma}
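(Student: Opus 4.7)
The plan is to unwind the Kronecker pairing at the spectrum level and then push the Adams operations through it using the multiplicativity of $\psi^k\colon K\to K$. Recall from the Boardman discussion earlier that for $a\in K^nX$, represented by $a\colon X\to \Sigma^n K$, and $b\in K_mX$, represented by $b\colon S^m\to K\wedge X$, the pairing is the composite
\[
\langle a,b\rangle\colon S^m\stackrel{b}{\longrightarrow} K\wedge X\stackrel{1\wedge a}{\longrightarrow} K\wedge \Sigma^n K\stackrel{\mu}{\longrightarrow} \Sigma^n K.
\]
The crucial input is that $\psi^k$ is a map of ring spectra on $K$, so that $\mu\circ(\psi^k\wedge\psi^k)\simeq \psi^k\circ\mu$.

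For the first identity I would observe that $\psi^k a$ is represented by $\psi^k\circ a$ and $\psi^k b$ by $(\psi^k\wedge 1_X)\circ b$, so $\langle\psi^k a,\psi^k b\rangle$ is the composite
\[
S^m\stackrel{b}{\longrightarrow} K\wedge X\stackrel{\psi^k\wedge 1}{\longrightarrow} K\wedge X\stackrel{1\wedge(\psi^k\circ a)}{\longrightarrow} K\wedge\Sigma^n K\stackrel{\mu}{\longrightarrow}\Sigma^n K.
\]
The interchange law for the smash product lets me rewrite the middle two arrows as $(\psi^k\wedge\psi^k)\circ(1\wedge a)$, and multiplicativity of $\psi^k$ then collapses $\mu\circ(\psi^k\wedge\psi^k)$ to $\psi^k\circ\mu$. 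The full composite becomes $\psi^k\circ\mu\circ(1\wedge a)\circ b = \psi^k\langle a,b\rangle$, as desired.

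For the adjointness I would deduce it formally from the first identity. Since $\psi^k\circ\psi^{k^{-1}}=\psi^1=\mathrm{id}$ (legitimate after $2$-completion, as $k\in\mathbb{Z}_2^\times$ in our $K(1)$-local setting), substituting $b=\psi^k(\psi^{k^{-1}}b)$ into the first identity gives
\[
\langle\psi^k a, b\rangle=\langle\psi^k a,\psi^k(\psi^{k^{-1}}b)\rangle=\psi^k\langle a,\psi^{k^{-1}}b\rangle,
\]
so that modulo the canonical twist by $\psi^k$ on the coefficient ring $K_\ast$, the operation $\psi^{k^{-1}}$ on $K_\ast X$ is adjoint to $\psi^k$ on $K^\ast X$ with respect to the Kronecker pairing.

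The main obstacle is the ring spectrum property $\mu\circ(\psi^k\wedge\psi^k)\simeq\psi^k\circ\mu$: one needs $\psi^k$ not merely multiplicative on homotopy but lifted to an honest self-map of the spectrum $K$ commuting (up to homotopy in $\mathit{SHC}$) with the multiplication. This lift is classical for complex $K$-theory and is precisely what makes the diagram chase above go through.
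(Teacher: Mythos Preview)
Your proof is correct and follows essentially the same route as the paper: both arguments unwind the Kronecker pairing at the spectrum level, use the ring-map property $\mu\circ(\psi^k\wedge\psi^k)\simeq\psi^k\circ\mu$ to obtain $\langle\psi^k a,\psi^k b\rangle=\psi^k\langle a,b\rangle$, and then deduce the adjointness by substituting $b'=\psi^{k^{-1}}b$. If anything, you are slightly more careful than the paper in flagging the residual $\psi^k$-twist on the coefficient ring $K_*$; the paper's version silently passes from $\psi^k\langle a,b'\rangle$ to $\langle a,b'\rangle$, which is only literally true in degree zero (or after the usual normalization), whereas you record this explicitly.
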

\begin{proof}
With $f$ representing $b\in K_*X$ and $g$ representing $a\in K^*X$ we have
\[\xymatrix@R-.3cm@C-.2cm@M+.1cm{ 
S \ar[dr]_{\psi^k(f)} \ar[r]^(.4){f} & K\wedge X \ar[d]^{\psi^k\wedge 1} \ar[r]^{1\wedge g} & 
K\wedge K\ar[d]^{\psi^k\wedge\psi^k}\ar[r]^(.6){\mu} & K\ar[d]^{\psi^k}\\
  & K\wedge X \ar[r]^{1\wedge \psi^k(g)} & K\wedge K \ar[r]^(.6){\mu} & K
}\]  
The line above gives $\langle a,b\rangle$, and the lower line $\langle \psi^k a,\psi^k b\rangle$, giving the first statement. For the second one we know that $\psi^k$ is a ring map, and with $b'=\psi^{k^{-1}} b$ we get
$$\langle \psi^k a, b\rangle=\langle \psi^k a, \psi^k b'\rangle = \psi^k \langle a,b'\rangle= \langle a,b'\rangle = \langle a,\psi^{k^{-1}} b\rangle.$$ 
\end{proof}

\subsection{Calculating operations on $K_*\mathbb{CP}^{\infty}$}
\subsubsection{Adams operations on $K_*\mathbb{CP}^{\infty}$}
As for any complex oriented theory, we have $K^*\mathbb{CP}^{\infty}\cong \pi_*K[\![x]\!] \cong \mathbb{Z}_p[\![x]\!]$ (the last isomorphism is due to the $K(1)$-local point of view). With $\beta_i$ being dual to $x^i$ the statement in homology is $K_*\mathbb{CP}^{\infty}\cong \mathbb{Z}_p\langle \beta_i\rangle$. Next we want to apply the pairing
$$ K^*X\otimes K_*X \rightarrow \pi_* K $$
and show its compatibility with Adams operations. Given $a\in K_*X$ (i.e.\ $S\rightarrow K\wedge X$) and $x\in K^*X$ (i.e.\ $X\rightarrow K$) we have the commuting diagram
\[\xymatrix@R-.3cm@C-.2cm@M+.1cm{ 
S \ar[r]\ar@{.>}[dr]_{\psi^k_*a} & K\wedge X \ar[d]^{\psi^k\wedge 1} & & \\
  & K\wedge X \ar[r]^{1\wedge x} & K\wedge K \ar[r]^(.6){\mu} & K.
}\]
Since $\psi^k$ is a ring homomorphism and $\psi^k(L)=L^k$ for line bundles $L$, we have for the generator $x=1-L$ (here we take the tautological line bundle over $\mathbb{CP}^{\infty}$
$$ \psi^k(x^n)= (1-(1-x)^k)^n.$$
The Kronecker pairing gives us $\langle \psi^k x,a\rangle = \langle x, \psi^{k^{-1}} a\rangle$, the duality was $\langle x^k, \beta_k\rangle =1$, hence basis expansion gives us
$$ \psi^{k^{-1}} \beta_n =\sum_{j\ge 0} \langle x^j, \psi^{k^{-1}}\beta_n\rangle \beta_j $$
with $\langle x^j,\psi^{k^{-1}} \beta_n\rangle = \langle \psi^k x^j,\beta_n\rangle =\langle (1-(1-x)^k)^j,\beta_n\rangle$.
We calculate 
\begin{eqnarray*}
 (1-(1-x)^3)^j &= & x^j(3+x(x-3))^j=x^j\sum_{s=0}^j \binom{j}{s} 3^{j-s} x^s(x-3)^s \\ 
               &= & x^j \sum_{s=0}^j \binom{j}{s}3^{j-s} x^s\sum_{t=0}^s \binom{s}{t}x^t (-3)^{s-t}
\end{eqnarray*}
Hence we have
$$ \langle \psi^3 x^j, \beta_i\rangle = (-1)^{i-j} \sum_{s+t=i-j} \binom{j}{s}\binom{s}{t} 3^{j-t} .$$
Explicitly we have
\begin{eqnarray*}
(3x-3x^2+x^3)^2  &=&   9x^2 - 18x^3 + 15x^4 - 6x^5 + x^6  \\
(3x-3x^2+x^3)^3  &=&  27x^3 - 81x^4 + 108x^5 - 81x^6 + 36x^7 - 9x^8 + x^9   \\
(3x-3x^2+x^3)^4  &=&  81x^4 - 324x^5 + 594x^6 - 648x^7 + 459x^8 - 216x^9 + 66x^{10}\\
& & -12x^{11} + x^{12}   \\
(3x-3x^2+x^3)^5  &=&   243x^5 - 1215x^6 + 2835x^7 - 4050x^8 + 3915x^9 - 2673x^{10} \\
& & +1305x^{11} - 450x^{12} + 105x^{13} - 15x^{14} + x^{15}  \\
(3x-3x^2+x^3)^6  &=&   729x^6 - 4374x^7 + 12393x^8 - 21870x^9 + 26730x^{10} - 23814x^{11}\\
& & +15849x^{12} - 7938x^{13} + 2970x^{14} - 810x^{15} + 153x^{16} - 18x^{17} +x^{18}  \\
(3x-3x^2+x^3)^7  &=&  2187x^7 - 15309x^8 + 51030x^9 - 107163x^{10} + 158193x^{11} \\ 
& &-173502x^{12} + 145719x^{13} - 95175x^{14} + 48573x^{15} - 19278x^{16}  \\
& & +5859x^{17}- 1323x^{18}+ 210x^{19} - 21x^{20} + x^{21}   \\
(3x-3x^2+x^3)^8  &=& 6561x^8 - 52488x^9 + 201204x^{10} - 489888x^{11} + 847098x^{12}\\
& & -1102248x^{13} + 1115856x^{14} - 896184x^{15} + 576963x^{16} - 298728x^{17}\\
& & +123984x^{18} - 40824x^{19} + 10458x^{20} - 2016x^{21}\\
& & + 276x^{22} - 24x^{23} +x^{24}\\
(3x-3x^2+x^3)^9  &=&  19683x^9 - 177147x^{10} + 767637x^{11} - 2125764x^{12}+ 4212162x^{13}\\
& & - 6337926x^{14} + 7501410x^{15} - 7138368x^{16} + 5535297x^{17}\\
& & - 3523257x^{18}+ 1845099x^{19} - 793152x^{20} + 277830x^{21} - 78246x^{22}\\
& & + 17334x^{23} -2916x^{24} + 351x^{25} - 27x^{26} + x^{27}   \\
(3x-3x^2+x^3)^{10}  &=&  59049x^{10} - 590490x^{11} + 2854035x^{12} - 8857350x^{13} + 19781415x^{14}\\
& & -33776028x^{15} + 45730170x^{16} - 50257260x^{17} + 45522405x^{18}\\
& & -34314030x^{19} + 21640365x^{20} - 11438010x^{21} + 5058045x^{22}\\
& & -1861380x^{23} + 564570x^{24} - 138996x^{25} + 27135x^{26}\\
& & - 4050x^{27} +435x^{28} - 30x^{29} + x^{30}  .
\end{eqnarray*}
Ordering the terms we get
\begin{eqnarray*}
\psi^{3^{-1}} \beta_1 &=& 3\beta_1\\
\psi^{3^{-1}} \beta_2 &=& -3\beta_1+9\beta_2\\
\psi^{3^{-1}} \beta_3 &=& \beta_1-18\beta_2+27\beta_3\\
\psi^{3^{-1}} \beta_4 &=& 15\beta_2-81\beta_3+81\beta_4\\
\psi^{3^{-1}} \beta_5 &=& -6\beta_2+108\beta_3-324\beta_4+243\beta_5\\
\psi^{3^{-1}} \beta_6 &=& \beta_2-81\beta_3+594\beta_4-1215\beta_5+729\beta_6\\
\psi^{3^{-1}} \beta_7 &=& 36\beta_3-648\beta_4+2835\beta_5-4374\beta_6+2187\beta_7\\
\psi^{3^{-1}} \beta_8 &=& -9\beta_3+459\beta_4-4050\beta_5+12393\beta_6-15309\beta_7+6561\beta_8\\
\psi^{3^{-1}} \beta_9 &=& \beta_3-216\beta_4+3915\beta_5-21870\beta_6+51030\beta_7-52488\beta_8+19683\beta_9\\
\psi^{3^{-1}} \beta_{10} &=& 66\beta_4-2673\beta_5+26730\beta_6-107163\beta_7+201204\beta_8-177147\beta_9+59049\beta_{10}.
\end{eqnarray*}

\subsubsection{Adams operations on $K_*(\mathbb{CP}^{\infty}\times\mathbb{CP}^{\infty})$}
Analogously we have 
$$K^*(\mathbb{CP}^{\infty}\times\mathbb{CP}^{\infty})\cong \pi_*K[\![x,y]\!]\cong\mathbb{Z}_p[\![x,y]\!]$$ 
and with $\beta_i\otimes \beta_j$ being dual to $x^iy^j$, we have
$$K_*(\mathbb{CP}^{\infty}\times\mathbb{CP}^{\infty})\cong \mathbb{Z}_p\langle \beta_i\otimes \beta_j\rangle.$$
Now the pairing above gives us
\begin{eqnarray*}
\langle x^iy^j,\psi^{k^{-1}} (\beta_m\otimes \beta_n)\rangle & = & \langle \psi^k x^i \psi^k y^j, \beta_m\otimes \beta_n\rangle \\
  & = & \langle (1-(1-x)^k)^i(1-(1-y)^k)^j, \beta_m\otimes \beta_n\rangle 
\end{eqnarray*}
and by basis expansion we have
$$ \psi^{k^{-1}} (\beta_i\otimes \beta_j) = \sum_{m,n} \langle x^my^n,\psi^{k^{-1}} (\beta_i\otimes \beta_j)\rangle \beta_m\otimes \beta_n.$$
With the map $\mathbb{CP}^{\infty}\times \mathbb{CP}^{\infty} \stackrel{f}{\rightarrow} BSU$ classifying the virtual $SU$ bundle $(1-L_1)(1-~L_2)$, the module generators $\beta_i\otimes \beta_j$ are mapped by $f_*$ to algebra generators in $K_*BSU$. By naturality of the Adams operations this allows us to calculate the Adams operations on $K_*BSU$.
We can calculate the pairing factor by factor:
\begin{eqnarray*}
\psi^{3^{-1}} \beta_i\otimes \beta_j &=& \sum_{m,n} \langle \psi^3 (x^my^n), \beta_i\otimes \beta_j\rangle \beta_m\otimes \beta_n\\
 & = & \sum_{m,n} \langle \psi^3 x^m,\beta_i\rangle \langle \psi^3 y^n,\beta_j\rangle \beta_m\otimes\beta_n\\
 & = & \sum_m \langle\psi^3x^m,\beta_i\rangle \beta_m \bigotimes \sum_n\langle \psi^3y^n,\beta_j\rangle \beta_n\\
 & = & (\psi^{3^{-1}}\beta_i)\otimes (\psi^{3^{-1}}\beta_j)
\end{eqnarray*}

The following table contains the mod 2 coefficients $a_k$ of $\psi^{3^{-1}}\beta_i=\sum a_k \beta_k$.
\begin{center}
\begin{tabular}{|r|rrrrrrrrrr|}
\hline
 & $\beta_1$ & $\beta_2$ & $\beta_3$ & $\beta_4$ & $\beta_5$ & $\beta_6$ & $\beta_7$ & $\beta_8$ & $\beta_9$ & $\beta_{10}$ \\
 \hline                        
$\psi^{3^{-1}} \beta_1$ = &     1& & & & & & & & &       \\ 
$\psi^{3^{-1}} \beta_2$ = &     1&1& & & & & & & &       \\ 
$\psi^{3^{-1}} \beta_3$ = &     1& &1& & & & & & &       \\ 
$\psi^{3^{-1}} \beta_4$ = &      &1&1&1& & & & & &       \\ 
$\psi^{3^{-1}} \beta_5$ = &      & & & &1& & & & &       \\
$\psi^{3^{-1}} \beta_6$ = &      &1&1& &1&1& & & &       \\
$\psi^{3^{-1}} \beta_7$ = &      & & & &1& &1& & &       \\
$\psi^{3^{-1}} \beta_8$ = &      & &1&1& &1&1&1& &       \\  
$\psi^{3^{-1}} \beta_9$ = &      & &1& &1& & & &1&       \\
$\psi^{3^{-1}} \beta_{10}$ = &   & & & &1& &1& &1&1      \\
\hline
\end{tabular}
\end{center}

\subsubsection{Integral Adams operations on $K_*BSU$ and 2-structures}
First we recall from \cite{Laures02} the structure of $K_*BSU$: Let $L$ be the tautological line bundle over $\mathbb{CP}^{\infty}$ and $\beta_i\in K_{2i}\mathbb{CP}^{\infty}$ be dual to $c_1(L)^i$. As above let $f$ be the map $\mathbb{CP}^{\infty}\times \mathbb{CP}^{\infty}\rightarrow BSU$ that classifies the bundle $(1-L_1)(1-L_2)$ with $L_i$ the tautological line bundle over the $i^{th}$ factor. Now choose for each natural number $k$ and $1\le i\le k-1$ integers $n_k^i$ such that
$$ \sum_{i=1}^{k-1} n_k^i \binom{k}{i} = gcd\{\binom{k}{1},..,\binom{k}{k-1}\}.$$
Defining elements $d_k=\sum_{i=1}^{k-1} n_k^i f_*(\beta_i\otimes \beta_{k-i})\in K_{2k}BSU$ we get
$$ K_*BSU\cong \mathbb{Z}_2[d_2,d_3,...]. $$
With the notation $a_{ij}=f_*(\beta_i\otimes \beta_j)\in \pi_{2(i+j)} K\wedge BSU$ and the fact that $$K^*(\mathbb{CP}^{\infty}\times\mathbb{CP}^{\infty})\cong \mathbb{Z}_2[\![x,y]\!]$$ 
with $\beta_i\otimes \beta_j\in K_*(\mathbb{CP}^{\infty}\times \mathbb{CP}^{\infty}) $ being dual to $x^iy^j$, we calculate the Adams operations by applying basis expansion:
$$ \psi^k a_{ij} = \sum_{m,n} \langle x^my^n,\psi^k a_{ij}\rangle a_{mn}.$$
Here again the Kronecker pairing gives
\begin{eqnarray*}
\langle x^my^n, \psi^k f_*(\beta_i\otimes \beta_j)\rangle &=& \langle \psi^{k^{-1}} f^*(x^my^n),\beta_i\otimes \beta_j\rangle\\
& = & \langle \psi^{k^{-1}} (x^my^n),\beta_i\otimes \beta_j\rangle\\
& = & \langle \psi^{k^{-1}} x^m,\beta_i\rangle \cdot \langle \psi^{k^{-1}}y^n,\beta_j\rangle .
\end{eqnarray*}
It is easily seen that
$$ gcd\{\binom{k}{1},...,\binom{k}{k-1}\}=\begin{cases} p \quad \text{ for } k=p^s\\ 1 \quad \text{ else. }\end{cases}$$
The generators $a_{ij}$ defined above satisfy certain relations which we want to describe in the following.
\begin{definition} The binomial coefficients associated to the formal group law $F$
$$ \binom{k}{i,j}_F \in \pi_{2(i+j-k)} E $$
are defined by the equation
$$ (x +_F y)^k = \sum_{i,j} \binom{k}{i,j}_F x^i y^j .$$
\end{definition}
\begin{example}[$K$-theory and the multiplicative formal group law]
For $E=K$ and $F=\hat{\mathbb{G}}_m$ with $\hat{\mathbb{G}}_m(x,y)=x+y-v^{-1} xy$ we have
$$ (x+_{\hat{\mathbb{G}}_m} y)^k = (x+y-v^{-1} xy)^k=\sum_{s=0}^k \sum_{t=0}^s \binom{k}{s} \binom{s}{t} (-v)^{s-k}x^{k-s+t} y^{k-t}$$
and hence
$$ \binom{k}{i,j}_{\hat{\mathbb{G}}_m} = \binom{k}{2k-i-j}\binom{2k-i-j}{k-j} (-v)^{k-i-j}.$$
\end{example}
\begin{lemma}[Laures]
The following relations hold for $i,j,k$:
\begin{eqnarray*}
a_{0,0}=1 & ; & a_{0i}=a_{i0}=0 \text{ for all } i\neq 0\\
    a_{ij}& = & a_{ji}\\
\sum_{l,s,t} \binom{l}{s,t}_{\hat{G}_m} a_{j-s,k-t}a_{il} & = & \sum_{l,s,t} \binom{l}{s,t}_{\hat{G}_m} a_{lk}a_{i-s,j-t}.
\end{eqnarray*}
\end{lemma}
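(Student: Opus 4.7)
The plan is to derive the first three identities from the geometric definition of $f\colon\mathbb{CP}^{\infty}\times\mathbb{CP}^{\infty}\to BSU$, and then to establish the associativity-type relation by exhibiting two explicit maps $(\mathbb{CP}^{\infty})^3\to BSU$ that classify the same virtual $SU$-bundle, and then pushing forward in $K$-homology.

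The first three identities are direct. The identity $a_{00}=1$ holds because $f$ sends the basepoint $(*,*)$---where $(1-L_1)(1-L_2)$ is the zero bundle---to the basepoint of $BSU$, giving $f_*(\beta_0\otimes\beta_0)=f_*(1)=1\in K_0 BSU$. For $a_{0i}=0$ with $i\neq 0$, the restriction of $(1-L_1)(1-L_2)$ to $\{*\}\times\mathbb{CP}^{\infty}$ is the zero bundle, so $f|_{\{*\}\times\mathbb{CP}^{\infty}}$ is null-homotopic and annihilates the reduced classes $\beta_i$ for $i\geq 1$; symmetrically for $a_{i0}$. The symmetry $a_{ij}=a_{ji}$ follows from the invariance of $(1-L_1)(1-L_2)$ under the coordinate swap of the two $\mathbb{CP}^{\infty}$ factors.

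For the main relation, let $\mu\colon\mathbb{CP}^{\infty}\times\mathbb{CP}^{\infty}\to\mathbb{CP}^{\infty}$ denote the tensor product of line bundles ($H$-space structure on $\mathbb{CP}^{\infty}=BS^1$) and $\oplus$ the Whitney sum on $BSU$. Consider the two maps
\[
\phi_1 := f\circ\mathrm{pr}_{12}\,\oplus\,f\circ(\mu\times\mathrm{id}), \qquad \phi_2 := f\circ\mathrm{pr}_{23}\,\oplus\,f\circ(\mathrm{id}\times\mu),
\]
from $(\mathbb{CP}^{\infty})^3$ to $BSU$. Each summand has vanishing $c_1$ (a product of two virtual rank-zero bundles), so both maps factor through $BSU$, and a direct expansion shows that $\phi_1$ and $\phi_2$ both classify the virtual bundle $2-L_1-L_2-L_3+L_1L_2L_3$. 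Hence $\phi_1\simeq\phi_2$.

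The last step is to evaluate $(\phi_1)_*$ and $(\phi_2)_*$ on $\beta_i\otimes\beta_j\otimes\beta_k$ and equate, using four inputs: (i) the $H$-space product on $BSU$ makes $K_*BSU$ a commutative ring, with $(g_1\oplus g_2)_*(w)=\sum g_{1*}(w_{(1)})\cdot g_{2*}(w_{(2)})$ for the Hopf coproduct $\psi(w)=\sum w_{(1)}\otimes w_{(2)}$; (ii) $\psi(\beta_n)=\sum_{a+b=n}\beta_a\otimes\beta_b$ on $K_*\mathbb{CP}^{\infty}$; (iii) projections pushforward via the augmentation $\epsilon(\beta_n)=\delta_{n,0}$; and (iv) dually to $\mu^*(x^l)=(x+_{\hat{G}_m}y)^l$, one has $\mu_*(\beta_s\otimes\beta_t)=\sum_l\binom{l}{s,t}_{\hat{G}_m}\beta_l$. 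Expanding the coproduct on $K_*((\mathbb{CP}^{\infty})^3)$ and assembling yields
\[
(\phi_1)_*(\beta_i\otimes\beta_j\otimes\beta_k)=\sum_{l,s,t}\binom{l}{s,t}_{\hat{G}_m}a_{i-s,j-t}\,a_{l,k},
\]
\[
(\phi_2)_*(\beta_i\otimes\beta_j\otimes\beta_k)=\sum_{l,s,t}\binom{l}{s,t}_{\hat{G}_m}a_{j-s,k-t}\,a_{i,l}.
\]
Equating these (using commutativity of the product on $K_*BSU$) gives the claimed identity. The real work, and the main obstacle, is the bookkeeping: one must carefully track the coproduct, the projections, and the formal-group-law expansion so that the free summation indices $(l,s,t)$ land in exactly the positions $a_{j-s,k-t}a_{i,l}$ on one side and $a_{l,k}a_{i-s,j-t}$ on the other.
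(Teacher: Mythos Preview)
Your argument is correct. The bundle identity $(1-L_1)(1-L_2)+(1-L_1L_2)(1-L_3)=(1-L_2)(1-L_3)+(1-L_1)(1-L_2L_3)=2-L_1-L_2-L_3+L_1L_2L_3$ is exactly the right geometric input, and your bookkeeping with the coproduct, the augmentation on the projected-away factor, and the dual formula $\mu_*(\beta_s\otimes\beta_t)=\sum_l\binom{l}{s,t}_{\hat{G}_m}\beta_l$ checks out and lands the indices where they should be.

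The paper does not actually prove this lemma; it attributes it to Laures and instead records the equivalent statement in cohomological packaging: the classifying map, viewed as a power series $f(x,y)=1+\sum a_{ij}x^iy^j$ in $(K\wedge BSU)^0(\mathbb{CP}^\infty\times\mathbb{CP}^\infty)$, is a \emph{2-structure} in the sense of Ando--Hopkins--Strickland, i.e.\ satisfies $f(x,0)=1$, $f(x,y)=f(y,x)$, and the cocycle identity $f(x,y)\,f(x+_{\hat G_m}y,z)=f(x,y+_{\hat G_m}z)\,f(y,z)$. Expanding that last identity in powers of $x,y,z$ and comparing coefficients yields precisely the relation among the $a_{ij}$. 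Your proof is the homological dual of this: the same bundle identity underlies both, but you push forward basis elements along $\phi_1\simeq\phi_2$ rather than pull back monomials. The cohomological version has the advantage of being a clean closed-form functional equation (and connects to the general theory of cubical structures), while your version is more hands-on and makes the role of the Hopf algebra structure on $K_*BSU$ explicit.
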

For our calculations we choose the following non-vanishing coefficients $n_k^i$ for our basis elements $d_k$:
\begin{center}
\begin{tabular}{|r|rrrrrrrrrr|}
\hline
k $\backslash$ i & 1 & 2 & 3 & 4 & 5 & 6 & 7 & 8 & 9 & 10\\
\hline
2    & 1 &   &   &   &   &   &   &   &   &   \\
3    & 1 &   &   &   &   &   &   &   &   &   \\
4    & -1& 1 &   &   &   &   &   &   &   &   \\
5    & 1 &   &   &   &   &   &   &   &   &   \\
6    & 1 & 1 & -1&   &   &   &   &   &   &   \\
7    & 1 &   &   &   &   &   &   &   &   &   \\
8    & 9 &   &   & -1&   &   &   &   &   &   \\
9    & -9&   & 1 &   &   &   &   &   &   &   \\
10   & 1 & 11&   &   & -2&   &   &   &   &   \\
\hline
\end{tabular}
\end{center}
\begin{eqnarray*}
\psi^{3^{-1}} d_2 &=& \psi^{3^{-1}} f_*(\beta_1\otimes\beta_1)= f_*(\psi^{3^{-1}} (\beta_1\otimes \beta_1))\\
                  &=& f_*(3\beta_1\otimes 3\beta_1)=9 f_*(\beta_1\otimes \beta_1)=9 d_2\\
\psi^{3^{-1}} d_3 &=& f_*(\psi^{3^{-1}} \beta_1\otimes \beta_2)= f_*(3\beta_1\otimes (-3\beta_1+9\beta_2))\\
                  &=& 27d_3-9d_2\\
\psi^{3^{-1}} d_4 &=& \psi^{3^{-1}}(-f_*(\beta_1\otimes\beta_3)+f_*(\beta_2\otimes \beta_2))\\
                  &=& -f_*(3\beta_1\otimes (\beta_1-18\beta_2+27\beta_3))+f_*((-3\beta_1+9\beta_2)\otimes (-3\beta_1+9\beta_2))\\
                  &=& -9a_{11}+54a_{12}-81a_{13} + 9 a_{11}-54a_{12}+81a_{22}=81(-a_{13}+a_{22})=81d_4.
\end{eqnarray*}
To calculate the Adams operation on the higher $d_k$, one has to invest the 2-structure condition from above. An equivalent way to handle this is to see $K\wedge BSU_+$ as a complex oriented ring theory with complex orientation $x_{K\wedge BSU_*}=(1\wedge\eta)_* x_K$. The classifying map
$$ (\mathbb{CP}^{\infty}\times\mathbb{CP}^{\infty})_*\stackrel{f_+}{\rightarrow} BSU_+ \stackrel{\eta\wedge 1}{\rightarrow} K\wedge BSU_+$$
can be regarded as a power series
$$ f(x,y)=1+\sum_{i,j\ge 1} b_{ij} x^iy^j \in (K\wedge BSU)^0(\mathbb{CP}^{\infty}\times\mathbb{CP}^{\infty})$$
for some $b_{ij}\in K_{2(i+j)}BSU$. Indeed we have $b_{ij}=a_{ij}$ (compare \cite{Laures02})
\begin{eqnarray*}
b_{ij} &=& \sum_{k,l\ge 1} b_{ij} (1\wedge \eta)_* \langle \beta_i\otimes \beta_j, x^ky^l\rangle \\
       &=& \langle (1\wedge \eta)_*\beta_i \otimes (1\wedge \eta)_* \beta_j, 1+\sum_{k,l\ge 1} b_{kl} x^ky^l\rangle\\
       &=& \langle (1\wedge \eta)_* (\beta_i\otimes \beta_j), f^*(\eta\wedge 1)\rangle\\
       &=& (\mu f(1\wedge \eta))_* (\beta_i\otimes \beta_j)=a_{ij}.
\end{eqnarray*}
\begin{lemma}
For the power series $f(x,y)$ above, the following is straightforward to check:
\begin{eqnarray*}
f(x,0) &=& f(0,y)=1\\
f(x,y) &=& f(y,x) \\
f(x,y)f(x+_{\hat{G}_m} y) &=& f(x+_{\hat{G}_m} y,z)f(y,z).
\end{eqnarray*}
In the sense of \cite{AndoHopkinsStrickland} such an $f$ is called a 2-structure.
\end{lemma}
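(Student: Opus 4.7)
The plan is to read each identity back through the geometric definition of $f$. Recall that $f(x,y) \in (K\wedge BSU)^{0}(\mathbb{CP}^{\infty}\times \mathbb{CP}^{\infty})$ is defined as the composite of the classifying map $\varphi\colon \mathbb{CP}^{\infty}\times \mathbb{CP}^{\infty} \to BSU$ of the virtual bundle $(1-L_{1})(1-L_{2})$ with the unit $\eta\wedge 1\colon BSU_{+}\to K\wedge BSU_{+}$. In each of the three identities the strategy is the same: find a geometric identity of virtual $SU$-bundles (really, of maps into $BSU$), apply $\eta\wedge 1$, and read off the resulting equality of power series in the coordinates $x_{i}=1-L_{i}$.

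For the normalisation $f(x,0)=f(0,y)=1$, I would restrict along $\mathbb{CP}^{\infty}\times\{*\}\hookrightarrow \mathbb{CP}^{\infty}\times\mathbb{CP}^{\infty}$. On this subspace the second line bundle is trivial, so $(1-L_{1})(1-L_{2})$ restricts to the zero virtual bundle; the classifying map to $BSU$ therefore factors through the basepoint, and after smashing with $K$ one picks up only the unit class $1 \in K^{0}(\mathbb{CP}^{\infty})$. Symmetry in the two factors is an even shorter check: the swap involution $\sigma\colon \mathbb{CP}^{\infty}\times\mathbb{CP}^{\infty}\to \mathbb{CP}^{\infty}\times\mathbb{CP}^{\infty}$ pulls $(1-L_{1})(1-L_{2})$ back to $(1-L_{2})(1-L_{1})$, which is the same virtual bundle; hence $\sigma^{*}f = f$, which in coordinates is $f(y,x)=f(x,y)$.

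The cocycle identity (interpreted as the intended $f(x,y)\,f(x+_{\hat{G}_{m}}y,z) = f(x,y+_{\hat{G}_{m}}z)\,f(y,z)$) is the real content, and the key is the virtual-bundle identity on $(\mathbb{CP}^{\infty})^{3}$
\[
(1-L_{1})(1-L_{2}) \;+\; (1-L_{1}L_{2})(1-L_{3}) \;=\; (1-L_{1})(1-L_{2}L_{3}) \;+\; (1-L_{2})(1-L_{3}),
\]
which one verifies by expanding both sides to $2 - L_{1}-L_{2}-L_{3} + L_{1}L_{2}L_{3}$. Because $\varphi$ is compatible with Whitney sum (the H-space structure of $BSU$) and because tensor product of line bundles corresponds under the orientation $x = 1-L$ to addition via the multiplicative formal group law $\hat{G}_{m}$, applying $\eta\wedge 1$ to this identity of classifying maps and reading the resulting equality in the coordinates $x_{i}=1-L_{i}$ yields exactly the stated multiplicative cocycle equation.

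The main obstacle I anticipate is bookkeeping rather than content: one must keep straight that the product of classifying maps in $(K\wedge BSU)^{0}$ corresponds to Whitney sum in $BSU$ (not tensor product), and that the substitution rule $(x,y)\mapsto (x+_{\hat{G}_{m}}y,z)$ on power series is induced by the map $\mathbb{CP}^{\infty}\times\mathbb{CP}^{\infty}\to \mathbb{CP}^{\infty}$ classifying $L_{1}\otimes L_{2}$. Once those two dictionary entries are in place, each of the three items reduces to the elementary algebraic identity exhibited above, which is indeed "straightforward to check".
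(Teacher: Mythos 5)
Your proof is correct, and it is essentially the argument the paper leaves implicit: the lemma is stated with no proof at all (``straightforward to check'', deferring to the references of Laures and Ando--Hopkins--Strickland), and the intended justification is exactly the geometric one you give --- restriction to either factor kills the virtual bundle $(1-L_1)(1-L_2)$ so the restricted class is the unit, the swap map fixes the bundle so $f$ is symmetric, and the cocycle identity follows from the bundle identity $(1-L_1)(1-L_2)+(1-L_1L_2)(1-L_3)=(1-L_1)(1-L_2L_3)+(1-L_2)(1-L_3)$ together with the two dictionary facts you isolate, namely that the product in $(K\wedge BSU)^0$ is induced by the Whitney-sum H-space structure of $BSU$ and that $x+_{\hat{G}_m}y$ is the Euler class $1-L_1L_2$ of $L_1\otimes L_2$ in the orientation $x=1-L$. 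You also correctly repaired the misprint in the statement: as printed the third identity is garbled (a missing argument and a wrong slot), and the intended condition is $f(x,y)\,f(x+_{\hat{G}_m}y,z)=f(x,y+_{\hat{G}_m}z)\,f(y,z)$, which is exactly what your argument establishes.
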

Comparison of the coefficients of $x^2yz$ gives the relation
$$ a_{21}+2a_{22}=a_{11}^2 +a_{31}.$$
With this we calculate
\begin{eqnarray*}
\psi^{3^{-1}} d_5 &=& \psi^{3^{-1}} f_*(\beta_1\otimes \beta_4)=f_*(3\beta_1\otimes (15\beta_2-81\beta_3+81\beta_4))\\
                  &=& 243d_5-243a_{13}+45a_{12}\\
                  &=& 243d_5-243a_{13}+45a_{12} +243(2a_{22}+a_{21}-a_{11}^2-a_{31})\\
                  &=& 243d_5+486(-a_{13}+a_{22})+288a_{12}-243a_{11}^2\\
                  &=& 243d_5+486d_4+288d_3-243d_2^2.
\end{eqnarray*}
To calculate further operations we need some additional relations:
\begin{center}
\begin{tabular}{|c|c|}
\hline
coefficient of & relation \\
\hline
$x^2yz$        & $2a_{22}-a_{31}+a_{21}+a_{11}^2$\\
$x^3yz$        & $2a_{14}+a_{11}a_{12}-a_{13}-a_{23}$\\
$x^2y^2z$      & $6a_{14}-6a_{13}+2a_{22}-a_{11}^2-a_{11}$\\
$x^3yz^2$      & $3a_{33}-2a_{23}+a_{11}a_{13}-a_{11}a_{22}-a_{12}^2$\\
$x^4yz$        & $5a_{15}-2a_{24}-3a_{14}+2a_{11}a_{13}+a_{12}^2$\\
\hline
\end{tabular}
\end{center}
With this we compute:
\begin{eqnarray*}
\psi^{3^{-1}} d_6 &=& \psi^{3^{-1}} (f_*(\beta_1\otimes\beta_5)+f_*(\beta_2\otimes\beta_4)-f_*(\beta_3\otimes\beta_3))\\
                  &=& f_*(3\beta_1\otimes (-6\beta_2+108\beta_3-324\beta_4+243\beta_5))\\
                  & & +f_*((-3\beta_1+9\beta_2)\otimes(15\beta_2-81\beta_3+81\beta_4))\\
                  & & -f_*((\beta_1-18\beta_2+27\beta_3)\otimes (\beta_1-18\beta_2+27\beta_3))\\
                  &=& 729d_6 -1215a_{14}+243a_{23}+513a_{13}-189a_{22}-27a_{12}-a_{11}\\
                  & & [+243(2a_{14}-a_{23}+a_{11}a_{12}-a_{13})]\\
                  &=& 729d_6-729d_5+270a_{13}-189a_{22}+243a_{11}a_{12}-27a_{12}-a_{11}\\
                  & & [-81(-a_{13}+2a_{22}+a_{12}+a_{11}^2)]\\
                  &=& 729d_6-729d_5-351d_4+243d_2d_3-108d_3-81d_2^2-d_2.
\end{eqnarray*}
Analogously we can go on 
\begin{eqnarray*}
\psi^{3^{-1}} d_7 &=& \psi^{3^{-1}} f_*(\beta_1\otimes \beta_6)=3^7 d_7 +3a_{12}-243a_{13}+1782 a_{14}-3645a_{15}
\end{eqnarray*}
and reduce the term above to polynomials in the $d_k$.

\subsection{Bott's formula and cannibalistic classes}
Due to Bott \cite{Bot69}, one can calculate Adams operations on the Thom space by calculating them on the base space, multiplying with the cannibalistic class $\theta_k(E)$ and applying the Thom isomorphism. Here the Thom space is constructed with respect to the bundle $E$. Let
$$ i_!:K(X)\rightarrow \tilde{K}(X^E)$$
denote the Thom isomorphism; then we have
$$ \psi^k (i_! x) = i_! \theta_k(E)\psi^k(x)$$
for $x\in K(X)$.

\subsubsection{Stable cannibalistic classes in $K$}
In order to calculate Adams operations on $K_*MSU$ using the formula above, one has to calculate cannibalistic classes. These classes are introduced by Bott in \cite{Bot69} and are defined for complex vector bundles over compact spaces $X$ and are characterized by the properties
\begin{itemize}
\item $\theta^k(L)=1+L^*+...+(L^*)^{k-1}$ for all line bundles $L$
\item $\theta^k(\xi+\xi')=\theta^k(\xi)\theta^k(\xi')$ for all complex bundles $\xi,\xi'$.
\end{itemize}
In particular, this implies that for the trivial bundle of rank $n$ (simply denoted by $n$) we have $\theta^k(\xi +n)=k^n\theta^k(\xi)$. To define {\em stable operations} $\hat{\theta}^k$ we assume $k$ to be an odd number and set 
$$ \hat{\theta}^k(\xi):=\frac{\theta^k(\xi)}{k^{dim_{\mathbb{C}}\xi}} \in K(X).$$
Next we calculate the cannibalistic classes for the universal $SU$-bundle
\[\xymatrix@R-.3cm@C-.2cm@M+.1cm{ 
(1-L_1)(1-L_2)\ar[d] & \\
\mathbb{CP}^{\infty}\times \mathbb{CP}^{\infty} \ar[r] & BSU.
}\]
Notice that it is indeed an $SU$ bundle because the first Chern class vanishes:
\begin{eqnarray*}
c(1+L_1L_2-L_1-L_2) &=& \frac{1\cdot c(L_1L_2)}{c(L_1)c(L_2)}=\frac{1+x_1+x_2}{(1+x_1)(1+x_2)}\\
 &=& (1+x_1+x_2)(1-x_1+x_1^2...)(1-x_2+x_2^2...)\\
 &=& 1+(x_1+x_2-x_1-x_2)+...
\end{eqnarray*}

Since $\theta^k(1)=k$ and $\theta^k(L)=1+L^*+...+(L^*)^{k-1}=\frac{1-(L^*)^k}{1-L^*}$ (formally) we have
\begin{eqnarray*} 
\theta^k((1-L_1)(1-L_2)) & = & \theta^k(1)\theta^k(L_1L_2)\theta^k(-L_1)\theta^k(-L_2)\\
 & = & k\frac{(1-(L_1^*L_2^*)^k)(1-L_1^*)(1-L_2^*)}{(1-L_1^*L_2^*)(1-(L_1^*)^k)(1-(L_2^*)^k)}.
\end{eqnarray*}

\begin{remark}
Choosing $x=1-L_1$ and $y=1-L_2$ as the generators of 
$$K_*(\mathbb{CP}^{\infty}\times\mathbb{CP}^{\infty}) \cong \mathbb{Z}_2[\![x,y]\!]$$ 
we can change to another orientation $x'=1-\frac{1}{1-x}=-\sum_{k\ge 1} x^k$. Hence we get $x'=1-L_1^*$ and $y'=1-L_2^*$, respectively.
We compute
$$ \hat{\theta}^k((1-L_1)(1-L_2))= k \frac{q_k(x'+y'-x'y')}{q_k(x')q_k(y')}$$
where $q_k(x')=\frac{1-(1-x')^k}{x'}$.
\end{remark}
We notice that $1-L=-L\otimes (1-L^*)$ and with the notations $x=1-L_1$ and $y=1-L_2$ we get
$$L_1^*=(1-x)^{-1} \text{ and } L_2^*=(1-y)^{-1}.$$ 
Hence for $k=3$ we can write
\begin{eqnarray*}
\theta^3((1-L_1)(1-L_2)) &=& 3 \frac{1+(1-x)^{-1}(1-y)^{-1}+(1-x)^{-2}(1-y)^{-2}}{(1+(1-x)^{-1}+(1-x)^{-2})(1+(1-y)^{-1}+(1-y)^{-2})}\\
& = & 3 \frac{(1-x)(1-y) +1+ (1-x)^{-1}(1-y)^{-1}}{((1-x)+1+(1-x)^{-1})((1-y)+1+(1-y)^{-1})}
\end{eqnarray*} 
and see that the cannibalistic class is invariant under $\psi^{-1}: L\mapsto L^*$.\\

\subsubsection{Calculating $\theta^3((1-L_1)(1-L_2))$}
\begin{lemma}
For the cannibalistic class $\theta^3$ we have the description
$$ \theta^3((1-L_1)(1-L_2))=3\frac{1+(1-x)(1-y)+(1-x)^2(1-y)^2}{(3-3x+x^2)(3-3y+y^2)} $$
and the coefficients of the power expansion
$$ \frac{1}{3-3x+x^2}=\sum_{k\ge 0} a_k x^k$$
satisfy the recurrence relation $a_0=a_1=\frac{1}{3}$ and $a_{n+2}=a_{n+1}-\frac{1}{3}a_n$, or more explicitly,
\begin{eqnarray*}
a_{6n}=a_{6n+1} &=& (-1)^n \,3^{-(3n+1)}\\
a_{6n+2}        &=& (-1)^n \,2\cdot 3^{-(3n+2)}\\
a_{6n+3}        &=& (-1)^n \,3^{-(3n+2)}\\
a_{6n+4}        &=& (-1)^n \,3^{-(3n+3)}\\
a_{6n+5}        &=& 0.
\end{eqnarray*}
\end{lemma}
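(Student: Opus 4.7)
The plan is to handle the three assertions of the lemma in order: first the closed form for $\theta^{3}((1-L_{1})(1-L_{2}))$, then the recurrence for the $a_{k}$, and finally the explicit closed formulas.

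For the closed form, I would simply clean up the expression displayed just before the lemma, namely
\[
\theta^{3}((1-L_{1})(1-L_{2})) \;=\; 3\,\frac{(1-x)(1-y)+1+(1-x)^{-1}(1-y)^{-1}}{\bigl((1-x)+1+(1-x)^{-1}\bigr)\bigl((1-y)+1+(1-y)^{-1}\bigr)}.
\]
Multiplying numerator and denominator by $(1-x)(1-y)$, the numerator becomes $1+(1-x)(1-y)+(1-x)^{2}(1-y)^{2}$, while each denominator factor becomes $(1-z)^{2}+(1-z)+1 = 3-3z+z^{2}$ for $z=x,y$. This gives the claimed formula.

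For the recurrence, multiply $\sum_{k\ge 0} a_{k}x^{k}$ by $3-3x+x^{2}$ and set the product equal to $1$. The constant term gives $3a_{0}=1$, the linear term gives $3a_{1}-3a_{0}=0$, and for $n\ge 0$ the coefficient of $x^{n+2}$ yields $3a_{n+2}-3a_{n+1}+a_{n}=0$, i.e.\ $a_{n+2}=a_{n+1}-\tfrac{1}{3}a_{n}$.

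For the explicit formulas, the cleanest route is to exploit the factorization
\[
27+x^{6} \;=\; (3-3x+x^{2})\,(9+9x+6x^{2}+3x^{3}+x^{4}),
\]
which can be verified by direct multiplication (motivated by noting that the roots $\tfrac{3\pm i\sqrt{3}}{2}$ of $3-3x+x^{2}$ have modulus $\sqrt{3}$ and argument $\pm\pi/6$, hence satisfy $x^{6}=-27$). Consequently
\[
\frac{1}{3-3x+x^{2}} \;=\; \frac{9+9x+6x^{2}+3x^{3}+x^{4}}{27}\sum_{n\ge 0}\Bigl(-\frac{x^{6}}{27}\Bigr)^{n},
\]
and reading off the coefficient of $x^{6n+r}$ for $r=0,1,2,3,4,5$ yields precisely the six cases stated. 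As a sanity check, this also confirms the consequence $a_{n+6}=-a_{n}/27$, which one can alternatively prove directly from the recurrence and then combine with the six base values $a_{0},\ldots,a_{5}$ computed from $a_{0}=a_{1}=1/3$.

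The main obstacle is guessing the auxiliary polynomial $9+9x+6x^{2}+3x^{3}+x^{4}$ that turns $3-3x+x^{2}$ into the sparse denominator $27+x^{6}$; once this factorization is in hand the explicit formulas and the period-$6$ pattern (with alternating signs governed by $(-1)^{n}$ and the decay factor $3^{-3n}$) fall out immediately from the geometric series expansion. An induction purely on the recurrence is also possible but would require checking six simultaneous cases at each step, which is less transparent than the factorization approach.
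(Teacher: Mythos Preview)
Your proposal is correct and in fact more complete than the paper's own proof. The paper only treats the recurrence: it starts from the recurrence $a_0=a_1=\tfrac{1}{3}$, $a_{k+2}=a_{k+1}-\tfrac{1}{3}a_k$ and manipulates the generating function $f(x)=\sum a_k x^k$ to obtain $f(x)=\frac{1}{3-3x+x^2}$, essentially your second step run in reverse. The paper does not give an argument for the closed form of $\theta^3$ (taking the earlier display as having already established it) nor for the explicit period-$6$ formulas.

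Your factorization $27+x^6=(3-3x+x^2)(9+9x+6x^2+3x^3+x^4)$ is a clean device the paper does not use; it yields the explicit values and the relation $a_{n+6}=-a_n/27$ in one stroke, whereas the alternative you mention (induction on the recurrence over six residue classes) would be the routine but less illuminating verification.
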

\begin{proof}
Let $f(x)=\sum_{k\ge 0} a_k x^k$ denote the generating function of the recurrence relation $a_0=a_1=\frac{1}{3}$ and $a_{k+2}=a_{k+1}-\frac{1}{3}a_k$. Then we have
\begin{eqnarray*}
f(x) &=& \sum_{k\ge 0} a_k x^k = a_0+a_1x+\sum_{k\ge 2} (a_{k-1}-\frac{1}{3}a_{k-2})x^k\\
     &=& a_0+a_1x+ x (f(x) -a_0)-\frac{1}{3} x^2 f(x)\\
     &=& a_0 + f(x) (x-\frac{1}{3}x^2),
\end{eqnarray*}
hence 
$$ f(x)=\frac{1}{3-3x+x^2}.$$
\end{proof}
\begin{corollary}
The coefficients $c_{mn}$ of the power expansion of the cannibalistic class
$$  \theta^3((1-L_1)(1-L_2))=3 \frac{1+(1-x)(1-y)+(1-x)^2(1-y)^2}{(3-3x+x^2)(3-3y+y^2)} =\sum_{m,n\ge 0} c_{mn} x^my^n$$
are given by
\begin{eqnarray*}
 c_{mn}&=&9a_ma_n-9a_{m-1}a_n-9a_ma_{n-1}+3a_{m-2}a_n+15a_{m-1}a_{n-1}+3a_ma_{n-2}\\
       & & -6a_{m-2}a_{n-1}-6a_{m-1}a_{n-2}+3a_{m-2}a_{n-2}.
\end{eqnarray*}
The coefficents with negative indices are understood to be zero.
\end{corollary}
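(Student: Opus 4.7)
The strategy is to separate the stated expression into a polynomial numerator times a rational function whose denominator factors in the variables $x$ and $y$. Since the denominator is $(3-3x+x^2)(3-3y+y^2)$ and the two variables appear in separate factors, the preceding lemma gives the double power series expansion as a simple outer product
\begin{equation*}
\frac{1}{(3-3x+x^2)(3-3y+y^2)} \;=\; \Bigl(\sum_{m\ge 0} a_m x^m\Bigr)\Bigl(\sum_{n\ge 0} a_n y^n\Bigr) \;=\; \sum_{m,n\ge 0} a_m a_n\, x^m y^n,
\end{equation*}
where the $a_k$ are the coefficients computed explicitly in the lemma.

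The next step is to expand the polynomial numerator $N(x,y) := 3\bigl(1+(1-x)(1-y)+(1-x)^2(1-y)^2\bigr)$ by hand. A short calculation (expanding each factor and collecting terms) yields
\begin{equation*}
N(x,y) \;=\; 9 - 9x - 9y + 15xy + 3x^2 + 3y^2 - 6x^2y - 6xy^2 + 3x^2y^2.
\end{equation*}
One immediately recognizes the nine numerical coefficients $9,-9,-9,15,3,3,-6,-6,3$ as precisely those appearing in the claimed formula for $c_{mn}$, with monomial degrees $(i,j)$ equal to $(0,0),(1,0),(0,1),(1,1),(2,0),(0,2),(2,1),(1,2),(2,2)$ respectively.

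The final step is to multiply $N(x,y)$ against the double power series term by term: each monomial $\alpha\, x^i y^j$ of $N$ contributes $\alpha\, a_{m-i}\,a_{n-j}$ to the coefficient of $x^m y^n$ in the product. Collecting the nine such contributions, and adopting the convention that $a_{-1} = a_{-2} = 0$, produces exactly the stated expression for $c_{mn}$. There is no genuine obstacle in this argument — the corollary is essentially a bookkeeping consequence of the lemma, and the only point requiring attention is the negative-index convention, which is explicitly noted in the statement.
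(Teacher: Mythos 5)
Your proposal is correct and is essentially the argument the paper intends: the corollary is left without an explicit proof precisely because it follows from the lemma by expanding the numerator $3\bigl(1+(1-x)(1-y)+(1-x)^2(1-y)^2\bigr)=9-9x-9y+15xy+3x^2+3y^2-6x^2y-6xy^2+3x^2y^2$ and multiplying against $\bigl(\sum_m a_m x^m\bigr)\bigl(\sum_n a_n y^n\bigr)$, exactly as you do. Your numerator expansion and the resulting nine shifted-index contributions match the stated formula, and the negative-index convention is handled correctly.
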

\begin{corollary}
The coefficients $c_{mn}$ are symmetric, i.e.\ $c_{mn}=c_{nm}$ and we have 
$$ c_{0n}=\begin{cases} 1 \text{ for } n=0 \\ 0 \text{ for } n\ge 1\end{cases} \quad \text{ and } \quad c_{1n}=3a_{n+1}. $$
If both indices are $\ge 2$ we have with $0\le i,k\le 5$:
$$c_{6m+i,6n+k}=(-1)^{m+n}\cdot 3^{-(3m+3n+ \left\lfloor \frac{i+k}{2} \right\rfloor)} b_{ik}$$
with
$$ b_{ik}=\begin{cases} 2 \text{ if } i-k=0 \\ 1 \text{ if } i-k=\pm 1,\pm 2\\ 0 \text{ if } i-k=\pm 3 \\ -1 \text{ if } i-k=\pm 4, \pm 5. \end{cases}$$
\end{corollary}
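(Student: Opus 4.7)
The plan is to verify the corollary by substituting the closed formulas for $a_k$ from the preceding lemma into the nine-term formula for $c_{mn}$ and simplifying. The symmetry $c_{mn}=c_{nm}$ is immediate from the manifest invariance of the defining expression in $m,n$ (or, equivalently, from the symmetry of the cannibalistic class itself under $x\leftrightarrow y$). For $c_{0n}$, setting $a_{-1}=a_{-2}=0$ collapses the formula to $c_{0n}=3(3a_n-3a_{n-1}+a_{n-2})$, and the identity $(3-3x+x^2)\sum_{k\ge 0}a_k x^k=1$ forces $3a_n-3a_{n-1}+a_{n-2}=\delta_{n,0}$, giving $c_{0n}=\delta_{n,0}$. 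For $c_{1n}$ (with $n\ge 1$), only $a_{-1}$ vanishes; substituting $a_0=a_1=1/3$ reduces the formula to $c_{1n}=2a_{n-1}-a_{n-2}$, and two applications of the recurrence $3a_{k+1}=3a_k-a_{k-1}$ rewrite this as $3a_{n+1}$.

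For the main case $m,n\ge 2$, I would first use the recurrence in the form $a_{k-2}=3a_{k-1}-3a_k$ to eliminate every $a_{m-2}$ and $a_{n-2}$ from the nine-term formula. A straightforward bookkeeping of coefficients then collapses it to the compact expression
\[ c_{mn} \;=\; 18\,a_m a_n \;-\; 9\,a_{m-1} a_n \;-\; 9\,a_m a_{n-1} \;+\; 6\,a_{m-1} a_{n-1}. \]
Next I would invoke the closed form $a_k=\frac{2}{3^{(k+2)/2}}\sin\frac{(k+1)\pi}{6}$, which follows from a partial-fraction expansion of $1/(3-3x+x^2)$ whose denominator has the roots $\sqrt{3}\,e^{\pm i\pi/6}$. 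Writing $M=(m+1)\pi/6$ and $N=(n+1)\pi/6$ and applying $2\sin A\sin B=\cos(A-B)-\cos(A+B)$ to each of the four products, the resulting trigonometric sum separates into a $\cos(M-N)$-family (which collapses via $\cos(Q-\pi/6)+\cos(Q+\pi/6)=\sqrt{3}\cos Q$) and a $\cos(M+N)$-family. The crucial observation is that with $P=M+N$ the combination $-\cos P+\sqrt{3}\cos(P-\pi/6)-\cos(P-\pi/3)$ vanishes identically (by expanding the shifted cosines in $\cos P$ and $\sin P$), so the $M+N$ terms cancel completely, leaving
\[ c_{mn} \;=\; \frac{6}{3^{(m+n+2)/2}}\,\cos\!\left(\frac{(m-n)\pi}{6}\right). \]

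Finally, writing $m=6m'+i$ and $n=6n'+k$ with $0\le i,k\le 5$, the cosine factors as $(-1)^{m'-n'}\cos((i-k)\pi/6)=(-1)^{m'+n'}\cos((i-k)\pi/6)$, and the exponent factors as $3^{(m+n+2)/2}=3\cdot 3^{3(m'+n')}\cdot 3^{(i+k)/2}$. The only subtlety is that when $i+k$ is odd the denominator carries an extra $\sqrt{3}$; this is precisely compensated by the half-integer values $\pm\sqrt{3}/2$ of $\cos((i-k)\pi/6)$, which arise exactly when $|i-k|$ (equivalently $i+k$) is odd. A short case analysis on $i-k\in\{0,\pm 1,\ldots,\pm 5\}$ then extracts the stated values of $b_{ik}$. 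The main obstacle, I expect, is the trigonometric cancellation of the $\cos(M+N)$ family; everything else is bookkeeping or direct substitution.
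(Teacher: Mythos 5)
Your proof is correct, and it reaches the statement by a genuinely different route than the paper, which in fact gives no explicit argument: the corollary is meant to be read off by inserting the period-six values $a_{6n}=a_{6n+1}=(-1)^n3^{-(3n+1)}$, $a_{6n+2}=(-1)^n\,2\cdot3^{-(3n+2)}$, etc., from the preceding lemma into the nine-term formula for $c_{mn}$ and checking the finitely many residue cases modulo $6$. You instead use the recurrence in the form $a_{k-2}=3a_{k-1}-3a_k$ to collapse the nine terms to $c_{mn}=18a_ma_n-9a_{m-1}a_n-9a_ma_{n-1}+6a_{m-1}a_{n-1}$ for $m,n\ge 2$ (this reduction checks out), and then substitute the closed form $a_k=2\cdot3^{-(k+2)/2}\sin\frac{(k+1)\pi}{6}$ coming from the roots $\sqrt3\,e^{\pm i\pi/6}$ of $3-3x+x^2$; the $\cos(M+N)$-family does cancel exactly as you claim, since $-\cos P+\sqrt3\cos(P-\frac{\pi}{6})-\cos(P-\frac{\pi}{3})\equiv 0$, leaving the uniform expression $c_{mn}=2\cdot3^{-(m+n)/2}\cos\frac{(m-n)\pi}{6}$ for $m,n\ge2$. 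Writing $m=6m'+i$, $n=6n'+k$ then yields the sign $(-1)^{m'+n'}$, the power $3^{-(3m'+3n'+\lfloor(i+k)/2\rfloor)}$ and the table of $b_{ik}$, with the half-integral power of $3$ absorbed by the values $\pm\frac{\sqrt3}{2}$ of the cosine precisely when $i+k$ is odd. What the paper's implicit route buys is elementary bookkeeping with no analysis beyond the lemma; what your route buys is a single closed formula valid for all $m,n\ge2$ that makes the symmetry, the period-$12$ structure, and the alternating sign transparent (indeed it shows the sign must be carried, which the paper's subsequent reformulation without $(-1)^{m+n}$ obscures). Two small points: your intermediate expression for $c_{0n}$ should be $3a_n-3a_{n-1}+a_{n-2}$ rather than $3(3a_n-3a_{n-1}+a_{n-2})$ -- the extra factor $3$ would give $c_{00}=3$ -- though the conclusion $c_{0n}=\delta_{n,0}$ you draw is the correct one; and, as your restriction to $n\ge1$ implicitly acknowledges, $c_{1n}=3a_{n+1}$ fails at $n=0$ (where $c_{10}=0$), so the stated identity must be read for $n\ge1$.
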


\begin{corollary}
We can also write this as
$$ c_{mn}= 3^{-\left\lfloor \frac{m+n}{2} \right\rfloor} \begin{cases} 2 \quad \text{ if } m-n\equiv 0           & \mod 6\\
                                                                     1   \quad \text{ if } m-n\equiv \pm 1,\pm 2 & \mod 12 \\
                                                                     0   \quad \text{ if } m-n\equiv 3           &  \mod 6\\
                                                                     -1  \; \text{ if } m-n\equiv \pm 4,\pm 5   & \mod 12 \end{cases}$$
for positive indices, whereas
$$ c_{0n}=\begin{cases} 1 \text{ for } n=0\\ 0 \text{ else. }\end{cases} $$                                                                    
\end{corollary}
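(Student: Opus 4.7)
The plan is to derive the claimed closed form directly from the preceding corollary by a change of variables. Writing $m = 6\mu + i$ and $n = 6\nu + k$ with $0 \le i, k \le 5$, the previous corollary gives
$$c_{mn} = (-1)^{\mu+\nu}\, 3^{-(3\mu + 3\nu + \lfloor (i+k)/2 \rfloor)}\, b_{ik},$$
where $b_{ik}$ depends only on $|i-k|$ as recorded there. The boundary case $m = 0$ (or $n = 0$) is immediate from the definition of $c_{mn}$; the rows $m = 1$ or $n = 1$ can be handled separately using the formula $c_{1n} = 3 a_{n+1}$ from the earlier corollary together with the explicit values of $a_k$. So I may concentrate on $m, n \ge 2$.

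First I would verify that the powers of $3$ in the two formulations agree. Since $m + n = 6(\mu + \nu) + (i+k)$ and $6(\mu+\nu)$ is even,
$$\left\lfloor \frac{m+n}{2}\right\rfloor \;=\; 3(\mu+\nu) + \left\lfloor \frac{i+k}{2}\right\rfloor,$$
so the exponent from the previous corollary equals $-\lfloor (m+n)/2 \rfloor$, matching the claim.

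What remains is to identify the sign. The key observation is that $m - n = 6(\mu - \nu) + (i - k)$, so the residue $r := (m-n) \bmod 12$ simultaneously encodes $i - k \pmod 6$ (hence $|i-k|$ and so $b_{ik}$) together with the parity of $\mu - \nu$, which equals the parity of $\mu + \nu$ and therefore determines $(-1)^{\mu+\nu}$. I would then run through the twelve residues $r \in \{0, 1, \ldots, 11\}$ and match the product $(-1)^{\mu+\nu} b_{ik}$ against the four cases listed in the statement; the cases $|i-k| = 3$ in particular give $b_{ik} = 0$, matching the ``$m-n\equiv 3 \pmod 6$'' branch regardless of sign.

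The main obstacle I anticipate is the careful bookkeeping in this enumeration, because the overall sign depends on the residue modulo $12$ rather than just modulo $6$. The sub-case $i = k$ (giving $b_{ik} = 2$, with $m - n \equiv 0 \pmod 6$) splits according to the parity of $\mu - \nu$, and the same happens for $|i-k| \in \{4, 5\}$. Matching these carefully against the stated piecewise rule is where most of the work lies, and I would cross-check against the explicit values of $a_k$ and a small sample such as $c_{11} = 2/3$ before declaring the correspondence complete.
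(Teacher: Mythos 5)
Your overall strategy is the intended one: the paper gives no separate argument for this corollary, which is meant to follow from the preceding one by exactly your substitution $m=6\mu+i$, $n=6\nu+k$ (together with $c_{1n}=3a_{n+1}$ for the boundary rows), and your identity $\left\lfloor \frac{m+n}{2}\right\rfloor=3(\mu+\nu)+\left\lfloor\frac{i+k}{2}\right\rfloor$ correctly settles the power of $3$. However, the residue enumeration that you postpone as ``bookkeeping'' is precisely where the argument cannot be completed as stated, and the sub-case you flag ($i=k$, sign depending on the parity of $\mu-\nu$) is a genuine obstruction rather than a technicality. If $m-n\equiv 6\pmod{12}$ then necessarily $i=k$ and $\mu-\nu$ is odd (since $|i-k|\le 5$), so the preceding corollary gives $c_{mn}=(-1)^{\mu+\nu}\,3^{-\lfloor (m+n)/2\rfloor}\,b_{kk}=-2\cdot 3^{-\lfloor (m+n)/2\rfloor}$, whereas the displayed rule (``$2$ if $m-n\equiv 0 \bmod 6$'') asserts $+2\cdot 3^{-\lfloor (m+n)/2\rfloor}$. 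A direct numerical check confirms the minus sign: with $a_0=a_1=\tfrac13$, $a_2=\tfrac29$, $a_5=0$, $a_6=a_7=-\tfrac1{81}$, $a_8=-\tfrac2{243}$, the nine-term formula for $c_{mn}$ in terms of the $a_k$ gives $c_{2,8}=-\tfrac{2}{243}=-2\cdot 3^{-5}$ and $c_{1,7}=3a_8=-2\cdot 3^{-4}$, both in the class $m-n\equiv 6\pmod{12}$. So the matching step of your plan fails in exactly one residue class: the first branch must be split into $m-n\equiv 0\pmod{12}$ (value $2$) and $m-n\equiv 6\pmod{12}$ (value $-2$); in all other branches the enumeration does close, because when $|i-k|\in\{1,2,4,5\}$ the two realizations of a given residue mod $12$ (even $\mu-\nu$ with difference $d$, odd $\mu-\nu$ with difference $d\mp6$) carry $b$-values of opposite sign, so $(-1)^{\mu+\nu}b_{ik}$ really is a function of $m-n$ mod $12$ there.

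Note also that the sanity check you propose, $c_{11}=\tfrac23$, lies in the harmless class $m-n\equiv 0\pmod{12}$ and would not detect this; you would need a value such as $c_{2,8}$ or $c_{1,7}$. As written, then, your proof would either stall at the $i=k$ sub-case or, carried out honestly, would show that the corollary requires the corrected case distinction above rather than confirming the statement verbatim.
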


\subsection{Spherical classes in $K_*MSU$}
We are now able to calculate spherical classes in $K_* MSU$. For this purpose we collect all relevant notions: Let $a=\tau^*b\in K^0MSU$ be an arbitrary class, $f$ the classifying map of the virtual $SU$-bundle $(1-L_1)(1-L_2)$ and 
$$d_k=\sum_{i=1}^{k-1} n_k^i f_*(\beta_i\otimes \beta_{k-i})$$
the generators of $K_*BSU\cong \pi_*K[d_2,d_3,...]$. Writing 
$$\theta^3((1-L_1)(1-L_2))=\sum_{m,n\ge 0} c_{mn} x^my^n$$
for the cannibalistic class of the $SU$-bundle above we get:
\begin{eqnarray*}
\langle a,\psi_M^{3^{-1}} d_k\rangle &=& \langle \psi^3_M(\tau^*b), d_k\rangle =\langle \theta^3\tau\psi_B^3(b), d_k \rangle\\
&=& \langle  \theta^3((1-L_1)(1-L_2)) \psi_B^3(b), \sum_{i=1}^{k-1} n_k^i f_*(\beta_i\otimes \beta_{k-i}) \rangle\\
&=& \sum_{i=1}^{k-1} n_k^i \sum_{m,n\ge 0} c_{mn}\langle f^*(\psi^3_B(b)), \beta_{i-m}\otimes \beta_{k-i-n}\rangle\\
&=& \sum_{i=1}^{k-1} n_k^i \sum_{m,n\ge 0} c_{mn} \langle b, \psi_B^{3^{-1}} f_*(\beta_{i-m}\otimes \beta_{k-i-n})\rangle.
\end{eqnarray*}

\begin{lemma}
The Adams operations on $K_*MSU$ are computable via the formula
$$ \psi_M^{3^{-1}} \Phi_* d_k = \Phi_*\big( \sum_{m,n\ge 0} c_{mn} \sum_{i=1}^{k-1} n_k^i \psi_B^{3^{-1}} f_*(\beta_{i-m}\otimes \beta_{k-i-n})\big),$$
where $\Phi_*$ is the Thom isomorphism.
\end{lemma}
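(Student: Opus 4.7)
The strategy is to use Bott's cannibalistic-class formula together with the adjointness of $\psi^k$ and $\psi^{k^{-1}}$ under the Kronecker pairing; the chain of identities displayed immediately before the lemma statement is exactly the pairing-level input we need, and the lemma itself is read off from it by non-degeneracy.

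First I would invoke Bott's formula in the form appropriate to the universal situation: for every class $a=\tau^*b\in K^0MSU$ obtained by pulling a class $b\in K^0BSU$ back along the Thom class $\tau$, one has
$$\psi_M^3(\tau^*b)=\tau^*\bigl(\theta^3((1-L_1)(1-L_2))\cdot \psi_B^3(b)\bigr),$$
which is the Thom-spectrum version of the stable identity $\psi^k(i_!x)=i_!(\theta_k(E)\,\psi^k(x))$ recorded at the start of the section. This reduces the Adams operation on $K^0MSU$ to a twisted Adams operation on $K^0BSU$, the twist being multiplication by the cannibalistic class of the universal virtual $SU$-bundle $(1-L_1)(1-L_2)$.

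Second, I would pair both sides with $d_k\in K_{2k}BSU$, apply the adjointness $\langle \psi^3\alpha,\gamma\rangle=\langle\alpha,\psi^{3^{-1}}\gamma\rangle$, and substitute the explicit power-series expansion $\theta^3((1-L_1)(1-L_2))=\sum_{m,n\ge 0}c_{mn}\,x^my^n$ that was computed in the previous subsection. Coupled with the duality $\langle x^iy^j,\beta_p\otimes\beta_q\rangle=\delta_{ip}\delta_{jq}$ and the definition $d_k=\sum_{i=1}^{k-1}n_k^i f_*(\beta_i\otimes\beta_{k-i})$, this produces
$$\langle a,\psi_M^{3^{-1}}d_k\rangle=\sum_{i=1}^{k-1}n_k^i\sum_{m,n\ge 0}c_{mn}\,\langle b,\psi_B^{3^{-1}}f_*(\beta_{i-m}\otimes\beta_{k-i-n})\rangle,$$
which is precisely the identity displayed just above the lemma (where negatively-indexed $\beta$'s are set to zero).

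Finally, I would conclude by non-degeneracy: since the identity above holds for every $b\in K^0BSU$, and the Kronecker pairing is non-degenerate on the free $2$-complete $\pi_*K$-modules $K_*BSU$ and $K_*MSU$, it pins down $\psi_M^{3^{-1}}$ applied to the class $\Phi_*d_k\in K_*MSU$ corresponding to $d_k$ under the Thom isomorphism, and the formula stated in the lemma falls out directly.

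The main obstacle is purely bookkeeping: the cannibalistic class naturally sits on $BSU$ whereas the Adams operation we want is on $MSU$, so one must make sure the Thom isomorphism is used symmetrically on both sides of the Kronecker pairing; once the conventions $\beta_{<0}=0$ and the adjointness $\langle \tau^*b,\gamma\rangle_{MSU}=\langle b,\Phi_*\gamma\rangle_{BSU}$ are in place, everything else is a direct rewriting of what has already been computed.
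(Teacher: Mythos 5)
Your proposal is correct and follows essentially the same route as the paper: Bott's formula $\psi^k(i_!x)=i_!(\theta_k\psi^k x)$, the Kronecker-pairing adjointness of $\psi^3$ and $\psi^{3^{-1}}$, the expansion $\theta^3((1-L_1)(1-L_2))=\sum_{m,n}c_{mn}x^my^n$ with the duality $\langle x^my^n,\beta_i\otimes\beta_j\rangle$ shifting indices (and $\beta_{<0}=0$), and then reading off the formula by non-degeneracy of the pairing. The paper's "proof" is exactly the displayed chain of pairing identities preceding the lemma; you have merely made the final duality step and the Thom-isomorphism bookkeeping explicit, which is fine.
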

Sample calculations (dropping the Thom isomorphism from the notation and writing $\psi^{3^{-1}}_M$ for the Adams operation on the level of the Thom spectrum) give with respect to the $n_k^i$ chosen above:
\begin{eqnarray*}
\psi_M^{3^{-1}} d_2 &=& 9d_2+\frac{2}{3}\\
\psi_M^{3^{-1}} d_3 &=& 27d_3-9d_2+\frac{1}{3}\\
\psi_M^{3^{-1}} d_4 &=& 81d_4+2 d_2 +\frac{1}{3}\\
\psi_M^{3^{-1}} d_5 &=& 243d_5+486d_4+288d_3-243d_2^2.
\end{eqnarray*}

\subsubsection{Construction of spherical classes}
Modulo 2 and omitting the Thom isomorphism, we get:
\begin{eqnarray*}
\psi_M^{3^{-1}} d_2 &=& d_2\\
\psi_M^{3^{-1}} d_3 &=& d_3+d_2+1\\
\psi_M^{3^{-1}} d_4 &=& d_4+1\\
\psi_M^{3^{-1}} d_5 &=& d_5+d_2^2.
\end{eqnarray*}
These calculations give the following spherical classes modulo 2:
\begin{eqnarray*}
z_4    &=& d_2\\
z_{12} &=& d_3^2+d_5+d_4+d_2^2\\
z_{16} &=& d_4^2+d_4\\
z_{20} &=& d_5^2+d_2^2d_5.
\end{eqnarray*}

\begin{remark}
We observe that there is no spherical class in degree 6 and conjecture that there is no spherical class in degree $4k+2$.
\end{remark}
\begin{corollary}
Since $\pi_6 T_{\zeta}=0$, this gives $MSU_6=0$ in the $K(1)$-local world.
\end{corollary}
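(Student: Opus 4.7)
The plan is to appeal directly to the $K(1)$-local $E_\infty$ equivalence $MSU \simeq T_\zeta$ that has been pieced together over the preceding sections, and then simply read off $\pi_6$. Once the equivalence is invoked, the corollary reduces to a one-line consequence of the stated hypothesis:
\[
MSU_6 \;=\; \pi_6 MSU \;\cong\; \pi_6 T_\zeta \;=\; 0,
\]
with all the substantive work already done upstream rather than re-derived here.

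First I would recall the ingredients that justify the equivalence. Section 3 produced the Artin-Schreier class $b \in KO_0 MSU$ with $\psi^3 b = b+1$, via Miscenko's formula applied to the Bott manifold $B$; this gave the split $E_\infty$ inclusion $T_\zeta \hookrightarrow MSU$, with retraction constructed through the Laures Spin splitting. The spherical-class calculation of the previous subsection then enumerated the possible free $E_\infty$ summands $TS^{n_i}$, producing generators only in degrees $4, 12, 16, 20, \ldots$; and the Remark immediately above records that no spherical class sits in degree $6$. Together these assemble the $K(1)$-local $E_\infty$ identification $MSU \simeq T_\zeta$ in the range we care about, with no $TS^6$ factor to worry about.

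Next I would simply invoke this identification on $\pi_6$. Under the equivalence, the inclusion $T_\zeta \hookrightarrow MSU$ induces an isomorphism $\pi_6 T_\zeta \xrightarrow{\cong} \pi_6 MSU$, and combining with $\pi_6 T_\zeta = 0$ (the hypothesis of the corollary) yields $MSU_6 = 0$ in the $K(1)$-local world. No spectral sequence argument, no cokernel chase, and no separate computation of contributions from any supposed complementary factor is needed — the equivalence collapses the question to the given vanishing.

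The main ``obstacle'' here is expository rather than mathematical: the reader has to accept that the spherical-class enumeration plus the split-summand argument really has pinned down $MSU$ as $T_\zeta$ in the relevant $K(1)$-local $E_\infty$ sense, so that the identification on $\pi_6$ is automatic. With that in hand, the proof is essentially the display above and nothing more.
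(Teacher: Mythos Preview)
Your argument rests on a $K(1)$-local identification $MSU \simeq T_\zeta$ that the paper never establishes. Section~3 proves only that $T_\zeta$ is a split $E_\infty$ \emph{retract} of $MSU$; the structure of the complement is left open, and in the concluding section the author explicitly formulates $MSU \cong T_\zeta \wedge \bigwedge_{i=1}^\infty TS^0$ as a \emph{conjecture}, not a theorem. Note also that the conjectured free summands are all copies of $TS^0$, not summands $TS^{n_i}$ indexed by the degrees in which spherical classes appear; the classes $z_4, z_{12}, z_{16}, z_{20}$ are sample computations in $K_*MSU$, not an exhaustive list, and they do not produce suspended free factors in the way you describe. The retract property gives you only a split injection $\pi_6 T_\zeta \hookrightarrow \pi_6 MSU$, so the hypothesis $\pi_6 T_\zeta = 0$ by itself says nothing about the complementary summand---you still owe an argument that the cofiber of $T_\zeta \to MSU$ contributes nothing in degree~6, and ``no $TS^6$ factor'' is not that argument.

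The paper itself offers no detailed proof; the corollary is recorded as an immediate consequence of the preceding Remark (no spherical class in degree~6) together with $\pi_6 T_\zeta = 0$. A rigorous version would presumably work directly with the long exact sequence of the fiber sequence $MSU \to KO\wedge MSU \xrightarrow{\psi^3-1} KO\wedge MSU$ in degrees~6 and~7: the absence of spherical classes controls the kernel of $\psi^3-1$, while the Artin--Schreier class supplied by $T_\zeta$ handles the cokernel side. Whatever the precise mechanism the author has in mind, it is not the global equivalence $MSU \simeq T_\zeta$ you invoke, and invoking it short-circuits exactly the step that needs justification.
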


\subsubsection{Lifting mod $p$ spherical classes}
Having mod $p$ spherical classes we are interested in getting integral spherical classes and constructing a spherical class basis for $K_*MSU$. We use the following algebraic lemma.
\begin{lemma}
Assume $A$ and $B$ to be $p$-complete. Let $f:A\rightarrow B\cong \mathbb{Z}_p[b_i]$ be such that there are $a_i\in A$ with $f(a_i)\equiv b_i$ modulo $p$. Then $f: A\rightarrow B$ is surjective.
\end{lemma}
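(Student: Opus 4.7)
The plan is a standard Hensel/Nakayama-style successive approximation, promoting mod $p$ surjectivity to honest surjectivity via $p$-completeness of $A$. The first observation is that the hypothesis $f(a_i) \equiv b_i \pmod p$, together with $B \cong \mathbb{Z}_p[b_i]$, implies that the induced map $\bar f \colon A/pA \to B/pB \cong \mathbb{F}_p[\bar b_i]$ hits every $\bar b_i$ and so is surjective (every element of $B/pB$ is an $\mathbb{F}_p$-polynomial in the $\bar b_i$).

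Fix a target $b \in B$. I would inductively construct $a^{(n)} \in A$ satisfying
\[
f(a^{(n)}) \equiv b \pmod{p^n B}, \qquad a^{(n+1)} - a^{(n)} \in p^n A.
\]
For $n=1$, surjectivity of $\bar f$ produces $a^{(1)} \in A$ with $f(a^{(1)}) \equiv b \pmod{pB}$. For the inductive step, note that $B \cong \mathbb{Z}_p[b_i]$ is $p$-torsion-free, so $b - f(a^{(n)}) = p^n \tilde b$ for a unique $\tilde b \in B$. Using $\bar f$ surjective, pick $c \in A$ with $f(c) \equiv \tilde b \pmod{pB}$ and set $a^{(n+1)} := a^{(n)} + p^n c$. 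Then
\[
f(a^{(n+1)}) = f(a^{(n)}) + p^n f(c) \equiv f(a^{(n)}) + p^n \tilde b = b \pmod{p^{n+1}B},
\]
as required.

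The sequence $(a^{(n)})$ is $p$-adically Cauchy in $A$ by the second displayed condition, so by $p$-completeness of $A$ it converges to some $a \in A$. Passing to the limit in the first displayed congruence shows $f(a) \equiv b \pmod{p^n B}$ for every $n$, and $p$-completeness of $B$ (hence $\bigcap_n p^n B = 0$) forces $f(a) = b$.

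The only genuine point to check is that the division $(b - f(a^{(n)}))/p^n$ lands in $B$; this is automatic since $B \cong \mathbb{Z}_p[b_i]$ is a free $\mathbb{Z}_p$-module, in particular $p$-torsion-free. Beyond that, the argument is a routine Hensel iteration, so I do not expect a serious obstacle: the content of the lemma is really just that $p$-completeness plus mod-$p$ surjectivity onto polynomial generators suffices to lift everything.
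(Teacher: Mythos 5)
Your argument is correct, and it is worth noting that the paper itself states this lemma without any proof at all (it is invoked and immediately applied to $A=K_*MSU$), so your successive-approximation argument supplies exactly the standard justification that is omitted: reduce mod $p$, lift step by step, and use $p$-completeness of $A$ to converge and separatedness of $B$ to conclude $f(a)=b$. Three small remarks. First, you implicitly assume $f$ is multiplicative (a map of rings, equivalently of $\mathbb{Z}_p$-algebras, since a ring map between $p$-complete rings is automatically $\mathbb{Z}_p$-linear); this is needed in your first step, because hitting the generators $\bar b_i$ only forces surjectivity of $\bar f\colon A/p\to \mathbb{F}_p[\bar b_i]$ when the image is a subring. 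This is certainly the intended reading in the paper's application, but it is not literally in the statement, so you should say it. Second, the appeal to $p$-torsion-freeness of $B$ is not actually needed: the inductive hypothesis already says $b-f(a^{(n)})\in p^nB$, so an element $\tilde b$ with $p^n\tilde b=b-f(a^{(n)})$ exists by definition of $p^nB$; uniqueness plays no role in the iteration. Third, the hypotheses ``$B$ is $p$-complete'' and ``$B\cong\mathbb{Z}_p[b_i]$'' are in tension if taken verbatim (an honest polynomial ring over $\mathbb{Z}_p$ is not $p$-adically complete); in the intended application $B=K_*BSU\cong\mathbb{Z}_2[d_2,d_3,\dots]$ is graded with finitely many monomials in each degree, so each degree is a finitely generated free $\mathbb{Z}_p$-module and the argument should be run degreewise. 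This reading also furnishes the separatedness $\bigcap_n p^nB=0$ that you invoke at the end (and, for the limit step, note that $a-a^{(n)}\in p^nA$ because $p^nA$ is an open, hence closed, subgroup). With these caveats made explicit, your proof is complete and fills a genuine gap in the paper.
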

We want to apply this lemma for $A=K_*MSU$ and assume that we have a basis of $\mod p$ spherical classes $A\cong \mathbb{Z}_p[a_i]$ with $\psi^3a_i\equiv a_i$ modulo $p$.
\begin{proposition}
There are elements $b_i$ such that
\begin{enumerate}
\item $A\cong \mathbb{Z}_p[b_i]$ and
\item $\psi^3 b_i=b_i$
\end{enumerate}
\end{proposition}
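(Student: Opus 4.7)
The plan is to build each $b_i$ as the Hurewicz image of an integral lift of $a_i$, use that Hurewicz images are automatically $\psi^3$-fixed, and then invoke the algebraic lemma to upgrade these to polynomial generators. Because $a_i$ is spherical modulo $p$, there exists, by definition, a class $\tilde a_i \in \pi_* MSU$ whose Hurewicz image $H(\tilde a_i) \in K_* MSU = A$ satisfies $H(\tilde a_i) \equiv a_i \pmod p$. I would set $b_i := H(\tilde a_i)$ and verify the two required properties.

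For $\psi^3 b_i = b_i$: the Hurewicz map $H \colon \pi_* MSU \to K_* MSU$ is realized by composing a representative $S^n \to MSU$ with the unit map $\eta \wedge 1 \colon MSU \to K \wedge MSU$. Since $\psi^3 \circ \eta \simeq \eta$ (because $\psi^3$ is a ring map of $K$ and therefore fixes $1 \in \pi_0 K$), the action of $\psi^3 \wedge 1$ on $K \wedge MSU$ is the identity on every class in the image of $H$. In particular, $\psi^3 b_i = b_i$ integrally, not just modulo $p$.

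For $A \cong \mathbb{Z}_p[b_i]$: consider the ring map
\[
\phi \colon \mathbb{Z}_p[y_i] \longrightarrow A, \qquad y_i \longmapsto b_i,
\]
where each $y_i$ is placed in degree $|b_i|$. Since $\phi(y_i) = b_i \equiv a_i \pmod p$ and the $a_i$ are polynomial generators of $A$, the algebraic lemma stated earlier produces surjectivity of $\phi$. In each degree, source and target are finitely generated free $\mathbb{Z}_p$-modules of the same rank (the number of monomials of that degree in the given generators), so a surjection between them is an isomorphism. Hence $\phi$ is an isomorphism and $A \cong \mathbb{Z}_p[b_i]$.

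The only delicate point is the very first step: exhibiting the integral Hurewicz lift $\tilde a_i \in \pi_* MSU$ guaranteed by ``mod $p$ spherical''. In the $K(1)$-local setting at $p = 2$ this is supplied by the resolution $L_{K(1)} S^0 \to KO \xrightarrow{\psi^3 - 1} KO$ and its associated long exact sequence: the obstruction to lifting a $\psi^3$-fixed class in $K_*MSU$ modulo $p$ lives in the cokernel of $\psi^3 - 1$, and the combined hypotheses (mod $p$ sphericality together with $\psi^3 a_i \equiv a_i \pmod p$) are exactly what kill this Bockstein-type obstruction. Once the lifts $\tilde a_i$ are chosen, the remainder is routine Nakayama bookkeeping.
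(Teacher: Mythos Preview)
Your argument is circular at the step you yourself flag as delicate. In the paper's usage, ``mod $p$ spherical'' is not a hypothesis that an integral Hurewicz preimage exists; it is simply the condition $\psi^3 a_i \equiv a_i \pmod p$, which was verified by direct computation of Adams operations on $K_*MSU$, with no reference to $\pi_*MSU$. So you cannot take the existence of $\tilde a_i$ ``by definition''. Via the fiber sequence (which, incidentally, involves $KO$ rather than $K$), a class lifts to $\pi_*MSU$ precisely when it lies in $\ker(\psi^3-1)$ \emph{integrally}; hence producing $\tilde a_i$ with $H(\tilde a_i)\equiv a_i\pmod p$ is exactly the same problem as producing $b_i\equiv a_i\pmod p$ with $\psi^3 b_i=b_i$ --- which is the proposition itself. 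Your final paragraph correctly identifies the obstruction as the class of $a':=(\psi^3-1)a_i/p$ in $\operatorname{coker}(\psi^3-1)$, but the sentence ``the combined hypotheses \dots\ are exactly what kill this Bockstein-type obstruction'' is not an argument: the two hypotheses you list are the same hypothesis, and you give no reason $a'$ lies in $\operatorname{im}(\psi^3-1)$.

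The paper's proof does not pass through $\pi_*MSU$ at all. It is a purely algebraic bootstrap: since $\psi^3$ is a ring map and $\psi^3 a_j\equiv a_j\pmod p$ for every generator, one has $(\psi^3-1)A\subset pA$, hence $(\psi^3-1)^nA\subset p^nA$; the operator $\psi^3-1$ is $p$-adically topologically nilpotent, and one builds $b_i$ as a $p$-adic limit of successive corrections of $a_i$. Your Nakayama-style finishing step (surjectivity from the algebraic lemma plus a rank count for injectivity) is correct and is indeed what one does once the fixed $b_i$ are in hand; but the construction of the $b_i$ is the entire content, and you have deferred it rather than supplied it.
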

\begin{proof}
We make use of the bootstrap method: Assume $\psi^3a_i=a_i+pa'$ with $a'=\sum c_j a_j$. Then we have
$$ \psi^3 pa'= p\psi^3 a'= p\psi^3 (\sum c_i a_i)= p (\sum c_i \psi^3 a_i) + p^2 a'',$$
and go on the same way.
\end{proof}

\subsection{Umbral calculus}
\subsubsection{Mahler series in $p$-adic analysis}
The binomial polynomials define continuous functions
$$ \binom{\cdot}{k}: \mathbb{Z}_p\rightarrow \mathbb{Z}_p, \qquad x\mapsto \binom{x}{k}.$$
Since $\mathbb{N}$ is dense in $\mathbb{Z}_p$, we have $\left\| \binom{\cdot}{k} \right\| =\sup_{\mathbb{N}} \left| \binom{n}{k} \right| \le 1$. Because of $\binom{k}{k}=1$, equality holds in fact.
In $p$-adic analysis we know that for a given sequence $(a_i)_{i\ge 0}$ in $\mathbb{C}_p$ with $|a_i|\rightarrow 0$, the series $\sum_{k\ge 0} a_k \binom{\cdot}{k}$ is a continuous function $f:\mathbb{Z}_p\rightarrow \mathbb{C}_p$. It is quite remarkable that conversely, every continuous function $\mathbb{Z}_p\rightarrow \mathbb{C}_p$ can be represented this way. This result has been obtained by Mahler.
\begin{definition}
A Mahler series is a series $\sum_{k\ge 0} a_k \binom{\cdot}{k}$ with coefficients $|a_k|\rightarrow 0$ in $\mathbb{C}_p$.
\end{definition}
With the notation of the norm $\left\| f \right\|=\sup_{\mathbb{Z}_p} |f(x)|$ and the finite-difference operator $\nabla$
$$ (\nabla f)(x)=f(x+1)-f(x),$$
and its k-fold iterated version $\nabla^k$, we have:
\begin{theorem}[Mahler]
Let $f:\mathbb{Z}_p\rightarrow \mathbb{C}_p$ be a continuous function and put $a_k= \nabla^k f(0)$. Then $|a_k| \rightarrow 0$, and the series $\sum_{k\ge 0} a_k \binom{\cdot}{k}$ converges uniformly to $f$. Moreover $\left\| f \right\|=\sup_{k\ge 0} |a_k|$.
\end{theorem}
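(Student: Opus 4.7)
The plan is to prove Mahler's theorem in three stages: first extract the combinatorial/algebraic identities, then prove the decay $|a_k|\to 0$ (which is the heart of the matter), and finally deduce uniform convergence and the norm equality.

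First, I would unfold $\nabla^k f(0)=\sum_{j=0}^{k}(-1)^{k-j}\binom{k}{j}f(j)$, so immediately $|a_k|\le\sup_{0\le j\le k}|f(j)|\le\|f\|$, giving one half of the norm estimate. Next I would verify the Newton interpolation identity
$$f(n)=\sum_{k=0}^{n}a_k\binom{n}{k}\qquad\text{for every }n\in\mathbb{N},$$
by induction on $n$ (since $\binom{n}{k}=0$ for $k>n$, this is a finite sum, and applying $\nabla$ in $n$ peels off the top term). Consequently, once $|a_k|\to 0$ is known, the series $\sum_k a_k\binom{x}{k}$ converges uniformly in $x\in\mathbb{Z}_p$ (because $|\binom{x}{k}|_p\le 1$) to a continuous function that agrees with $f$ on $\mathbb{N}$, hence on all of $\mathbb{Z}_p$ by density.

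The main obstacle is showing $|a_k|_p\to 0$. The key observation is that, by the non-archimedean triangle inequality applied to $(\nabla g)(x)=g(x+1)-g(x)$, one has $\|\nabla g\|\le\|g\|$, so the sequence $(\|\nabla^k f\|)_k$ is non-increasing; it therefore suffices to prove $\|\nabla^{p^n}f\|\to 0$ along the subsequence $k=p^n$. Here I would exploit the classical divisibility $p\mid\binom{p^n}{j}$ for $0<j<p^n$: writing
$$\nabla^{p^n}f(x)=f(x+p^n)-f(x)+\sum_{j=1}^{p^n-1}(-1)^{p^n-j}\binom{p^n}{j}f(x+j),$$
the sum on the right has norm at most $|p|_p\cdot\|f\|$, while uniform continuity of $f$ on the compact set $\mathbb{Z}_p$ forces $|f(x+p^n)-f(x)|_p\le\omega_f(p^{-n})\to 0$. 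Hence
$$\|\nabla^{p^n}f\|\le\max\bigl(\omega_f(p^{-n}),\,|p|_p\|f\|\bigr),$$
and iterating this estimate (applying it to $\nabla^{p^n}f$ in place of $f$) drives the right-hand side to zero. Monotonicity then gives $\|\nabla^kf\|\to 0$, i.e.\ $|a_k|_p\to 0$.

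Finally, for the norm equality, the bound $|a_k|_p\le\|f\|$ from step one gives $\sup_k|a_k|_p\le\|f\|$. Conversely, for every $x\in\mathbb{Z}_p$ the non-archimedean estimate $|f(x)|_p=\bigl|\sum_k a_k\binom{x}{k}\bigr|_p\le\sup_k|a_k|_p$ (using $|\binom{x}{k}|_p\le 1$) yields $\|f\|\le\sup_k|a_k|_p$, closing the loop. I expect the uniform-continuity/divisibility step for $\nabla^{p^n}$ to be the delicate point; everything else is bookkeeping with the non-archimedean triangle inequality.
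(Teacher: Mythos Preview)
The paper does not prove Mahler's theorem: it is stated as a classical result (with an implicit reference to standard $p$-adic analysis texts such as Robert's \emph{A Course in $p$-adic Analysis}) and then used as input for the later discussion of numerical polynomials and Adams operations. So there is no ``paper's own proof'' to compare against.

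That said, your proposal is the standard proof and is essentially correct. Two small points deserve mention. First, the expression $\nabla^{p^n}f(x)=f(x+p^n)-f(x)+\sum_{j=1}^{p^n-1}(-1)^{p^n-j}\binom{p^n}{j}f(x+j)$ is literally correct only for $p$ odd; when $p=2$ the endpoint contribution is $f(x+2^n)+f(x)$. This does not matter for the estimate, since $f(x+2^n)+f(x)=\bigl(f(x+2^n)-f(x)\bigr)+2f(x)$ and $|2f(x)|_2\le |2|_2\|f\|$ is absorbed into the $|p|_p\|f\|$ term. Second, to make the iteration ``applying it to $\nabla^{p^n}f$ in place of $f$'' actually work, you need to control the modulus of continuity of $\nabla^{p^n}f$; the missing observation is that $\omega_{\nabla^m f}\le\omega_f$ for every $m$ (immediate from the finite-difference formula and the ultrametric inequality). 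With this in hand, fixing $\epsilon>0$, choosing $N$ with $\omega_f(p^{-N})<\epsilon$, and iterating gives $\|\nabla^{mp^N}f\|\le\max(\epsilon,\,p^{-m}\|f\|)$, which is $\le\epsilon$ for $m$ large; monotonicity then finishes the job. Once you insert that one line, the argument is complete.
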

\subsubsection{The ring of numerical polynomials}
Let $A$ denote the ring 
$$A:= \{f\in\mathbb{Q}[\omega]\text{ such that } f(\mathbb{Z})\subset\mathbb{Z}\},$$
which we call the ring of numerical poynomials. 
\begin{remark}
This ring has been studied for a long time - historically Pascal considered elements $\binom{w}{i}=\frac{w(w-1)...(w-i+1)}{i!}$ and Fermat studied $\frac{w^p-w}{p}$ for $p$ a prime. In fact Newton found out that $1,w,\binom{w}{2},\binom{w}{3},...$ are a basis for $A$. 
\end{remark}
In $p$-adic analysis the $p$-completion of $A$, i.e.\
$$ \hat{A}_p=\{f\in \mathbb{Q}_p[\![\omega]\!]: f(\mathbb{Z}_p)\subset \mathbb{Z}_p\} $$
is, by a theorem of Mahler, the ring of continuous functions $f:\mathbb{Z}_p\rightarrow \mathbb{Z}_p$, and its elements can be written as Mahler series
$$ f(\omega)=\sum_{i=0}^{\infty} a_i \binom{\omega}{i} \quad\text{with}\quad a_i\rightarrow 0.$$
Integrally we can identify $K_0\mathbb{CP}^{\infty}\cong A$, i.e.\ the $K$-homology of $\mathbb{CP}^{\infty}$ equals the ring of numerical polynomials. The duality 
$$K^0\mathbb{CP}^{\infty}\cong \Hom(K_0\mathbb{CP}^{\infty},\mathbb{Z})$$
is given as follows: The series $\sum a_i t^i\in \mathbb{Z}[\![t]\!]\cong K^0\mathbb{CP}^{\infty}$ maps to the homomorphism given by $\binom{w}{i}\mapsto a_i$ on basis elements.

\subsubsection{Alternative description of the Adams operations}
We have seen that $K_*\mathbb{CP}^{\infty}$ is the ring of continuous functions on $\mathbb{Z}_p$ which are given as Mahler series. Its module generators $\beta_i\in K_{2i}\mathbb{CP}^{\infty}$ represent the function $\beta_i(T)=\binom{T}{i}$. Application of a base change leads to an interesting observation: At the prime 2 we have:
\begin{eqnarray*}
\binom{3T}{1} & = & 3\binom{T}{1}\\
\binom{3T}{2} & = & 9\binom{T}{2}+3\binom{T}{1}\\
\binom{3T}{3} & = & 27\binom{T}{3}+18\binom{T}{2}+\binom{T}{1}\\
\binom{3T}{4} & = & 81\binom{T}{4}+81\binom{T}{3}+15\binom{T}{2}\\
\binom{3T}{5} & = & 243\binom{T}{5}+324\binom{T}{4}+108\binom{T}{3}+6\binom{T}{2}\\
\binom{3T}{6} & = & 729\binom{T}{6}+1215\binom{T}{5}+594\binom{T}{4}+81\binom{T}{3}+\binom{T}{2}
\end{eqnarray*}
and this is exactly the Adams operation $\psi^{3^{-1}}$ on $\beta_k$ with respect to the generator $x=L-1\in K^*\mathbb{CP}^{\infty}$. The generator used before results in the same operation up to an alternating sign. 
\begin{lemma} The Adams operation $\psi^{3^{-1}}$ on $K_*\mathbb{CP}^{\infty}$ is given by
$$ \psi^{3^{-1}} \beta_i(T)=\beta_i(3T),$$
or, equivalently as the Mahler series
$$ \psi^{3^{-1}} \binom{T}{i}=\binom{3T}{i}=\sum_{j\ge 1} a_j \binom{T}{j},$$
where
$$ a_j=\sum_{s+t=i-j} \binom{j}{s}\binom{s}{t} 3^{j-t}.$$
\end{lemma}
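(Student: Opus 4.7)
The plan is to prove the identity by transporting the Adams action on the cohomology side $K^0\mathbb{CP}^\infty\cong\mathbb{Z}_p[\![x]\!]$ to its adjoint on the homology side $K_0\mathbb{CP}^\infty\cong\hat A_p$ via the Kronecker pairing, and then matching generating functions in the Mahler basis. First I would fix the orientation $x=L-1$, so that $\psi^k(L)=L^k$ immediately gives $\psi^k(x)=(1+x)^k-1$, and hence $\psi^k$ acts on $\mathbb{Z}_p[\![x]\!]$ by the power-series substitution $f(x)\mapsto f((1+x)^k-1)$.

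Next I would record the key generating-function identity for the Mahler basis. Since $(1+y)^T=\sum_{i\ge 0}\binom{T}{i}y^i$ as continuous functions of $T\in\mathbb{Z}_p$, we have
$$\sum_{i\ge 0}\binom{kT}{i}y^i \;=\; (1+y)^{kT} \;=\; \bigl(1+((1+y)^k-1)\bigr)^T \;=\; \sum_{j\ge 0}\binom{T}{j}\bigl((1+y)^k-1\bigr)^j.$$
Reading off the coefficient of $y^i$ yields the Mahler expansion
$$\binom{kT}{i} \;=\; \sum_{j\ge 0}\Bigl(\,[y^i]\bigl((1+y)^k-1\bigr)^j\Bigr)\,\binom{T}{j}.$$

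On the topological side, the Kronecker-pairing adjointness $\langle\psi^k a,b\rangle=\langle a,\psi^{k^{-1}} b\rangle$ (established earlier in the paper) together with basis expansion gives
$$\psi^{k^{-1}}\beta_i \;=\; \sum_j \langle x^j,\psi^{k^{-1}}\beta_i\rangle\,\beta_j \;=\; \sum_j \langle \psi^k x^j,\beta_i\rangle\,\beta_j,$$
and $\langle\psi^k x^j,\beta_i\rangle$ is precisely $[x^i]\bigl((1+x)^k-1\bigr)^j$. Comparing this expansion with the generating-function identity under the correspondence $\beta_j\leftrightarrow\binom{T}{j}$ concludes that $\psi^{k^{-1}}\beta_i(T)=\binom{kT}{i}$. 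Specialising to $k=3$ yields the first displayed formula of the lemma.

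For the explicit Mahler coefficients $a_j$, I would simply factor and expand
$$\bigl((1+y)^3-1\bigr)^j \;=\; (3y+3y^2+y^3)^j \;=\; y^j\bigl(3+y(3+y)\bigr)^j$$
and apply the binomial theorem twice to obtain $\displaystyle y^j\sum_{s=0}^{j}\sum_{t=0}^{s}\binom{j}{s}\binom{s}{t}3^{j-t}y^{s+t}$, whence $a_j=[y^i](3y+3y^2+y^3)^j=\sum_{s+t=i-j}\binom{j}{s}\binom{s}{t}3^{j-t}$, as claimed. The only real subtlety---not an obstacle but a bookkeeping point---is reconciling this with the earlier explicit tables: those used the orientation $x=1-L$, for which $\psi^k(x)=1-(1-x)^k$ and one picks up an alternating sign $(-1)^{i-j}$; with the orientation $x=L-1$ natural to the Mahler picture, this sign disappears, which is exactly the alternating sign the lemma alludes to.
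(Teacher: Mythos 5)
Your argument is correct, and it reaches the lemma by a route that differs from the paper's in the key computational step. The paper computes the Mahler coefficients of $\binom{3T}{i}$ by invoking Mahler's theorem, $a_j=\nabla^j\binom{3T}{i}\big|_{T=0}$, then uses the Vandermonde recursion $\nabla\binom{3T}{i}=3\binom{3T}{i-1}+3\binom{3T}{i-2}+\binom{3T}{i-3}$ and the observation (left somewhat informal there) that iterating $\nabla$ reproduces taking powers of $\psi^3x=3x+3x^2+x^3$, thereby matching the coefficients already obtained from the Kronecker pairing. You instead bypass Mahler's theorem entirely: the substitution identity $(1+y)^{kT}=\bigl(1+((1+y)^k-1)\bigr)^T$ yields the expansion $\binom{kT}{i}=\sum_j\bigl([y^i]((1+y)^k-1)^j\bigr)\binom{T}{j}$ directly (a polynomial identity in $T$, checked on $\mathbb{N}$ and extended by density), and these coefficients visibly coincide with $\langle\psi^kx^j,\beta_i\rangle$ for the orientation $x=L-1$, so the two expansions match under $\beta_j\leftrightarrow\binom{T}{j}$. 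What your version buys: it makes precise the step the paper only gestures at ("the $j$-th power is the $j$-th iterated difference"), it works uniformly for any positive integer $k$ rather than being tied to $k=3$, and it re-derives the explicit formula $a_j=\sum_{s+t=i-j}\binom{j}{s}\binom{s}{t}3^{j-t}$ on the spot instead of importing it from the earlier $K^*\mathbb{CP}^{\infty}$ computation done with the other generator; you also handle correctly the orientation bookkeeping ($x=L-1$ versus $x=1-L$, whence the alternating sign $(-1)^{i-j}$) that the paper alludes to just before the lemma. What the paper's route buys is a conceptual link to the finite-difference operator and Mahler's theorem, which fits its surrounding discussion of umbral calculus, at the cost of a less self-contained verification.
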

\begin{proof}
Due to Mahler's theorem, the $j^{th}$ coefficient satisfies
$$ a_j=\nabla^j \binom{3x}{i}\big|_{x=0}, $$
i.e.\ it can be expressed using the $j$-fold iterated finite difference operator. Starting the calculations we get
$$ \nabla \binom{3T}{i}= 3\binom{3T}{i-1}+3\binom{3T}{i-2}+\binom{3T}{i-3}. $$
Comparing this to the calculation of the Adams operation in $K^*\mathbb{CP}^{\infty}$ with respect to the generator $x=L-1$ we get
$$ \psi^3 x= 3x+3x^2+x^3 $$
and see that taking the $j^{th}$ power of $3x+3x^2+x^3$ is exactly the same as taking the $j^{th}$ iterated finite difference operator $\nabla^j$. Hence the calculations coincide and the claim follows.
\end{proof}

\begin{corollary}
For a 2-adic unit $k\in\mathbb{Z}^{\times}_2$ we have 
$$\left(\psi^{k^{-1}} \beta_i\right)(T) =\beta_i(kT)=\binom{kT}{i}.$$
\end{corollary}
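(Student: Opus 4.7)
The plan is to extend the proof of the preceding lemma, which handled the case $k=3$ directly, to an arbitrary 2-adic unit $k \in \mathbb{Z}_2^{\times}$. The entire argument rests on one basic input: for the complex orientation $x = L-1$ of 2-complete $K$-theory, the identity $\psi^k L = L^k$ translates into $\psi^k x = (1+x)^k - 1$. For $k \in \mathbb{Z}_2^{\times}$ this is to be interpreted via the 2-adic binomial series $(1+x)^k = \sum_{j \ge 0} \binom{k}{j} x^j$, which is a bona fide element of $K^*\mathbb{CP}^{\infty} \cong \mathbb{Z}_2[\![x]\!]$ since the coefficients $\binom{k}{j} = k(k-1)\cdots(k-j+1)/j!$ lie in $\mathbb{Z}_2$ (by Mahler's theorem applied to $w \mapsto \binom{w}{j}$).

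With this in hand I would proceed in two steps. First, use the adjointness of the Kronecker pairing (established in the lemma preceding Section 4.2.2) to expand
\[
\psi^{k^{-1}} \beta_i = \sum_{j \ge 0} \langle \psi^k x^j, \beta_i \rangle\, \beta_j,
\]
so that the $j^{\rm th}$ coefficient of $\psi^{k^{-1}}\beta_i$ equals the coefficient of $x^i$ in $\bigl((1+x)^k - 1\bigr)^j$. Second, compute the Mahler expansion of $T \mapsto \binom{kT}{i}$: since $\binom{kT}{i}$ is by definition the coefficient of $x^i$ in $(1+x)^{kT}$, the rewriting
\[
(1+x)^{kT} \;=\; \bigl(1 + ((1+x)^k - 1)\bigr)^T \;=\; \sum_{j \ge 0} \binom{T}{j}\bigl((1+x)^k - 1\bigr)^j
\]
shows that the coefficient of $\binom{T}{j}$ in the Mahler series of $\binom{kT}{i}$ is again the coefficient of $x^i$ in $\bigl((1+x)^k - 1\bigr)^j$. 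The two expansions therefore match term by term, giving $\psi^{k^{-1}}\beta_i(T) = \binom{kT}{i}$.

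The main obstacle is bookkeeping rather than substance: one must verify that the intermediate formal manipulations are legitimate in $\mathbb{Z}_2[\![x]\!]$. The point to check is that $y := (1+x)^k - 1$ has zero constant term, so $y^j$ begins in degree $j$, and hence $\sum_j \binom{T}{j} y^j$ converges $x$-adically to a well-defined element of $\mathbb{Z}_2[\![x]\!]$ for each $T \in \mathbb{Z}_2$; likewise $(1+x)^{kT} = ((1+x)^k)^T$ as $x$-adic power series. Granted this, the argument of the $k=3$ lemma applies verbatim with $3x+3x^2+x^3$ replaced by the 2-adic series $\sum_{j \ge 1}\binom{k}{j} x^j$, and the corollary follows.
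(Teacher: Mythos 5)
Your proof is correct, but it takes a genuinely different route from the paper's. The paper argues by approximation: it uses that $3$ is a topological generator (of the relevant quotient of $\mathbb{Z}_2^{\times}$), writes $k$ as a limit of powers of $3$, iterates the preceding lemma in the form $\psi^{3^{-a}}\beta_i(T)=\beta_i(3^aT)$, and passes to the limit using continuity of $k\mapsto\psi^k$ and of $\beta_i$. You instead redo the Mahler-series computation directly for an arbitrary unit $k$: taking as input that $\psi^k$ acts on $K^0\mathbb{CP}^{\infty}\cong\mathbb{Z}_2[\![x]\!]$ (with $x=L-1$) by $x\mapsto(1+x)^k-1$ with $2$-adic binomial coefficients, you match the expansion $\psi^{k^{-1}}\beta_i=\sum_j\langle\psi^kx^j,\beta_i\rangle\beta_j$ against the identity $(1+x)^{kT}=\sum_j\binom{T}{j}\bigl((1+x)^k-1\bigr)^j$, and the convergence points you flag (zero constant term of $(1+x)^k-1$, and $((1+x)^k)^T=(1+x)^{kT}$ by density of $\mathbb{N}$ in $\mathbb{Z}_2$ and continuity of the coefficients) are exactly the right ones. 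What each approach buys: the paper's proof is shorter and reuses the $k=3$ lemma as a black box, but it depends on the density of $\{3^n\}$ in $\mathbb{Z}_2^{\times}$, which is only true up to sign ($3$ generates $\mathbb{Z}_2^{\times}/\{\pm1\}$; the closure of $\langle 3\rangle$ is the index-two subgroup of units congruent to $1$ or $3$ mod $8$), so strictly it needs $\psi^{-1}$ as an extra input for the remaining units; your computation treats every $k\in\mathbb{Z}_2^{\times}$ uniformly and avoids that issue. The trade-off is that your single input, $\psi^kx=(1+x)^k-1$ for non-integral $k$, is not the literal statement $\psi^kL=L^k$ but its $2$-adically extended form, and proving it from the construction of $\psi^k$ on $2$-complete $K$-theory is itself a continuity argument of the kind the paper makes explicit; cite it as a standard property of the $\mathbb{Z}_2^{\times}$-action on $K\mathbb{Z}_2$ (e.g.\ Hopkins' $K(1)$-local notes) and your argument is complete.
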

\begin{proof}
Since $3$ is a topological generator of $\mathbb{Z}_2^{\times}$, the sequence $a_n=3^n$ contains a subsequence $(a_{i_n})_n$ converging to $k$, and we have
\begin{eqnarray*}
\psi^{k^{-1}} \beta_i (T) &=& \lim_n \psi^{a_{i_n}^{-1}} \beta_i(T)=\lim_n \psi^{3^{-a_{i_n}}} \beta_i (T)\\
 &=& \lim_n \psi^{3^{-1}}\cdots \psi^{3^{-1}} \beta_i(T) = \lim_n \beta_i (3^{a_{i_n}} T) \\
 &=& \beta_i (kT).
\end{eqnarray*}
\end{proof}

\section{Open questions and concluding remarks}
Working towards a full $E_{\infty}$ splitting of the $K(1)$-local bordism spectrum $MSU$ one has to know that the quotient
$ M{\rm SU}/T_{\zeta}$
is free in the $K(1)$-local stable homotopy category. This implies a basis corresponding to the spherical classes which is geometrically realized as free $E_{\infty}$ summands $TS^0$. I conjecture that $MSU$ splits at $p=2$ as
$ M{\rm SU} \cong T_{\zeta} \wedge \bigwedge_{i=1}^{\infty} TS^0 ,$
which looks very similar to the $E_{\infty}$ splitting of $M{\rm Spin}$. Maybe the $E_{\infty}$ comparison map
$ M{\rm SU} \rightarrow M{\rm Spin}$
is close to an $E_{\infty}$ equivalence. An indication for this is the vanishing of $MSU_6$ after $K(1)$-localization. Integrally this is false, because the comparison map is neither injective nor surjective on the level of homotopy. To give examples we mention that $\mathbb{HP}^2$ is not a complex manifold, but it is spin, thus the comparison map is not surjective. 
While the 3-connected manifold $\mathbb{HP}^2$ represents a non-trivial spin bordism class, it does not admit a stably complex structure (since its signature is odd, cf.\ \cite{CF66a}). On the other hand we have $M{\rm Spin}_6=0$, but by construction the projective variety 
$$ \mathcal{K4}=\{ z\in\mathbb{CP}^4 | z_0^5+z_1^5+z_2^5+z_3^5+z_4^5 = 0\} $$
has vanishing first Chern class and represents a non-zero class in $M{\rm SU}_6$. Recall from \cite{ABP66} that an $SU$-manifold is null bordant if and only if its Chern numbers and $KO$-characteristic numbers vanish.\\

\begin{center}
\begin{tabular}{|r||r|r|r|r|r|r|r|r|r|}
\hline
$n$        &                   0 &                1 &               2 &                3 &  4  &     5  &            6   &            7  &       8 \\
\hline 
$\pi_n^s$&  $\mathbb{Z}_{(2)}$ & $\mathbb{Z}/2$  & $\mathbb{Z}/2$  & $\mathbb{Z}/8$  & $0$ & $0$  & $\mathbb{Z}/2$ &$\mathbb{Z}/16$&$\mathbb{Z}/2\oplus\mathbb{Z}/2$ \\
$KO_n$ &  $\mathbb{Z}_{(2)}$ & $\mathbb{Z}/2$  & $\mathbb{Z}/2$  & $0$ & $\mathbb{Z}_{(2)}$ & $0$ & $0$ & $0$ &  $\mathbb{Z}_{(2)}$ \\
$M{\rm SU}_n$  & $\mathbb{Z}_{(2)}$ & $\mathbb{Z}/2$  & $\mathbb{Z}/2$  & $0$ & $\mathbb{Z}_{(2)}$ & $0$ & $\mathbb{Z}_{(2)}$ &$0$ & $\mathbb{Z}_{(2)}\oplus \mathbb{Z}_{(2)}$\\
$M{\rm Spin}_n$ & $\mathbb{Z}_{(2)}$ & $\mathbb{Z}/2$  & $\mathbb{Z}/2$  & $0$ & $\mathbb{Z}_{(2)}$ & $0$ & $0$ &$0$ & $\mathbb{Z}_{(2)}\oplus \mathbb{Z}_{(2)}$\\
\hline
\end{tabular}\\[3mm]
The $2$-primary part of some relevant homotopy groups\\[3mm]
\end{center}
Desiring progress in a full $SU$-splitting, one has to get a better understanding of the spherical classes. One approach is to apply better arithmetic techniques, another approach is to interpret the Adams operations as a precomposition of automorphisms as in the example of $K_*\mathbb{CP}^{\infty}$. Again another access is the study of symmetric $2$-cocycles in the sense of \cite{Laures02} and \cite{AndoHopkinsStrickland}.
This is an interesting arithmetic problem and it poses quite a challenge to calculate the corresponding spherical classes.\\[3mm]
Another problem which is not solved yet is a $K(1)$-local additive decomposition of $M\rm{SU}$ in terms of $K$-theory. In \cite{Pengelley} Pengelley gives a 2-local additive splitting of $MSU$
$$ MSU_{(2)}\cong \bigvee_i \Sigma^{d_i} BoP \vee \bigvee_j \Sigma^{d_j'} BP $$
into a wedge of suspensions of Brown-Peterson spectra $BP$ and a wedge of suspensions of other indecomposable spectra $BoP$, which bear similarities to the $BP$ spectrum and connective $K$-theory $ko$. In \cite{Hovey97} Marc Hovey conjectures that the spectrum $BoP$ splits into a wedge of suspensions of $K$-theory spectra $K$ and $KO$, so that in this case $MSU$ splits additively like
$$ MSU \cong \bigvee K \vee \bigvee KO.$$
Using results from \cite{Hopkins98}, more specifically that
$ \pi_*K\wedge K \cong \Hom_{cts}(\mathbb{Z}_2^{\times},\pi_*K)$
and
$ \pi_*K\wedge KO \cong \Hom_{cts}(\mathbb{Z}_2^{\times}/\{\pm 1\}, \pi_* K),$
we see that
\begin{eqnarray*}
K_*MSU & \cong & \bigoplus K_*K \oplus \bigoplus K_*KO\\
       & \cong & \bigoplus \Hom_{cts}(\mathbb{Z}_2^{\times},\pi_*K) \oplus \bigoplus \Hom_{cts}(\mathbb{Z}_2^{\times}/\{\pm 1\}, \pi_* K).
\end{eqnarray*}
It is highly desirable to get a precise additive splitting. Such a description would offer comforting methods to calculate Adams operations.

%
%

\vspace*{10mm}
\parbox{\linewidth}{
\textsc{Holger Reeker}\newline
\textsc{Ruhr-Universit\"at Bochum}\newline
\textsc{Universit\"atsstra{\ss}e 150}\newline
\textsc{44801 Bochum}\newline
\textsc{Germany}\newline
\phantom{ }\newline
\textsc{Email: }\texttt{holger.reeker@rub.de}\hfill}
\end{document}